\def \cC{\mathcal{C}}
\def \cB{\mathcal{B}}
\def \cS{\mathcal{S}}
\def \cF{\mathcal{F}}
\def \cG{\mathcal{G}}
\def \cI{\mathcal{I}}
\def \cJ{\mathcal{J}}
\def \cL{\mathcal{L}}
\def \cO{\mathcal{O}}
\def \cR{\mathcal R}
\def \P{\mathsf P}
\def \PP{\widetilde{\mathsf{P}}}
\def \E{\mathsf E}
\def \EE{\widetilde{\mathsf{E}}}
\def \N{\mathbb{N}}
\def \R{\mathbb{R}}
\def \ud{\mathrm{d}}
\def \e{\mathrm{e}}
\def \1{\mathds{1}}
\newcommand{\eps}{\varepsilon}
\newtheorem{theorem}{Theorem}[section]
\newtheorem{lemma}[theorem]{Lemma}
\newtheorem{proposition}[theorem]{Proposition}
\newtheorem{definition}[theorem]{Definition}
\newtheorem{remark}[theorem]{Remark}
\newtheorem{assumption}[theorem]{Assumption}
\title[Continuously differentiable optimal boundaries]{A probabilistic approach to continuous differentiability of optimal stopping boundaries}
\author[T.\ De Angelis]{Tiziano De Angelis}
\author[D.\ Lamberton]{Damien Lamberton}
\subjclass[2020]{60G40, 60J65, 60J60, 35R35, 80A22}
\keywords{optimal stopping, free boundary problems, regularity of free boundaries, Stefan problem, Pitman's theorem, Bessel processes}
\address{T.\ De Angelis: School of Management and Economics, Dept.\ ESOMAS, University of Torino, Corso Unione Sovietica, 218 Bis, 10134, Torino, Italy; Collegio Carlo Alberto, Piazza Arbarello 8, 10122, Torino, Italy.}
\email{\href{mailto:tiziano.deangelis@unito.it}{tiziano.deangelis@unito.it}}
\address{D.\ Lamberton: LAMA, Univ.\ Gustave Eiffel, Univ.\ Paris Est Creteil, CNRS, Projet Mathrisk INRIA, F-77454 Marne-la-Vall\'ee, France}
\email{\href{mailto:damien.lamberton@univ-eiffel.fr}{damien.lamberton@univ-eiffel.fr}}
\date{\today}
\numberwithin{equation}{section}
\begin{document}

\begin{abstract}
We obtain the first probabilistic proof of continuous differentiability of time-dependent optimal boundaries in optimal stopping problems. The underlying stochastic dynamics is a one-dimensional, time-inhomogeneous diffusion. The gain function is also time-inhomogeneous and not necessarily smooth. Moreover, we include time-and-state-dependent discount rate and the time-horizon can be either finite or infinite. Our arguments of  proof are of a local nature that allows us to obtain the result under more general conditions than those used in the PDE literature. As a byproduct of our main result we also obtain the first probabilistic proof of the link between the value function of an optimal stopping problem and the solution of the Stefan problem.
\end{abstract}

\maketitle

\section{Introduction}

In this paper we obtain the first probabilistic proof of continuous differentiability of free boundaries in time-inhomogenous optimal stopping problems (OSPs). More precisely, when the underlying process is a one-dimensional diffusion $\mathbb{X}:=(X_t)_{t\ge 0}$ and the free boundary is a function of time $t\mapsto b(t)$, we show under mild conditions that $\dot b=\frac{\ud b}{\ud t}$ is well-defined and continuous (except perhaps at the maturity of the OSP). Time dependent boundaries may arise in our setting because of finite-time horizon and/or time-dependence in both the gain function and the discount rate and/or time-inhomogeneous dynamics for $\mathbb X$. Thanks to the local nature of our arguments of proof, we are able to obtain continuous differentiability of $t\mapsto b(t)$ in broad generality, going beyond analogous results from PDE theory. The PDE results typically require smooth coefficients for the second order differential operator associated to the underlying diffusion $\mathbb X$, along with smoothness of the stopping payoff and/or other {\em ad-hoc} structural conditions on those ingredients (see, e.g., Friedman \cite{friedman1975parabolic} or Kinderlehrer and Stampacchia \cite[Chapter VIII]{kinderlehrer2000introduction}).

Our methodology is new in the literature and it is based on a combination of change of scale for the diffusion (due to Lamperti \cite{lamperti1964simple}), Girsanov theorem and, crucially, an application of Pitman's theorem for the law of the three-dimensional Bessel process (see, e.g., Revuz and Yor \cite[Thm.\ III.3.5]{revuz2013continuous}). The key technical step in our analysis is a delicate Taylor-like expansion for the time derivative $\dot v$ of the value function $v$ of the OSP, near the optimal boundary $b$. Such expansion yields existence of the mixed derivative $\dot v_x$ at points of the boundary (i.e., existence of $\dot v_x(t,b(t))$), along with a probabilistic representation thereof. Then we are able to express the time-derivative of $b(t)$ as $\dot b(t)=-\eta(t)\dot u_x(t,b(t))$, where $\eta$ is explicit and continuous, $u=v-g$ and $g$ is the stopping payoff in the OSP. Finally, using the probabilistic representation of $\dot v_x(t,b(t))$ we are able to prove continuity of the mapping $t\mapsto \dot b(t)$.

We emphasise that a study of properties of the mixed derivative $\dot v_x$ near the stopping boundary is a notoriously difficult task. In the PDE literature $\dot v_x$ is studied using Gevrey's lemma \cite{gevrey1918nature}, which offers no probabilistic analogue, whereas in the probabilistic literature no previous results on $\dot v_x(t,b(t))$ are known.   

As an interesting byproduct of our main result (Theorem \ref{thm:main}) we also obtain the first probabilistic proof of the link between the value function of an OSP and the solution of the Stefan problem (Theorem \ref{thm:stefan}). More precisely, we show that the pair $(\dot v, b)$ coincides with a solution pair of the one-sided Stefan problem. Our setup is sufficiently general to accommodate, e.g., the case of the classical American put and call option, for which precise links to Stefan problems appear to be missing from the literature. Notably, McConnell \cite{mcconnell1991two} studied Stefan problems associated to the Laplace operator with low regularity on the data at the free boundary, relating to the latent heat in the ice-melting problem. Instead, we consider more general parabolic operators and low regularity in the terminal condition of the (backward version of the) Stefan problem.

\subsection{A brief review of optimal stopping and free boundary problems}

The development of optimal stopping theory started in the 50's with work by Wald and Wolfowitz \cite{wald1950bayes} and by Snell \cite{snell1952applications}, and it was mainly motivated by statistical questions around sequential testing and quickest detection (see, Shiryaev's recent book \cite{shiryaev2019stochastic} for an overview of the field and historical remarks). Many authors contributed to a systematic development of optimal stopping theory both in Markovian and non-Markovian framework. Foundational results can be found in the monographs by Shiryaev \cite{shiryaev2007optimal} and El Karoui \cite{elk1981aspects} and a more modern account of the theory is contained in the book by Peskir and Shiryaev \cite{peskir2006optimal} and references therein. The probabilistic study of free boundary problems (FBPs) motivated by optimal stopping theory has a long tradition that can be traced back to early contributions by, e.g., McKean \cite{mckean1965free}, Kotlow \cite{kotlow1973free} and Van Moerbeke \cite{van1976optimal}. An interesting historical account of the initial connections between optimal stopping and analytical methods for FBPs can be found in the review by Van Moerbeke \cite{van1974optimal}. 

The popularity of optimal stopping rapidly increased in the 90's when applications in mathematical finance became one of the main drivers of the theory. A central application, which stimulated an enormous amount of research at the interface of probability and mathematical analysis, is the celebrated American option pricing problem (detailed expositions and historical remarks can be found for example in the books by Karatzas and Shreve \cite{karatzas1998methods} and Lamberton and Lapeyre \cite{lamberton2011introduction}). Nowadays, optimal stopping is a very lively branch of stochastic control theory and of applied probability, which expands in several directions (including, e.g., stochastic games, mean-field games, time-inconsistent problems) and finds a broad range of applications.

The regularity of the free boundary $b$ that splits the state-space into the so-called {\em continuation} and {\em stopping} sets is a central theoretical question in the context of optimal stopping. In the PDE literature this issue has been addressed extensively, often via links to the Stefan problem, and numerous results concerning infinite differentiability of the free boundary have been known for a long time (see, e.g., Cannon and Hill \cite{CH67}, Friedman \cite{friedman1975parabolic} and Schaeffer \cite{schaeffer1976new}; earlier work from the 60's is also accounted for in L.-S.\ Jiang \cite{jiang1982free}). However, those results find limited use in modern optimal stopping because of assumptions that are often too restrictive for applications. Notable exceptions are the work by Bayraktar and Xing \cite{bayraktar2009analysis}, who obtain infinite differentiability of the American put boundary in a particular jump diffusion model, and Laurence and Salsa \cite{laurence2009regularity} who obtain infinite differentiability of American options boundaries in multi-dimensional geometric Brownian motion models. Both papers rely upon PDE methods.

Somewhat surprisingly, and in part as a testament to the difficulty of the problem, results in the probabilistic literature are still extremely scarce. In the special case when the underlying process $\mathbb X$ is a Brownian motion, it is possible to use properties of the Gaussian transition density to deduce continuity and higher regularity of $b$ (see, e.g., Chen and Chadam \cite{chen2007mathematical}) following similar ideas as those in the PDE literature (cf., the monographs by Cannon \cite{cannon1984one} and Friedman \cite{friedman1975parabolic}). Those methods do not easily extend beyond the Gaussian framework. In more general situations, probabilistic methods have only been used to prove continuity and, in some cases, Lipschitz continuity of the boundary (cf.\ De Angelis \cite{de2015note}, De Angelis and Stabile \cite{de2019lipschitz} and Peskir \cite{peskir2019continuity} and references therein). Our work advances this strand of the literature and it bridges the gap between the probabilistic and PDE realms. As an auxiliary result, in Section \ref{sec:lip} we prove that optimal stopping boundaries are locally Lipschitz in finite-horizon problems for time-homogeneous diffusions and with time-homogeneous discounted payoff.

Part of our contribution concerns the link between optimal stopping and Stefan problems. Thus we now elaborate a little on this aspect of the paper in the broader context of the existing literature. The link between Stefan problems and obstacle problems was initially highlighted by Duvaut \cite{duvaut1973resolution} (cf.\ also Schatz \cite{schatz1969free}). A detailed proof entirely based on PDE methods can be found in \cite[Chapter VIII]{kinderlehrer2000introduction}. The line of argument typically goes as follows: (i) it is postulated that a solution pair $(w,\varphi)$ of the Stefan problem exists, where $\varphi(t)$ is the free boundary and $w(t,x)$ the solution of the Cauchy-Dirichlet problem associated to $\varphi$; (ii) a new function $\hat w$ is defined as the integral in time of $w$ and it is argued that $\hat w$ should be solution of an obstacle problem; (iii) the obstacle problem becomes the main object of study and it is shown by PDE methods that it admits a unique solution $\hat w$ with suitable regularity (in a Sobolev class); (iv) geometric properties of the contact set associated to $\hat w$ are obtained and, in particular, it is proven that there exists a continuously differentiable free boundary $\varphi$; (v) finally, taking the time derivative of $\hat w$ it is possible to show the existence of the solution $w$ to the Stefan problem, as initially postulated. A very important aspect of this procedure is that $\frac{\ud}{\ud t}\varphi(t)$ is proportional to the spatial derivative of $w$ (equivalently to the mixed derivative of $\hat w$ with respect to time and space). 

Since the solution $\hat w$ of the obstacle problem is naturally associated to the value function $v$ of an optimal stopping problem, it becomes apparent that a characterisation of the time derivative of the optimal stopping boundary requires knowledge of the mixed derivative $\dot v_x$. This has been precisely the missing piece in the puzzle so far in the probabilistic literature. The most recent results (cf.,\ De Angelis and Peskir \cite{de2020global}) concern continuous differentiability in time and space of the value function for quite general OSPs, but this involves no need to derive
precise estimates for $\dot v_x$. Continuity of $\dot v$ alone, on the
other hand, is insufficient to obtain the link with the Stefan
problem rigorously. In this paper we fill this gap and bring the
probabilistic approach on par with the analytical one.

\subsection{Structure of the paper} 
In Section \ref{sec:setting} we formulate the problem and we state our main result (Theorem \ref{thm:main}) concerning continuous differentiability of the optimal boundary under the assumption of its local Lipschitz continuity. Section \ref{sec:lip} illustrates simple sufficient conditions that imply local Lipschitz continuity of the optimal boundary. Finally, Section \ref{sec:outline} provides an outline of our strategy for the proof of Theorem \ref{thm:main}. Section \ref{sec:3} introduces the main ingredients for our line of argument (Lamperti transform, Girsanov theorem and Pitman's theorem) along with numerous preliminary estimates on the three-dimensional Bessel process and on some related stopping times. In Section \ref{sec:expansion} we perform our main technical steps and obtain a Taylor-like expansion of $\dot v$ near the optimal boundary $b$. That establishes existence of the mixed derivative $\dot v_x(t,b(t))$ (Theorem \ref{thm:expansion}). In Section \ref{sec:proofmain} we formally prove Theorem \ref{thm:main} leveraging on the result of Theorem \ref{thm:expansion}. Finally, in Section \ref{sec:OS-Stefan} we establish the link between optimal stopping and Stefan problem.

\section{Setting and main result}\label{sec:setting}

Let $(\Omega,\cF,\P)$ be a complete probability space equipped with a standard 1-dimensional Brownian motion $(B_t)_{t\ge 0}$. Our problem is set on a time-horizon $[0,T]$, where $T>0$ can be either finite or infinite. Let $\mathbb{X}:=(X_t)_{t\ge 0}$ be a 1-dimensional, diffusion on a possibly unbounded interval $\cI=(a,b)$. The dynamics of $\mathbb X$ is specified by the stochastic differential equation (SDE)
\begin{align*}
X_{s}=x+\int_t^s \mu(u,X_u)\ud u+\int_t^s\sigma(X_u)\ud B_u,\quad s\in[t,T],
\end{align*}
for any $t\in[0,T]$ and $x\in\cI$. When $T=\infty$ we assume $\mathbb X$ to satisfy the SDE above on the interval $[t,\infty)$. Assumptions on the coefficients $\mu:[0,T]\times\cI\to \R$ and $\sigma:\cI\to (0,\infty)$ will be provided later, and we do not need to specify the behaviour of $\mathbb X$ at the endpoints of the interval $\cI$, because we perform a local study in $\cI$ (cf.\ Assumption \ref{ass:main}). However, we always assume that $\mathbb X$ is the unique strong solution of the equation above. We will sometimes use the notation $(X^{t,x}_s)_{s\in[t,T]}$ to keep track of the initial condition of $\mathbb X$.

Given continuous functions $r,g:[0,T]\times\R\to \R$, we consider OSPs of the form 
\begin{align}\label{eq:v}
v(t,x)=\sup_{0\le \tau\le T-t}\E_{t,x} \big[\e^{-\int_0^\tau r(t+s,X_{t+s})\ud s}g(t+\tau, X_{t+\tau})\big],\quad (t,x)\in[0,T]\times\cI,
\end{align}
where $\E_{t,x}[\cdot]=\E[\cdot|X_t=x]$ and the supremum is taken over stopping times for the filtration of the process $(X_{t+s})_{s\in[0,T-t]}$. We are going to write $D_{t,\tau}\coloneqq \exp\big(-\int_0^\tau r(t+s,X_{t+s})\ud s\big)$ in order to shorten some of our expressions, so that 
\begin{align*}
v(t,x)=\sup_{0\le \tau\le T-t}\E_{t,x} \big[D_{t,\tau}g(t+\tau, X_{t+\tau})\big],\quad (t,x)\in[0,T]\times\cI.
\end{align*}
The so-called continuation set is defined as 
\[
\cC\coloneqq\{(t,x)\in[0,T)\times\cI: v(t,x)>g(t,x)\}
\] 
and its complement is the stopping set $\cS\coloneqq \big([0,T]\times\cI\big)\setminus\cC$. The existence of an optimal stopping time in the form of an exit time of the process $(t+s,X_{t+s})$ from the set $\cC$ is guaranteed by standard theory (see, e.g., \cite[Appendix D]{karatzas1998methods}) if in addition to continuity of the process $s\mapsto D_{t,s}g(t+s,X_{t+s})$ we also assume
\[
\E_{t,x}\Big[\sup_{0\le s\le T-t}D_{t,s}\big|g(t+s,X_s)\big|\Big]<\infty.
\]

Since our focus is on the smoothness of the optimal (free) boundary we naturally assume that the problem above is well-posed and that an optimal boundary exists as a function of time. More precisely, we assume throughout that there exists a continuous function 
$b:[0,T]\to \cI$ such that the stopping time
\[
\tau_*^{t,x}\coloneqq\inf\{s\in[0,T-t):X^{t,x}_{t+s}\le b(t+s)\}\wedge (T-t),
\]
is optimal in problem \eqref{eq:v}. In this context we denote 
$\cC=\big\{(t,x)\in[0,T)\times\cI: x>b(t)\big\}$.
The function $v$ is assumed to be continuous so that $\cC$ is open and $\cS$ is closed (relatively to $[0,T]\!\times\!\cI$).

\begin{remark}\label{rem:loc}
Existence of an optimal boundary and, more broadly, the geometry of the continuation set is normally determined on a case-by-case basis, with few existing general results. In order for there to be a unique boundary $b(t)$ as above it is sufficient to show that $x\mapsto (v-g)(t,x)$ is non-decreasing. The latter task is often nontrivial but {\em ad-hoc} methods have been developed in the literature for specific problems. For example, in the American put problem the required monotonicity is proven by showing that $x\mapsto v(t,x)$ is convex and strictly positive with $v(t,0)=K$ (here $K>0$ is the strike price of the option; see, e.g., \cite{jacka1991optimal}). Many more examples are available in the literature (see, e.g., \cite{peskir2006optimal} for an overview) but a full review of existing results is outside the scope of the present paper, not least because
the nature of our assumptions and of our arguments of proof is completely {\em local}. Therefore we may as well assume that there are multiple separated optimal boundaries, e.g., $t\mapsto b_1(t)$ and $t\mapsto b_2(t)$ with $b_1(t) < b_2(t)$ for $t\in[0,T)$ and $\cC=\big\{(t,x)\in[0,T)\times\cI: b_1(t)<x<b_2(t)\big\}$. The choice of a single boundary is purely motivated by clarity of exposition. 
\end{remark}

\begin{remark}
We are only concerned with portions of the optimal boundary that lie in the open interval $\cI$, i.e., $b(t)\in\cI$. Going beyond this setup would require to specify the boundary behaviour of $\mathbb X$ and it would lead to a much more complicated analysis. Moreover, we assume that $\sigma(x)>0$ for $x\in\cI$ and we will use this fact for a change of probability measure and for an application of a Lamperti change of scale (cf.\ \cite{lamperti1964simple}). That is also why we prefer to work with a time-homogeneous diffusion coefficient, in order to keep the exposition as simple as possible. In specific cases, our methods can be applied to time-dependent diffusion coefficients $\sigma(t,x)$ but at the cost of an additional term in the drift of the dynamics arising from the Lamperti change of scale (cf.\ \eqref{eq:lamperti}). In particular, the function $\gamma(t,y)$ in that dynamics would read
\[
\gamma(t,y)=\frac{\mu\big(t,f^{-1}(t,y)\big)}{\sigma \big(t,f^{-1}(t,y)\big)}-\tfrac12\partial_x\sigma\big(t,f^{-1}(t,y)\big)-\int_c^{f^{-1}(t,y)}\frac{\partial_t \sigma(t,z)}{\sigma^2(t,z)}\ud z,
\]
where $f(t,x)=\int^x_c 1/\sigma(t,z)\ud z$ for an arbitrary constant $c\in\cI$ and $f^{-1}(t,y)$ denotes the inverse function of $f(t,\cdot)$ for fixed $t\in[0,T]$.
\end{remark}
 
Given two random times $\sigma\le \tau$ we use notations $[\![\sigma,\tau]\!]$ and $(\!(\sigma,\tau)\!)$ for the closed and open stochastic interval, respectively. Given an open set $A\subset[0,T)\times\cI$, we use $\overline A$ to indicate its closure relatively to $[0,T]\times\cI$. We say that $f\in C^{j,k}(A)$, $j,k\in\N$, if $f:A\to \R$ is continuously differentiable $j$-times in $t$ and $k$-times in $x$ in the set $A$; moreover, if $f$ admits continuous extension of all of its derivatives to the closure of $A$ we denote it $f\in C^{j,k}(\overline A)$. We use the simplified notations $f\in C^j(A)$ and $f\in C^j(\overline A)$ to indicate $f\in C^{j,j}(A)$ and $f\in C^{j,j}(\overline A)$, respectively. We use $C^\infty_b(A)$ to indicate continuous bounded functions with infinitely many continuous bounded derivatives in all variables in $A$. For a function $f(t,x)$ we use $\dot f(t,x)$ to indicate its time derivative and $f_x(t,x)$ to indicate its spatial derivative. Moreover, given two metric spaces $M$ and $N$ we use the notation $f\in C(M;N)$ for a continuous function $f:M\to N$. Finally, the infinitesimal generator of the process $\mathbb X$ is denoted by $\cL$ and it is defined by its action on smooth functions $\varphi\in C^\infty_b([0,T]\times\cI)$ as 
\[
(\cL \varphi)(t,x):=\frac{\sigma^2(x)}{2}\varphi_{xx}(t,x)+\mu(t,x)\varphi_x(t,x).
\]
 
We make the following standing assumptions:
\begin{assumption}\label{ass:main}
For each $t_0\in[0,T)$ there exists an open rectangle $\cR=\cR(t_0)$ with closure $\overline\cR\subset [0,T)\times \cI$ and $(t_0,b(t_0))\in\cR$ such that:
\begin{itemize}
\item[(i)] The value function $v$ is continuously differentiable in time and space in $\overline\cR$ (i.e., also across the boundary $b$); 
\item[(ii)] The gain function $g$ belongs to $C^{1,2}(\overline \cR)$ with $\dot g\in C^{1,2}(\overline\cR)$ and 
\begin{align}\label{eq:h<0}
h(t,x)\coloneqq \dot g(t,x)+(\cL g)(t,x)-r(t,x)g(t,x)<0,\quad\text{for $(t,x)\in\overline\cR$};
\end{align}
\item[(iii)] In the set $\overline \cR$ the functions $\mu$ and $r$ are continuously differentiable with $\alpha$-H\"older continuous derivatives for some $\alpha\in(0,1)$; 
\item[(iv)] In the set $\overline\cR$ the function $\sigma$ is continuously differentiable\footnote{We slightly abuse notation and consider $\sigma$ as function of $(t,x)$ for the ease of exposition.} with $\sigma_x$ that is Lipschitz continuous and $\sigma(x)\ge c_\cR>0$ for $(t,x)\in\overline\cR$.
\end{itemize}
\end{assumption}

The above assumptions are in fact verified in a very large class of concrete problems. Sufficient conditions for continuous differentiability of the value function (i) are known in both the probabilistic literature (see \cite{de2020global}) and the PDE literature (e.g., \cite{bensoussan2011applications} or \cite{friedmanSDE} for classical references and \cite{pascucci2011pde} for a more modern account). Sufficient conditions that make results from the existing literature applicable are $g$, $\mu$, $\sigma$, $r$ bounded, continuously differentiable with bounded derivatives, $\sigma(x)>0$ and, for example, $b$ non-decreasing (cf.\ \eqref{eq:bmon} below). Those sufficient conditions are far from being necessary and the reader may refer to \cite{pascucci2011pde} for more precise PDE results. Also \cite{de2020global} offers more detailed probabilistic conditions and it proves, for example, that the value of the American put is continuously differentiable in time and space (although $g$ is not smooth and $\mu$ and $\sigma$ are unbounded). Local regularity of the gain function (ii) is not restrictive and it holds in all the main examples in the literature (even when $g$ is not smooth in the whole space as in the case of the American call/put options). We notice that the condition \eqref{eq:h<0} is also not restrictive because it is a standard result in optimal stopping that $h(t,x)>0\implies(t,x)\in\cC$ (i.e., it is never optimal to stop when the discounted gain process behaves as a submartingale). Likewise, local smoothness of the discount rate and of the coefficients in the SDE (iii)--(iv) is a common feature in optimal stopping. 

We now state the main result of the paper. Its proof will be obtained in a series of intermediate steps presented in the next sections. The formal proof is stated at the end of Section \ref{sec:proofmain}.
\begin{theorem}\label{thm:main}
Let $T_1\in(0,T)$. Under Assumption \ref{ass:main}, if $t\mapsto b(t)$ is Lipschitz continuous in $[0,T_1]$, then it is continuously differentiable on $[0,T_1)$. 
\end{theorem}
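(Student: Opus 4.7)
The plan is to derive an explicit formula for $\dot b(t)$ by differentiating the smooth-fit condition along the free boundary, and then to read off continuous differentiability from the (assumed) continuity in $t$ of $\dot v_x(t,b(t))$ provided by Theorem~\ref{thm:expansion}.

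First, since $b$ is Lipschitz on $[0,T_1]$, Rademacher's theorem implies that $\dot b(t)$ exists at a.e.\ $t\in(0,T_1)$. On the stopping set we have $v=g$, so value matching $v(t,b(t))=g(t,b(t))$ and smooth fit $v_x(t,b(t))=g_x(t,b(t))$ hold (these are standard consequences of Assumption~\ref{ass:main}(i)). Differentiating value matching along $b$ at a point where $\dot b$ exists, and using smooth fit, yields $\dot v(t,b(t))=\dot g(t,b(t))$. Next, $v$ solves $\dot v+\cL v -rv=0$ on $\cC$; letting $x\downarrow b(t)$ in this equation, using the $C^1$-regularity of $v$ up to the boundary from Assumption~\ref{ass:main}(i), and substituting $v=g$, $v_x=g_x$, $\dot v=\dot g$ on $\{(t,b(t))\}$, one obtains
\begin{equation*}
v_{xx}(t,b(t))-g_{xx}(t,b(t))=-\frac{2h(t,b(t))}{\sigma^2(b(t))}>0,
\end{equation*}
where the strict positivity is the content of Assumption~\ref{ass:main}(ii).

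Second, at any $t\in(0,T_1)$ where $\dot b(t)$ exists, differentiate the smooth-fit identity $v_x(t,b(t))=g_x(t,b(t))$ along the boundary, using the existence of $\dot v_x(t,b(t))$ granted by Theorem~\ref{thm:expansion} (and continuity of $g\in C^{1,2}$). This gives
\begin{equation*}
\dot v_x(t,b(t))+v_{xx}(t,b(t))\,\dot b(t)=\dot g_x(t,b(t))+g_{xx}(t,b(t))\,\dot b(t),
\end{equation*}
which, combined with the previous identity, yields
\begin{equation*}
\dot b(t)=\frac{\sigma^2(b(t))}{-2h(t,b(t))}\,\bigl(\dot g_x(t,b(t))-\dot v_x(t,b(t))\bigr)=-\eta(t)\,\dot u_x(t,b(t)),
\end{equation*}
with $\eta(t)\coloneqq \sigma^2(b(t))/(-2h(t,b(t)))$ and $u=v-g$. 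By Assumption~\ref{ass:main}, the map $t\mapsto \eta(t)$ is continuous on $[0,T_1]$ (since $b$, $\sigma$, $h$ are continuous and $h<0$), and $t\mapsto \dot g_x(t,b(t))$ is continuous as well.

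Finally, Theorem~\ref{thm:expansion} delivers more than the mere existence of $\dot v_x(t,b(t))$: it provides a probabilistic representation, obtained via the Lamperti change of variables, Girsanov's theorem and Pitman's representation of the $3$-dimensional Bessel process, which is manifestly continuous in $t$. Hence $t\mapsto \dot v_x(t,b(t))$ is continuous on $[0,T_1)$, and therefore so is the right-hand side $F(t)\coloneqq -\eta(t)\,\dot u_x(t,b(t))$. Since the Lipschitz function $b$ satisfies $b(t)=b(0)+\int_0^t\dot b(s)\,\ud s$ and $\dot b=F$ a.e.\ with $F$ continuous, the fundamental theorem of calculus gives $b\in C^1([0,T_1))$ with $\dot b=F$ everywhere, completing the proof. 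The genuine difficulty of the whole argument is concentrated in establishing Theorem~\ref{thm:expansion}; once that machinery is in place, the present theorem follows from the short chain of differentiations above.
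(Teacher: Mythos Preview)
Your overall strategy—derive a formula $\dot b(t)=-\eta(t)\,\dot u_x(t,b(t))$ with $\eta>0$ continuous and then read off $b\in C^1$ from continuity of the right-hand side—matches the paper's, and your identification $u_{xx}(t,b(t)+)=-2h(t,b(t))/\sigma^2(b(t))$ is correct. The derivation of the formula for $\dot b$, however, has a gap.

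You obtain the formula by differentiating the smooth-fit identity $v_x(t,b(t))=g_x(t,b(t))$ along the boundary and writing
\[
(v_x)_t(t,b(t))+v_{xx}(t,b(t))\,\dot b(t)=(g_x)_t(t,b(t))+g_{xx}(t,b(t))\,\dot b(t).
\]
This chain rule is not justified: Assumption~\ref{ass:main}(i) gives only $v\in C^1$, so $v_x$ is continuous but not known to be differentiable at boundary points. Theorem~\ref{thm:expansion} supplies the one-sided \emph{spatial} derivative $(\dot v)_x(t,b(t))=\lim_{h\downarrow 0}h^{-1}[\dot v(t,b(t)+h)-\dot v(t,b(t))]$, not the \emph{time} derivative $(v_x)_t(t,b(t))$ that the chain rule for $v_x$ requires; equality of mixed partials at the boundary is precisely what is unclear at this regularity. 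You are thus inserting a derivative whose existence you do have into a slot where a different, unavailable, derivative is needed.

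The paper avoids this by working strictly inside $\cC$: for small $\delta>0$ it defines level curves $b_\delta(t)$ via $u(t,b_\delta(t))=\delta$, which lie in the interior where $u\in C^{1,2}$ and the implicit function theorem legitimately gives $\dot b_\delta(t)=-\dot u(t,b_\delta(t))/u_x(t,b_\delta(t))$. Theorem~\ref{thm:expansion} controls the numerator as $\delta\downarrow 0$, while \eqref{eq:h} and the boundary conditions control the denominator through $u_{xx}$; integrating $\dot b_\delta$ and passing to the limit by dominated convergence yields the formula at every $t\in[0,T_1)$ (Proposition~\ref{prop:dotb}). No chain-rule manipulation of $v_x$ at the boundary is ever needed. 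One further remark: the continuity in $t$ of the probabilistic representation is not ``manifest''—the Girsanov exponent contains $\dot c(t+\cdot)$, which is a priori only in $L^\infty$—and the paper establishes it separately in Lemma~\ref{lem:lambdacont}.
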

On a technical level, the Lipschitz continuity of the optimal boundary is needed to perform a change of probability measure and Girsanov transformation that simplify the analysis (cf.\ Section \ref{sec:Girs}). Sufficient conditions for the Lipschitz property of the optimal boundary can be found for example in \cite[Sec.\ 4.1]{de2019lipschitz} under some smoothness requirements on $g$ (but with no assumption on the continuity of the derivatives of $v$). In Section \ref{sec:lip} we will show that in our setup the boundary is locally Lipschitz as soon as the functions $g$ and $r$ are time-homogeneous, by leveraging on the resulting monotonicity of the stopping boundary. 

\begin{remark}
Notice that if $t\mapsto b(t)$ is locally Lipschitz on $[0,T)$ the theorem above tells us that $b\in C^1([0,T))$. We cannot expect $b\in C^1([0,T])$ in our setting because it is well-known that, for example, the optimal boundary of the American call/put option problem is not differentiable at the maturity $T$ (cf., \cite{barles1995critical}, \cite{evans2002american}, \cite{lamberton2003critical}).
\end{remark}

As part of the problem setting, we recall a well-known fact from optimal stopping theory (cf.\ \cite[Thm.\ 2.7.7]{karatzas1998methods} and \cite[Prop.\ 2.6]{jacka1991optimal} for the American put problem and \cite[Ch.\ III.7]{peskir2006optimal} for a general overview of the method). We let $\cR$ be an open rectangle as in Assumption \ref{ass:main}. With no loss of generality we assume that $\cR=(0,T_1)\times (x_1,x_2)$ and $x_1<b(t)<x_2$ for all $t\in[0,T_1]$. Setting $u\coloneqq v-g$ it holds
\begin{align}\label{eq:h}
\begin{aligned}
\dot u(t,x)+(\cL u)(t,x)-r(t,x)u(t,x)= -h(t,x),\quad (t,x)\in\cR\cap\cC.
\end{aligned}
\end{align}
Then, under Assumption \ref{ass:main}, we have $\dot u\in C^{1,2}(\cC\cap\cR)$ (cf.\ \cite[Thm.\ 3.5.11]{friedman2008partial}) and we obtain a PDE for $\dot u$ in the form
\begin{align}
\begin{aligned}
\ddot u(t,x)+(\cL \dot u)(t,x)-r(t,x)\dot u(t,x)= -H(t,x),\quad (t,x)\in\cR\cap\cC,
\end{aligned}
\end{align}
where $H(t,x)\coloneqq\dot h(t,x)+\dot \mu(t,x) u_x(t,x)-\dot r(t,x)u(t,x)$ and the boundary condition $\dot u(t,b(t))=0$ holds for all $t\in[0,T_1]$. Set $\tau^{t,x}_{x_2}=\inf\{s\in[0,\infty):X^{t,x}_{t+s}\ge x_2\}$ and let $\sigma^{t,x}_*=\tau^{t,x}_*\wedge (T_1-t)$. Notice that $\sigma_*^{t,x}\wedge \tau_{x_2}^{t,x}$ is the exit time of $(t+s,X^{t,x}_{t+s})$ from $\cR\cap\cC$. Then the process
\[
\lambda\mapsto D_{t,\sigma_*\wedge\tau_{x_2}\wedge \lambda}\dot u\big(t\!+\!\sigma_*\wedge\tau_{x_2}\wedge \lambda,X_{t+\sigma_*\wedge\tau_{x_2}\wedge \lambda}\big)\!+\!\int_0^{\sigma_*\wedge\tau_{x_2}\wedge \lambda}\!\!D_{t,s}H\big(t\!+\!s,X_{t+s}\big)\ud s,
\]
is a bounded local martingale under $\P_{t,x}$. Hence, taking expectation
\begin{align}\label{eq:udot}
\begin{aligned}
\dot u(t,x)&=\E_{t,x}\Big[D_{t,\sigma_*\wedge\tau_{x_2}}\dot u\big(t+\sigma_*\wedge\tau_{x_2},X_{t+\sigma_*\wedge\tau_{x_2}}\big)+\int_0^{\sigma_*\wedge\tau_{x_2}}D_{t,s}H\big(t+s,X_{t+s}\big)\ud s\Big]\\
&=\E_{t,x}\Big[\1_{\{\sigma_*=T_1-t,\ \sigma_*\le \tau_{x_2}\}}D_{t,T_1-t}\dot u\big(T_1,X_{T_1}\big)+\1_{\{\tau_{x_2}<\sigma_*\}}D_{t,\tau_{x_2}}\dot u\big(t+\tau_{x_2},x_2\big)\\
&\qquad\qquad+\int_0^{\sigma_*\wedge\tau_{x_2}}D_{t,s}H\big(t+s,X_{t+s}\big)\ud s\Big],
\end{aligned}
\end{align}
where we used $\dot u(t+\sigma_*,X_{t+\sigma_*})\1_{\{\sigma_*<T_1-t\}}=\dot u(t+\tau_*,b(t+\tau_*))\1_{\{\tau_*<T_1-t\}}=0$ for the second equality. The probabilistic representation in \eqref{eq:udot} is the starting point of our analysis. From now on we tacitly assume Assumption \ref{ass:main}.

\subsection{Simple sufficient conditions for a Lipschitz boundary}\label{sec:lip}
In this section we consider the special case when $T<\infty$, $g(t,x)=g(x)$, $\mu(t,x)=\mu(x)$ and $r(t,x)=r(x)\!\ge\! 0$. For any Borel set $A\subset\cI$ and any $s<t$ we have $\P_{s,x}(X_t\in A)=\P_{0,x}(X_{t-s}\in A)$. It is convenient to set $\P_x=\P_{0,x}$ and $\E_x=\E_{0,x}$. Then, for $s<t$ we deduce
\[
v(t,x)=\sup_{0\le \tau\le T-t}\E_x\Big[\e^{-\int_0^\tau r(X_u)\ud u}g\big(X_\tau\big)\Big]\le \sup_{0\le \tau\le T-s}\E_x\Big[\e^{-\int_0^\tau r(X_u)\ud u}g\big(X_\tau\big)\Big]=v(s,x), 
\]
hence $t\mapsto v(t,x)$ is non-increasing and $\dot v\le 0$ in $\cC$. That implies 
\begin{align}\label{eq:bmon}
\text{$t\mapsto b(t)$ is non-decreasing on $[0,T]$.}
\end{align}
Such monotonicity of the boundary, jointly with $\sigma(b(t))>0$ for $t\in[0,T)$, implies that any boundary point $(t,b(t))$ is regular in the sense of diffusions for the interior of the stopping set $\cS$ (cf.\ \cite[Lemma 4 and Example 7]{de2020global}), provided that also $x\mapsto X^x_t(\omega)$ is continuous. As a result, the value function $v$ is continuously differentiable across the boundary $b$ as soon as the functions $g$ and $r$ and the stochastic flow $x\mapsto X^x_t(\omega)$ are differentiable, and mild integrability conditions hold (cf.\ \cite[Thm.\ 10 and Thm.\ 15]{de2020global}). In conclusion, Assumption \ref{ass:main} is often easily verified in specific problems covered by the set-up of this section.

Notice that $H(t,x)=0$ in \eqref{eq:udot}. Then, by time-homogeneity of $\mathbb{X}$ and $g$ we can rewrite \eqref{eq:udot} as 
\begin{align}\label{eq:udotto}
\begin{aligned}
\dot u(t,x)&=\E_{x}\Big[\1_{\{\sigma_*=T_1-t,\ \sigma_*\le \tau_{x_2}\}}\e^{-\int_0^{T_1-t} r(X_s)\ud s}\dot u\big(T_1,X_{T_1-t}\big)\\
&\quad\qquad+\!\1_{\{\tau_{x_2}<\sigma_*\}}\e^{-\int_0^{\tau_{x_2}} r(X_s)\ud s}\dot u\big(t\!+\!\tau_{x_2},x_2\big)\Big],
\end{aligned}
\end{align}
with $\sigma_*=\tau_*\wedge(T_1-t)$ and
$\tau_*=\inf\{s\in[0,T-t):X_{s}\le b(t+s)\}\wedge (T-t)$. The next proposition provides a rate at which $\dot v$ vanishes as $(t,x)$ approaches the boundary of the set $\cC$. 
\begin{proposition}\label{prop:dotv}
There is $c=c(T_1,x_2)>0$ such that 
\begin{align}\label{eq:budot}
\big|\dot u(t,x)\big|\le c\big(x-b(t)\big)\Big(1+\frac{1}{(T_1-t)^{1/2}}\Big),
\end{align}
for all $x>b(t)$ and $t\in[0,T_1)$.
\end{proposition}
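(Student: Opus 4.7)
The plan is to exploit the probabilistic representation \eqref{eq:udotto} and bound the two terms separately. By Assumption \ref{ass:main}(i), $\dot u\in C(\overline\cR)$, hence there is a constant $K$ with $|\dot u(s,y)|\le K$ on $\overline\cR$; in particular, $|\dot u(T_1,X_{T_1-t})|\le K$ whenever $X_{T_1-t}\in[x_1,x_2]$ and $|\dot u(t+\tau_{x_2},x_2)|\le K$. Since $r\ge 0$, the discount factor is bounded by $1$, so \eqref{eq:udotto} yields
\[
|\dot u(t,x)| \le K\,\P_x(\tau_{x_2}<\sigma_*) + K\,\P_x\!\big(\sigma_*=T_1-t,\,\sigma_*\le \tau_{x_2}\big).
\]
The two probabilities above will be estimated by $c(x-b(t))$ and $c(x-b(t))/\sqrt{T_1-t}$, respectively.

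For the first probability, I would use the monotonicity \eqref{eq:bmon}: since $b(t+s)\ge b(t)$ for $s\ge 0$, the moving boundary is above $b(t)$, so $\sigma_*\le \tau_{b(t)}$, where $\tau_{b(t)}=\inf\{s:X_s\le b(t)\}$ under $\P_x$. Therefore $\{\tau_{x_2}<\sigma_*\}\subset\{\tau_{x_2}<\tau_{b(t)}\}$, and by the classical scale-function formula,
\[
\P_x(\tau_{x_2}<\tau_{b(t)})=\frac{S(x)-S(b(t))}{S(x_2)-S(b(t))},
\]
where $S$ is the scale function of the time-homogeneous diffusion $\mathbb{X}$. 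Under Assumption \ref{ass:main}(iii)--(iv), $S'$ is continuous and strictly positive on $[x_1,x_2]$, hence bounded above and away from zero there; since $b(t)\le b(T_1)<x_2$, this probability is bounded by $c_1(x-b(t))$.

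For the second probability, the event requires $X_s\in(b(t+s),x_2)$ for every $s\in[0,T_1-t)$, and again by $b(t+s)\ge b(t)$ it is contained in $\{\tau_{b(t)}\wedge\tau_{x_2}\ge T_1-t\}$. To estimate the latter by $c(x-b(t))/\sqrt{T_1-t}$ I would reduce to Brownian motion via Girsanov and a deterministic time-change. On $[x_1,x_2]$ the drift $\mu$ is bounded and $\sigma\ge c_\cR>0$, so the Girsanov transform removing the drift up to $\tau_{x_1}\wedge\tau_{x_2}\wedge T$ is well-defined and has Radon--Nikodym derivative with uniformly bounded $L^2$-norm. Under the new measure $\Q$ the process satisfies $\ud X=\sigma(X)\ud\widetilde B$; time-changing by $\rho_s=\int_0^s\sigma^2(X_u)\ud u$ gives a $\Q$-Brownian motion, and $c_\cR^2 s\le \rho_s\le \|\sigma\|_\infty^2 s$. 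Hence $\Q_x(\tau_{b(t)}\wedge\tau_{x_2}\ge T_1-t)$ is controlled by the probability that a Brownian motion starting at $x$ does not hit $b(t)$ before time $c_\cR^2(T_1-t)$, which by the reflection principle is at most $\sqrt{2/\pi}(x-b(t))/(c_\cR\sqrt{T_1-t})$. A Cauchy--Schwarz estimate against the Radon--Nikodym derivative (whose bound depends only on $\mu$ and $\sigma$ restricted to the compact set $[x_1,x_2]$ and on $T_1$) then yields the desired bound under $\P_x$.

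Combining the two estimates gives $|\dot u(t,x)|\le c(x-b(t))\bigl(1+(T_1-t)^{-1/2}\bigr)$ as claimed. The main subtlety is step three: ensuring that the Brownian hitting-probability estimate transfers cleanly to our diffusion. The use of Girsanov followed by a time change is standard, but care is needed because the Radon--Nikodym density must remain bounded on the relevant stopping region; this works here precisely because Assumption \ref{ass:main}(iii)--(iv) gives bounded drift and diffusion coefficient bounded away from zero on the compact rectangle $\overline\cR$, so everything reduces to uniform bounds on a fixed compact set.
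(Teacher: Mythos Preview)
Your decomposition and the scale-function estimate for $\P_x(\tau_{x_2}<\tau_{b(t)})$ match the paper exactly. The gap is in the second term. After Girsanov and time-change you correctly obtain
\[
\Q_x\big(\tau_{b(t)}\wedge\tau_{x_2}\ge T_1-t\big)\le \sqrt{\tfrac{2}{\pi}}\,\frac{x-b(t)}{c_\cR\sqrt{T_1-t}},
\]
but then ``a Cauchy--Schwarz estimate against the Radon--Nikodym derivative'' gives only
\[
\P_x(A)\le \Big\|\tfrac{\ud\P}{\ud\Q}\Big\|_{L^2(\Q)}\,\Q_x(A)^{1/2}\le C\Big(\frac{x-b(t)}{\sqrt{T_1-t}}\Big)^{1/2},
\]
which is a \emph{square-root} bound in $x-b(t)$, not the linear bound \eqref{eq:budot}. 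The Radon--Nikodym density is the exponential of a stochastic integral and is not pathwise bounded on $A$, so you cannot upgrade H\"older to $p=\infty$; any $p<\infty$ costs you a power strictly less than $1$ of $\Q_x(A)$. This loss is fatal for the subsequent Lipschitz argument, which needs the linear rate.

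The paper avoids measure change altogether for this term: it freezes the coefficients outside $[b(t),x_2]$, uses the pathwise comparison $\bar X_s\le x+\mu_0 s+W_{\chi_s}$ (with $\chi_s=\int_0^s\bar\sigma^2(\bar X_u)\ud u$ a time change under the \emph{original} measure $\P$), and then bounds $\P_x(\bar\nu_{b(t)}\ge T_1-t)$ by the explicit density of the first-passage time of Brownian motion with drift. Because everything stays under $\P$, the linear factor $x-b(t)$ from the hitting-time density survives intact. If you want to keep your Girsanov route, you would need an argument that preserves the first power of $\Q_x(A)$---for instance, showing the density is bounded on $A$ (it is not) or using a different change of measure; the simplest repair is to drop Girsanov and argue by comparison under $\P$ as the paper does.
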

\begin{proof}
Thanks to \eqref{eq:bmon}, setting $\nu_b=\inf\{s\in[0,\infty):X_s\le b\}$ for $b\in\cI$, we have 
\begin{align*}
\big\{\sigma_*=T_1-t,\ \sigma_*\le \tau_{x_2}\big\}&=\big\{b(t+s)<X_s<x_2,\,\forall s\in[0,T_1-t)\big\}\\
&\subseteq\big\{b(t)<X_s<x_2,\,\forall s\in[0,T_1-t)\big\}=\big\{\tau_{x_2}\wedge \nu_{b(t)}\ge T_1-t\big\}.
\end{align*}
Since $\sigma_*\le \nu_{b(t)}\wedge(T_1-t)$, then 
$\{\tau_{x_2}<\sigma_*\}\subseteq \{\tau_{x_2}<\nu_{b(t)}\}$.
From those two inclusions and \eqref{eq:udotto}, it follows
\begin{align}\label{eq:boundudot0}
\begin{aligned}
\big|\dot u(t,x)\big|&\le \sup_{b(t)\le z\le x_2}\big|\dot u(T_1,z)\big|\P_x\big(\tau_{x_2}\wedge \nu_{b(t)}\ge T_1-t\big)+\sup_{t\le s\le T_1}\big|\dot u(s,x_2)\big|\P_x\big(\tau_{x_2}<\nu_{b(t)}\big)\\
&\le c\Big(\P_x\big(\tau_{x_2}\wedge \nu_{b(t)}\ge T_1-t\big)+\P_x\big(\tau_{x_2}<\nu_{b(t)}\big)\Big)\\
&\le c\Big(\P_x\big(\tau_{x_2}> \nu_{b(t)}\ge T_1-t\big)+2\P_x\big(\tau_{x_2}<\nu_{b(t)}\big)\Big),
\end{aligned}
\end{align}
where $c=c(T_1,x_2)>0$ is a constant. We estimate the two terms above separately.

Let $S:\cI\to (0,\infty)$ be the scale function associated to the process $\mathbb X$. Then, from the standard theory for one-dimensional, time-homogeneous diffusions we know that 
\begin{align}\label{eq:bP0}
\P_x\big(\tau_{x_2}<\nu_{b(t)}\big)=\frac{S(x)-S\big(b(t)\big)}{S(x_2)-S\big(b(t)\big)}\le \frac{S(x)-S\big(b(t)\big)}{S(x_2)-S\big(b(T_1)\big)}\le c\big(x-b(t)\big),
\end{align}
where $S(b(T_1))\ge S(b(t))$ because $b(T_1)\ge b(t)$ and the final inequality holds with $c=c(T_1,x_2)>0$ because under our assumptions $S'$ is continuous and $x_2>b(T_1)$. 

For the other term on the right-hand side of \eqref{eq:boundudot0} we simplify the notation by setting $\eta=T_1-t$. 
On the event $\{\tau_{x_2}> \nu_{b(t)}\ge \eta\}$ we have $X_s=\bar X_s$, for all $s\in[0,\eta]$, $\P_x$-a.s., where $(\bar X_s)_{s\ge 0}$ is the solution of 
\begin{align*}
\bar X_{s}=x+\int_0^s\bar \mu(\bar X_\lambda)\ud \lambda+\int_0^s\bar \sigma(\bar X_\lambda)\ud B_\lambda,\quad s\in[0,\infty),
\end{align*}
with $\bar \mu$ and $\bar \sigma$ that coincide with $\mu$ and $\sigma$ on $[b(t),x_2]$, respectively, and are extended continuously to be constant outside of that interval. Then $\nu_{b(t)}=\bar \nu_{b(t)}=\inf\{s\in[0,\infty):\bar X_s\le b(t)\}$ on the event $\{\tau_{x_2}> \nu_{b(t)}\}$ and
\begin{align*}
\begin{aligned}
\P_x\big(\tau_{x_2}> \nu_{b(t)},\nu_{b(t)} \ge \eta\big)=\P_x\big(\tau_{x_2}> \nu_{b(t)},\bar\nu_{b(t)} \ge \eta\big)\le \P_x\big(\bar\nu_{b(t)} \ge \eta\big).
\end{aligned}
\end{align*}
In order to estimate $\P_x(\bar\nu_{b(t)} \ge \eta)$ it is convenient to introduce a time change. Notice that $\bar \sigma(x)\ge\inf_{x\in[b(t),x_2]}\sigma(x)\eqqcolon \sigma_0>0$ and $\bar \mu(x)\le \sup_{x\in[b(t),x_2]}|\mu(x)|\eqqcolon \mu_0$ for $x\in\cI$. Then 
\begin{align*}
\bar X_s&\le x+\mu_0 s+\int_0^s\bar \sigma(\bar X_u)\ud B_u\\
&=x+\mu_0 s+W_{\int_0^s\bar \sigma^2(\bar X_u)\ud u}\le  x+\frac{\mu_0}{\sigma^2_0}\chi_s+W_{\chi_s},
\end{align*}
where $\chi_s\coloneqq\int_0^s\bar \sigma^2(\bar X_u)\ud u$ and $W$ is a Brownian motion for the time-changed filtration $\cG_s=\cF_{\alpha_s}$ with $\alpha_s\coloneqq\inf\{u\ge 0:\chi_u>s\}$.
It follows that
\begin{align*}
\begin{aligned}
\P_x\big(\bar\nu_{b(t)} \ge \eta\big)&=\P_x\big(\bar\nu_{b(t)} > \eta\big)=\P_x\Big(\inf_{s\in[0,\eta]}\bar X_s>b(t)\Big)\\
&\le \P\Big(\inf_{s\in[0,\eta]}\big((\mu_0/\sigma^2_0)\chi_s+W_{\chi_s}\big)>b(t)-x\Big)\\
&= \P\Big(\inf_{r\in[0,\chi_\eta]}\big((\mu_0/\sigma^2_0)r+W_{r}\big)>b(t)-x\Big)\\
&\le \P\Big(\inf_{r\in[0,\sigma^2_0\eta]}\big((\mu_0/\sigma^2_0)r+W_{r}\big)>b(t)-x\Big)\\
&= \P\Big(\sup_{r\in[0,\sigma^2_0\eta]}\big(-(\mu_0/\sigma^2_0)r+\tilde W_{r}\big)<x-b(t)\Big),
\end{aligned}
\end{align*}
where in the second inequality we used $\chi_\eta\ge \sigma^2_0\eta$ and in the final expression $\tilde W_s=-W_s$ is again a Brownian motion. The last probability can be calculated explicitly using that 
\[
\Big\{\sup_{r\in[0,\sigma^2_0\eta]}\big(-(\mu_0/\sigma^2_0)r+\tilde W_{r}\big)<x-b(t)\Big\}=\big\{\zeta_{x-b(t)}>\sigma^2_0\eta\big\},
\]
with $\zeta_{z}\coloneqq \inf\{s\in[0,\infty):-(\mu_0/\sigma^2_0)s+\tilde W_{s}\ge z\}$. Since 
\[
\P(\zeta_z\in\ud s)=\frac{|z|}{\sqrt{2\pi s^3}}\exp\Big(-\frac{\big[z+(\mu_0/\sigma^2_0)s\big]^2}{2s}\Big)\ud s,
\]
(cf.\ \cite[Eq.\ (5.12), Sec.\ 3.5.C]{KS}) then
\begin{align*}
\begin{aligned}
\P_x\big(\bar\nu_{b(t)} \ge \eta\big)&\le \big(x-b(t)\big)\int_{\sigma^2_0\eta}^\infty \frac{1}{\sqrt{2\pi s^3}}\exp\Big(-\frac{\big[(x-b(t))+(\mu_0/\sigma^2_0)s\big]^2}{2s}\Big)\ud s\\
&\le \big(x-b(t)\big)\int_{\sigma^2_0\eta}^\infty \frac{1}{\sqrt{2\pi s^3}}\ud s\le \frac{x-b(t)}{\sigma_0(T_1-t)^{1/2}},
\end{aligned}
\end{align*}
where in the final expression we recall $\eta=T_1-t$.

Combining the latter bound with \eqref{eq:bP0} and \eqref{eq:boundudot0} and redefining the constant $c=c(T_1,x_2)>0$ we arrive at \eqref{eq:budot} as needed.
\end{proof}

Next we are going to show that in this setup the boundary $b$ is locally Lipschitz.
\begin{proposition}
The mapping $t\mapsto b(t)$ is Lipschitz on $[0,T_1]$ for any $T_1\in[0,T)$.
\end{proposition}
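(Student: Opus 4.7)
The plan is to combine the gradient estimate of Proposition \ref{prop:dotv} with a uniform quadratic non-degeneracy of $u$ at the free boundary, exploiting the fact that in the present time-homogeneous setting $\dot u(t,b(t))=0$ is automatic: since $u\equiv 0$ on $\cS$ and $\dot g\equiv 0$, we have $\dot u\equiv 0$ on the interior of $\cS$, and continuity of $\dot u$ across the boundary (Assumption \ref{ass:main}(i)) carries this down to $(t,b(t))$.

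First I would apply Proposition \ref{prop:dotv} with some $T_2\in(T_1,T)$ in place of $T_1$ (enlarging the rectangle accordingly, using Assumption \ref{ass:main} and the compactness of $\{(t,b(t)):t\in[0,T_1]\}$). Because $(T_2-\lambda)^{-1/2}\le(T_2-T_1)^{-1/2}$ on $[0,T_1]$, this yields a uniform gradient bound $|\dot u(\lambda,x)|\le C_0(x-b(\lambda))$ for $\lambda\in[0,T_1]$ and $x>b(\lambda)$ in the rectangle. Then, for $0\le t<s\le T_1$, monotonicity \eqref{eq:bmon} gives $(\lambda,b(s))\in\overline\cC$ for every $\lambda\in[t,s]$ and $u(s,b(s))=0$, so
\[
u(t,b(s))=-\int_t^s \dot u(\lambda,b(s))\,\ud\lambda.
\]
Combining this with the gradient bound and with $b(s)-b(\lambda)\le b(s)-b(t)$ gives the upper estimate $u(t,b(s))\le C_0(b(s)-b(t))(s-t)$.

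For the matching lower bound I would use the PDE \eqref{eq:h}. Evaluating it at a boundary point from the continuation side, where $\dot u(t,b(t))=u_x(t,b(t))=u(t,b(t))=0$, one reads off
\[
u_{xx}(t,b(t)^+)=-\frac{2h(t,b(t))}{\sigma^2(b(t))}\ge 2c_0>0,
\]
uniformly in $t\in[0,T_1]$ by compactness of the boundary graph and by $h<0$ on $\overline\cR$. Since the PDE expresses $u_{xx}$ on $\cC\cap\cR$ in terms of quantities that are continuous on $\overline\cR$, $u_{xx}$ extends continuously to $\overline{\cC\cap\cR}$, so there exists $\delta>0$ with $u_{xx}(t,x)\ge c_0$ for all $t\in[0,T_1]$ and $x\in[b(t),b(t)+\delta]$. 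For $s-t$ small enough that $b(s)-b(t)\le\delta$ (possible by uniform continuity of $b$ on $[0,T_1]$), the second-order Taylor expansion of $u(t,\cdot)$ around $b(t)$, together with $u(t,b(t))=u_x(t,b(t))=0$, yields
\[
u(t,b(s))\ge \tfrac{c_0}{2}\bigl(b(s)-b(t)\bigr)^2.
\]
Comparing upper and lower bounds gives $b(s)-b(t)\le(2C_0/c_0)(s-t)$ for small $s-t$; for larger increments the Lipschitz property is trivial from boundedness of $b$ on $[0,T_1]$.

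The delicate point, and the one on which the whole argument hinges, is the uniform quadratic non-degeneracy $u_{xx}\ge c_0$ in a strip above the boundary; this rests squarely on the strict inequality $h<0$ in Assumption \ref{ass:main}(ii), without which $u$ could vanish faster than quadratically at $b$ and the comparison with the linear-in-$(s-t)$ upper bound would fail to yield Lipschitz control.
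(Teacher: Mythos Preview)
Your proposal is correct and follows essentially the same approach as the paper's own proof: both combine the linear-in-distance bound on $\dot u$ from Proposition~\ref{prop:dotv} with the uniform quadratic non-degeneracy $u_{xx}(t,b(t)^+)=-2h(t,b(t))/\sigma^2(b(t))>0$ obtained from \eqref{eq:h}, and compare the resulting upper and lower bounds for $u(t,b(s))$ (the paper writes the quadratic lower bound via Fubini as $\int_{b(s)}^{b(t)}(b(t)-y)u_{xx}(s,y)\,\ud y$ rather than via Taylor, and handles the $(T_1-t)^{-1/2}$ singularity by restricting to $[0,T_2]\subset[0,T_1)$ instead of enlarging to $T_2>T_1$, but these are cosmetic differences).
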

\begin{proof}
Since we want to use the formulae and notations developed above, we are actually going to show that $t\mapsto b(t)$ is Lipschitz on $[0,T_2]$ for any $T_2\in[0,T_1)$. However, $T_1\in[0,T)$ is arbitrary and therefore there is no loss of generality.

For $0\le s<t\le T_2$ we have $u(s,b(s))=u(t,b(t))=0$ and by continuous differentiability of $u$ we can write (recall $b(s)\le b(t)$)
\begin{align*}
\begin{aligned}
u(t,b(t))-u(s,b(s))&=u(t,b(t))-u(s,b(t))+u(s,b(t))-u(s,b(s))\\
&=\int_s^t\dot u(\lambda,b(t))\ud \lambda+\int_{b(s)}^{b(t)}u_x(s,z)\ud z\\
&=\int_s^t\dot u(\lambda,b(t))\ud \lambda+\int_{b(s)}^{b(t)}\int_{b(s)}^z u_{xx}(s,y)\ud y\,\ud z\\
&=\int_s^t\dot u(\lambda,b(t))\ud \lambda+\int_{b(s)}^{b(t)}\big(b(t)-y\big) u_{xx}(s,y)\ud y,
\end{aligned}
\end{align*}
where the third equality holds because $u_x(s,b(s))=0$ and the final one is by swapping the order of integration. Then we have
\begin{align}\label{eq:lipb1}
-\int_s^t\dot u(\lambda,b(t))\ud \lambda=\int_{b(s)}^{b(t)}\big(b(t)-y\big) u_{xx}(s,y)\ud y.
\end{align}
Recalling $\dot u\le 0$ and Proposition \ref{prop:dotv}
\begin{align}\label{eq:lipb2}
\begin{aligned}
-\int_s^t\dot u(\lambda,b(t))\ud \lambda&\le c\Big(1+\frac{1}{(T_1-t)^{1/2}}\Big)\int_s^t\big(b(t)-b(\lambda)\big)\ud \lambda\\
&\le c\Big(1+\frac{1}{(T_1-t)^{1/2}}\Big)\big(b(t)-b(s)\big)(t-s),
\end{aligned}
\end{align}
where we also used monotonicity of $b$.

We notice that \eqref{eq:h} in this setup reads $\dot u(t,x)+(\cL u)(t,x)-ru(t,x)=-h(x)$, with $h(x)=(\cL g)(x)-r g(x)$, for $(t,x)\in\cR\cap\cC$. Thanks to continuous differentiability of $u$ and the boundary conditions $u(s,b(s))=u_x(s,b(s))=\dot u(s,b(s))=0$, we obtain  
\[
\lim_{\cC\ni(r,y)\to (s,b(s))}u_{xx}(r,y)=-\frac{2}{\sigma^2(b(s))}h(b(s))>0,
\]
where the inequality holds because of \eqref{eq:h<0}.
That shows continuity of $u_{xx}$ in the set $\cB_{s,t}\coloneqq\{(r,y):r\in[s,t],\, y\ge b(r)\}$. Then, for $s$ and $t$ sufficiently close, by continuity of $r\mapsto b(r)$ and uniform continuity of $u_{xx}$ in the closed set $\overline{\cR\cap\cB_{s,t}}$ we have the lower bound
\begin{align}\label{eq:lipb3}
\begin{aligned}
\int_{b(s)}^{b(t)}\big(b(t)-y\big) u_{xx}(s,y)\ud y&\ge -\frac{h(b(s))}{\sigma^2(b(s))}\int_{b(s)}^{b(t)}\big(b(t)-y\big)\ud y\\
&\ge -\frac{h(b(s))}{2\sigma^2(b(s))} \big(b(t)-b(s)\big)^2.
\end{aligned}
\end{align}

Combining \eqref{eq:lipb1}, \eqref{eq:lipb2} and \eqref{eq:lipb3} yields
\begin{align*}
0\le \frac{\big(b(t)-b(s)\big)}{t-s}\le 2c\Big(1+\frac{1}{(T_1-T_2)^{1/2}}\Big)\sup_{s\in[0,T_2]}\frac{\sigma^2(b(s))}{|h(b(s))|} ,
\end{align*}
which concludes our proof.
\end{proof}

\subsection{An outline of the proof of Theorem \ref{thm:main}}\label{sec:outline}
The proof of Theorem \ref{thm:main} requires numerous fine estimates which will take up much of the space in the following sections. However, the general idea is rather simple and we would like to illustrate it here before going into the technical details. Since $u(t,b(t))=0$ for all $t\in[0,T]$, a formal differentiation with respect to $t$ suggests that we should seek an equation for $\dot b(t)$ of the form
\[
\dot b(t)=-\frac{\dot u(t,b(t))}{u_x(t,b(t))},\quad t\in[0,T).
\]
The smooth-fit principle in optimal stopping prescribes continuity of $\dot u$ and $u_x$ across the optimal boundary. When smooth-fit holds (cf.\ \cite{de2020global}) both numerator and denominator in this expression vanish. From a first order expansion in the spatial variable we then expect that the correct equation should read
\[
\dot b(t)=-\frac{\dot u_x(t,b(t))}{u_{xx}(t,b(t))},\quad t\in[0,T).
\]
The denominator is in fact explicit and it can be derived from \eqref{eq:h} by imposing $u(t,b(t))=u_x(t,b(t))=\dot u(t,b(t))=0$ and solving for $u_{xx}$. The main task is then to calculate $\dot u_x(t,b(t))$. We do it starting from the probabilistic representation of $\dot u$ provided in \eqref{eq:udot} and obtaining an estimate of the form $\dot u(t,b(t)+\delta)=\delta \Lambda (t)+o(\delta)$ so that we can identify the mixed derivative $\dot u_x(t,b(t))$ with the function $\Lambda(t)$ as
\[
\dot u_x(t,b(t))\coloneqq\lim_{\delta\downarrow 0}\frac{\dot u(t,b(t)+\delta)-\dot u(t,b(t))}{\delta}=\Lambda(t).
\]
Then we show that $t\mapsto \Lambda(t)$ is continuous. Now we provide a quick tour of our argument of proof.

Since the law of $\mathbb X$, starting from $X_t=x$, is not known explicitly, it is difficult to study directly $\dot u$ in the form of \eqref{eq:udot}. We must instead reduce the problem to the case of time-space Brownian motion $(t,W)$ for which more powerful tools are available. For that we first change the scale via a function $f(x)$ in order to reduce the SDE for $\mathbb X$ to a SDE with unit diffusion coefficient (the so-called Lamperti transform \cite{lamperti1964simple}). Then we leverage on the Lipschitz property of the optimal boundary $b$ to implement a measure change and Girsanov transformation that have two effects: they remove the drift from the SDE and change the stopping time $\sigma_*$ into the first passage time of a Brownian motion at a constant threshold (depending on the distance $h=f(x)-f(b(t))$ of the initial point from the boundary). These two steps yield a probabilistic representation of $\dot u(t,x)$ as a functional of the triple $(W,\bar W,f(b))$ where $(W_s)_{s\ge 0}$ is a Brownian motion, $(\bar W_s)_{s\ge 0}$ its running maximum and $s\mapsto f(b(s))$ the Lamperti transform of the optimal boundary. Pitman's theorem establishes the equivalence in law $(W,\bar W)=(2J-\rho,J)$ with $(\rho_s)_{s\ge 0}$ a 3-dimensional Bessel process and $(J_s)_{s\ge 0}$ its minimum over the remaining time (i.e., on $[s,\infty)$). We use that to obtain a more convenient representation of $\dot u(t,x)$ of the form 
\[
\dot u(t,x)=\E\big[\Theta_h\big(\rho,J, f(b)\big)\big],
\] 
where $(\Theta_h)_{h>0}$ is a family of non-linear functionals of the paths of the triple $(\rho,J, f(b)\big)$ (cf.\ Proposition \ref{prop:vpitman}) parametrised by the distance from the boundary $h=f(x)-f(b(t))$. The key step in the proof of Theorem \ref{thm:main} is represented by Theorem \ref{thm:expansion} which, via several technical lemmas (cf.\ Lemmata \ref{lem:delta}--\ref{lem:V2h}), yields an expansion of the form  
\[
\E\big[\Theta_h\big(\rho,J, f(b)\big)\big]=h\E\big[\Pi\big(\rho,J, f(b)\big)\big]+o(h),
\] 
for another non-linear functional $\Pi$ of the paths of $(\rho,J, f(b)\big)$. It is precisely in this expansion that we appreciate the advantage of working with the pair of processes $(\rho,J)$ rather than $(W,\bar W)$. Indeed, in numerous instances we can perform explicit calculations using that the conditional law of $J_t$ given the path $(\rho_s)_{s\le t}$ is uniform on $[0,\rho_t]$ (cf.\ Section \ref{sec:pitman} for details). Returning to the original coordinates (i.e., accounting for the relation between $h=f(x)-f(b(t))$ and $\delta=x-b(t)$) from such an expansion we finally find the form of the function $\Lambda(t)$ mentioned above and then the expression for $\dot b$ (cf.\ Proposition \ref{prop:dotb}). Continuity of the mapping $t\mapsto \Lambda(t)$ requires some additional work that we perform in Lemma \ref{lem:lambdacont}.

\section{Lamperti transform, a change of measure and Pitman's theorem}\label{sec:3}

In this section, we obtain a more tractable expression for $\dot u$ in three steps: first, we obtain a process with unit diffusion coefficient by adopting the Lamperti transform; second, we use a change of measure and Girsanov theorem to remove the drift from the new diffusion; third, we employ Pitman's theorem to rewrite our formulae in terms of a 3-dimensional Bessel process and its minimum over the remaining time. 

\subsection{Lamperti transform} 
Here we follow \cite{lamperti1964simple} for the change of scale. Given an arbitrary point $c\in\cI$ we let $f(x):=\int_c^x\sigma^{-1}(z)\ud z$ (notice that $f_x>0$) and define the process $\mathbb Y\coloneqq f(\mathbb X)$ as $Y^{t,y}_s= f(X^{t,x}_s)$ with $y=f(x)$. We notice that rescaling the spatial coordinates according to $f$ allows us to redefine our state-space $[0,T]\times \cI$ as $[0,T]\times \cJ$, with $\cJ=(f(a),f(b))$. With no loss of generality we let $\cR=(0,T_1)\times(x_1,x_2)$ and we rescale the set via $f$ to obtain $\cR_f=(0,T_1)\times(y_1,y_2)$ with $y_1=f(x_1)$ and $y_2=f(x_2)$.

Setting $\tau_{\cR}\coloneqq\inf\{s\in[0,\infty): (t\!+\!s,X_{t+s})\notin \cR\}=\inf\{s\in[0,\infty): (t\!+\!s,Y_{t+s})\notin \cR_f\}$, the dynamics of $\mathbb Y$ on the random time interval $[\![0,\tau_{\cR}]\!]$ is explicitly given by
\begin{align}\label{eq:lamperti}
Y^{t,y}_{t+s}=y+\int_0^s\gamma\big(t\!+\!r,Y^{t,y}_{t+r}\big)\ud r+B_{t+s}-B_t,\quad s\in[\![0,\tau_{\cR}]\!],
\end{align}
where 
\begin{align}\label{eq:gamma}
\gamma(t,y)\coloneqq\frac{\mu\big(t,f^{-1}(y)\big)}{\sigma(f^{-1}(y))}-\tfrac12\sigma_x\big(f^{-1}(y)\big).
\end{align}

We can write the function $\dot u$ from \eqref{eq:udot} in terms of the process $\mathbb Y$.  For that we introduce the notations 
\begin{align}\label{eq:not}
w(t,y)\coloneqq u(t,f^{-1}(y)),\quad c(t)\coloneqq f(b(t)),\quad R(t,y)\coloneqq r(t,f^{-1}(y)),\quad F(t,y)\coloneqq H(t,f^{-1}(y)).
\end{align}
The stopping times $\tau_{x_2}$ and $\sigma_*$ read $\tau_{x_2}=\tau^{t,y}_{2}\coloneqq\inf\{s\in[0,\infty): Y^{t,y}_{t+s}\ge y_2\}$ and $\sigma_*=\sigma_*^{t,y}\coloneqq\inf\{s\in[0,\infty):Y^{t,y}_{t+s}\le c(t+s)\}\wedge(T-t)$, respectively. When no confusion shall arise we also use the simpler notation $\tau_2=\tau^{t,y}_{2}=\tau_{x_2}$.

Since $\P(Y^{t,y}_s=z)=0$ for any $s\in[t,T]$ and $z\in\cJ$, we obtain $\P$-a.s. 
\begin{align*}
&D_{t,s}=D^y_{t,s}=\exp\Big(-\int_0^s R(t+v,Y^{t,y}_{t+v})\ud v\Big),\quad s\in[0,T_1-t],\\
&A^{t,y}_1\coloneqq \big\{c(t+s)<Y^{t,y}_{t+s}<y_2,\ \forall s\in[0,T_1-t]\big\}=\{\sigma_*=T_1-t,\ \sigma_*<\tau_{2}\},\\
&A^{t,y}_2\coloneqq\{\tau_{2}^{t,y}<T_1-t\}\cap\big\{Y^{t,y}_{t+s}>c(t+s),\ \forall s\in[0,\tau_{2}^{t,y}]\big\}=\{\tau_{2}<\sigma_*\},\\
&A^{t,y}_s\coloneqq\big\{c(t+v)<Y^{t,y}_{t+v}<y_2,\ \forall v\in[0,s]\big\}=\{s<\sigma_*\wedge \tau_{2}\},\quad s\in[0,T_1-t).
\end{align*}
Noting $\dot u(t,x)=\dot w(t,y)$ we rewrite \eqref{eq:udot} in the new parametrisation as
\begin{align}\label{eq:wdot}
\begin{aligned}
\dot w(t,y)&=\E\Big[\1_{A^{t,y}_1}D^y_{t,T_1-t}\dot w\big(T_1,Y^{t,y}_{T_1}\big)\\
&\qquad+\!\1_{A^{t,y}_2}D^y_{t,\tau_{2}}\dot w\big(t\!+\!\tau_{2},y_2\big)\!+\!\int_0^{T_1-t}\!\!\!\!\1_{A^{t,y}_s}D^y_{t,s}\,F\big(t\!+\!s,Y^{t,y}_{t+s}\big)\ud s\Big].
\end{aligned}
\end{align}
Throughout the paper we avoid using superscript $(t,y)$ on stopping times when the initial condition of the process is clear from the context. This allows us to keep a simple notation when no confusion shall arise.

\subsection{Change of measure and Girsanov theorem}\label{sec:Girs}
The assumption of local Lipschitz continuity of $b$ is key to developing an approach based on a change of probability measure, which transfers the dependence on $b$ from the stopping time $\sigma_*$ to a Dol\'eans-Dade exponential. In particular, we are going to use the shifted process $Y^{t,y}_{t+s}-c(t+s)$ which can be reduced to a Brownian motion, thanks to the change of measure. As a consequence, the optimal stopping time $\sigma_*$ reduces to the first time such Brownian motion hits zero.

We start by observing that $\dot c(t)=\dot b(t)/\sigma(b(t))$ is well-defined and bounded for a.e.\ $t\in[0,T_1]$, under the assumptions of Theorem \ref{thm:main}. Then, for $s\in[\![0,\tau_\cR]\!]$
\begin{align*}
\begin{aligned}
Y^{t,y}_{t+s}-c(t+s)&=y-c(t)+\int_0^s \big(\gamma(t+v,Y^{t,y}_{t+v})-\dot c(t+v)\big)\ud v +B_{t+s}-B_t\\
&=y-c(t)+W_{t+s}-W_t,
\end{aligned}
\end{align*}
where $W_{t+s}-W_t\coloneqq B_{t+s}-B_t+\int_0^s \big(\gamma(t+v,Y^{t,y}_{t+v})-\dot c(t+v)\big)\ud v$, for $s\in[\![0,\tau_\cR]\!]$, is a Brownian motion under the measure $\PP=\PP_{t,y}$ defined via the Radon-Nikodym derivative
\begin{align*}
&Z_{t,\tau_\cR}\coloneqq\frac{\ud \PP}{\ud \P}\Big|_{\cF_{\tau_\cR}}\\
&=\exp\Big(-\!\int_0^{\tau_\cR}\!\!\big(\gamma(t\!+\!s,Y^{t,y}_{t+s})\!-\!\dot c(t\!+\!s)\big)\ud B_{t+s}\!-\!\tfrac12\int_0^{\tau_\cR}\!\!\big(\gamma(t\!+\!s,Y^{t,y}_{t+s})\!-\!\dot c(t\!+\!s)\big)^2\ud s\Big).
\end{align*}
We can now use this change of measure to rewrite the three terms in \eqref{eq:wdot} in a more convenient form. Since $t$ is fixed throughout this discussion, we prefer to simplify our notation and with a slight abuse of notation but with no loss of generality we relabel $W_{t+s}-W_t$ as $-W_s$, where the change of sign is allowed by symmetry of Brownian motion. Moreover, we set $h\coloneqq y-c(t)>0$.

For the first term in \eqref{eq:wdot} we have
\begin{align*}
\begin{aligned}
&\E\big[\1_{A^{t,y}_1}D^y_{t,T_1-t}\dot w\big(T_1,Y^{t,y}_{T_1}\big)\big]=\EE\big[\1_{A^{t,y}_1}D^y_{t,T_1-t}\dot w\big(T_1,h+c(T_1)-W_{T_1-t}\big)\widetilde L^{h}_{t,T_1-t}\big],
\end{aligned}
\end{align*}
where 
\begin{align}\label{eq:Lh}
\widetilde L^{h}_{t,s}=\frac{1}{Z_{t,s}}=\exp\Big(-\int_0^{s}\!\!\gamma_{t,v}(h-W_v)\ud W_{v}\!-\!\tfrac12\int_0^{s}\!\!\big(\gamma_{t,v}(h-W_{v})\big)^2\ud v\Big),
\end{align}
is well-defined for $s\in [\![0,\tau_\cR]\!]$, with
\begin{align}\label{eq:gammatv}
\gamma_{t,v}(\xi)\coloneqq \gamma(t\!+\!v,c(t\!+\!v)\!+\!\xi)\!-\!\dot c(t\!+\!v).
\end{align}
 Notice that the discount factor reads explicitly as
\begin{align*}
D^y_{t,s}=\exp\Big(-\int_0^{s}R_{t,v}(h-W_v)\ud v\Big)\eqqcolon \widetilde D^h_{t,s},\quad s\in[0,T_1-t],
\end{align*}
with $R_{t,v}(\xi)\coloneqq R(t\!+\!v,c(t\!+\!v)\!+\!\xi)$. Somewhat more importantly, we can express the event $A^{t,y}_1$ in terms of $h-W$ as follows
\begin{align*}
A^{t,y}_1=\big\{0<h-W_s<y_2-c(t+s), \forall s\in[0,T_1-t]\big\}.
\end{align*}
Since $t$ is fixed, in order to keep track of the dependence on $h$ we use the notation $\widetilde A^{h}_1=A^{t,y}_1$.

For the second term in \eqref{eq:wdot} we use analogous arguments to obtain
\begin{align*}
\begin{aligned}
\E\big[\1_{A^{t,y}_2}D^y_{t,\tau_{2}}\dot w\big(t\!+\!\tau_{2},y_2\big)\big]=\EE\big[\1_{\widetilde A^{h}_2}\widetilde D^h_{t,\tau^h_{2}}\dot w\big(t\!+\!\tau^h_{2},y_2\big)\widetilde L^{h}_{t,\tau^h_2}\big],
\end{aligned}
\end{align*}
where $\widetilde L^{h}_{\tau^h_2}$ is the same as in \eqref{eq:Lh} with 
\begin{align}\label{eq:tau2h}
\tau^h_2\coloneqq \inf\{s\in[0,\infty): h-W_s+c(t+s)\ge y_2\},
\end{align}
and we rewrite the event $A^{t,y}_2$ as
\[
A^{t,y}_2=\big\{\tau^h_2<T_1-t\big\}\cap\{h-W_s>0,\,\forall s\in[0,\tau^h_2]\}=\widetilde A^h_2.
\]

Finally, for the integral term in \eqref{eq:wdot} we have
\begin{align*}
\int_0^{T_1-t}\!\!\!\!\E\big[\1_{A^{t,y}_s}D^y_{t,s}\,F\big(t\!+\!s,Y^{t,y}_{t+s}\big)\big]\ud s=\int_0^{T_1-t}\!\!\EE\big[\1_{\widetilde A^{h}_s}\widetilde D^h_{t,s}\,F_{t,s}\big(h\!-\!W_s\big)\widetilde L^{h}_{t,s}\big]\ud s,
\end{align*}
where $F_{t,s}(\xi)=F(t\!+\!s,c(t\!+\!s)+\xi)$, 
\[
A^{t,y}_s=\big\{0<h-W_v<y_2-c(t+v),\ \forall v\in[0,s]\big\}=\widetilde A^h_s,\quad s\in[0,T_1-t],
\]
and $\widetilde L^{h}_s$ is the same as in \eqref{eq:Lh} but with $T_1-t$ replaced by $s$.

Combining the expressions above and plugging them back into \eqref{eq:wdot} yields
\begin{align}\label{eq:dot.w.1}
\begin{aligned}
\dot w(t,y)&=\EE\big[\1_{\widetilde A^{h}_1}\widetilde L^{h}_{t,T_1-t}\widetilde D^h_{t,T_1-t}\dot w\big(T_1,h+c(T_1)-W_{T_1-t}\big)+\1_{\widetilde A^{h}_2}\widetilde L^{h}_{t,\tau^h_2}\widetilde D^h_{t,\tau^h_{2}}\dot w\big(t\!+\!\tau^h_{2},y_2\big)\big]\\
&\quad+\int_0^{T_1-t}\!\!\EE\big[\1_{\widetilde A^{h}_s}\widetilde L^{h}_{t,s}\widetilde D^h_{t,s}F_{t,s}\big(h\!-\!W_s\big)\big]\ud s.
\end{aligned}
\end{align}
Since $\dot w(t,c(t))=0$ and $\dot w$ is continuous, then $\dot w(t,y) = \dot w(t,h+c(t))$ vanishes when $h\to 0$ (i.e., $y\downarrow c(t)$). In the rest of our study we denote 
\begin{align}\label{eq:Vh}
V_h(t)\coloneqq\dot w(t,h+c(t)).
\end{align}
\begin{remark}\label{rem:P1}
Recall $\cR=(0,T_1)\times (x_1,x_2)$ and the rescaled $\cR_f$. With no loss of generality we are assuming $x_1<b(t)<x_2$ for all $t\in[0,T_1]$. Then $y_1<c(t)<y_2$ for all $t\in[0,T_1]$. Notice that
we can write 
$\sigma_*\wedge\tau_2=\sigma^h_*\wedge\tau^h_2$, with $\tau^h_2$ as in \eqref{eq:tau2h} and
$\sigma^h_*=\inf\{s\in [0,T_1-t]: h-W_s\le 0\}$.
Then,  $0\le h-W_s\le y_2-c(t+s)$ for all $s\in[\![0,\sigma^h_*\wedge\tau^h_2]\!]$. It follows that 
\[
(t+s,c(t+s)+h-W_s)\in\overline\cR_f,\quad\text{for $s\in[\![0,\sigma^h_*\wedge\tau^h_2]\!]$}.
\] 
\end{remark}

\subsection{A representation of $\dot w$ via Pitman's theorem}\label{sec:pitman} 
We start with a review of useful facts about the 3-dimensional Bessel process and Pitman's theorem. Denoting $\bar W_t=\sup_{0\le s\le t} W_s$ Pitman's theorem (cf.\ \cite[Thm.\ VI.3.5]{revuz2013continuous}) tells us that the process $\rho_t=2\bar W_t-W_t$, for $t\ge 0$, is a 3-dimensional Bessel process starting at zero. Then $(\rho_t)_{t\ge 0}$ is described by the SDE 
\begin{align}\label{eq:SDErho}
\rho_t=\int_0^t\rho^{-1}_s\ud s+\beta^0_t, \quad t\ge 0,
\end{align}
where $(\beta^0_t)_{t\ge 0}$ is a Brownian motion. Moreover, Pitman's theorem also tells us that defining $J_t=\inf_{s\ge t}\rho_s$, the following equality in law holds
\begin{align}\label{eq:pitman}
\big(\rho_t,J_t\big)_{t\ge 0}=\big(2\bar W_t-W_t,\bar W_t\big)_{t\ge 0}.
\end{align}
We emphasise that $\rho_t>0$ for all $t>0$. Moreover, $\rho_t=|\widehat B_t|$ where $\widehat B$ is a 3-dimensional Brownian motion and here $|\cdot|$ denotes the Euclidean norm in $\R^3$. Then, writing the integral in spherical coordinates, it is not hard to verify that 
\begin{align}\label{eq:rhoint}
\begin{aligned}
\E\Big[\Big(\frac{1}{\rho_1}\Big)^p\Big]=\frac{2}{(2\pi)^{\frac12}}\int_0^\infty r^{2-p}\e^{-\frac{r^2}{2}}\ud r<\infty,\quad\text{for $p\in[1,3)$}.
\end{aligned}
\end{align}
Since $\rho_t=\sqrt{t}\rho_1$ in law, the result above yields $\E[\rho_t^{-p}]<\infty$ for $p\in[1,3)$.

Denoting $\cF^\rho_t:=\sigma(\rho_s,\,s\le t)$ it can be shown that $\P(J_t\le u|\cF^\rho_t)=(u\wedge\rho_t)/\rho_t$, for any $t>0$ (cf.\ \cite[Cor.\ VI.3.6]{revuz2013continuous}), i.e., the law of $J_t$ conditional upon $\cF^\rho_t$ is uniform on $[\![0,\rho_t]\!]$. This result extends to stopping times for the filtration $(\cF^\rho_t)_{t\ge 0}$ as follows: let $\tau$ be a bounded $(\cF^\rho_t)_{t\ge 0}$-stopping time and let $(\tau_n)_{n\in\N}$ be a sequence of $(\cF^\rho_t)_{t\ge 0}$-stopping times, taking finitely many values $\tau_n\in\{t_0,t_1,\ldots, t_n\}$ and decreasing to $\tau$ when $n\to\infty$; then, $J_{\tau_n}\downarrow J_\tau$ by continuity of $t\mapsto \rho_t$ and, for any $A\in\cF^\rho_\tau$
\begin{align*}
\E\big[1_A \1_{\{J_\tau\ge u\}}\big]&=\lim_{n\to\infty} \E\big[1_A \1_{\{J_{\tau_n}\ge u\}}\big]=\lim_{n\to\infty} \sum_{j=0}^{n}\E\big[1_A \E[\1_{\{J_{t_j}\ge u\}}\big|\cF^\rho_{t_j}\big]\1_{\{\tau_n=t_j\}}\big]\\
&=\lim_{n\to\infty} \sum_{j=0}^{n}\E\big[1_A\big(1-u/\rho_{t_j}\big)^+\1_{\{\tau_n=t_j\}}\big]=\lim_{n\to\infty} \E\big[1_A\big(1-u/\rho_{\tau_n}\big)^+\big]\\
&=\E\big[1_A\big(1-u/\rho_{\tau}\big)^+\big];
\end{align*}
the latter implies 
\begin{align}\label{eq:unifJ}
\P(J_\tau\ge u|\cF^\rho_\tau)= (1-u/\rho_{\tau})^+,\quad u\geq 0,
\end{align}
for any bounded $(\cF^\rho_t)_{t\ge 0}$-stopping time $\tau$, as needed.
  
With the notation introduced above, we can now use Pitman's theorem to obtain the desired representation of $V_h(t)$ (cf.\ \eqref{eq:Vh}).
\begin{proposition}\label{prop:vpitman}
Let $\cR_f=(0,T_1)\times(y_1,y_2)$ be such that $y_1<c(t)<y_2$ for all $t\in[0,T_1]$. 
Then, 
\begin{align}\label{eq:dot.w.2}
\begin{aligned}
V_h(t)&=\E\big[\1_{B^{h}_1}L^{h}_{t,T_1-t}D^h_{t,T_1-t}\dot w\big(T_1,h\!+\!c(T_1)\!+\!\rho_{T_1-t}\!-\!2J_{T_1-t}\big)\!+\!\1_{B^{h}_2}L^{h}_{t,\theta_h}D^h_{t,\theta_h}\dot w\big(t\!+\!\theta_h,y_2\big)\big]\\
&\quad+\int_0^{T_1-t}\!\!\E\big[\1_{B^{h}_s}  L^{h}_{t,s}D^h_{t,s}F_{t,s}\big(h\!+\!\rho_s-\!2J_s\big)\big]\ud s,
\end{aligned}
\end{align}
where $\theta_h=\inf\{s\in[0,\infty):h+\rho_s-2J_s+c((t+s)\wedge T_1)\ge y_2\}$, the events $B^h_1$, $B^h_2$ and $B^h_s$ are defined as 
\begin{align*}
\begin{aligned}
B^h_1\!=\!\big\{J_{T_1-t}\!\le \!h\big\}\!\cap\!\big\{\theta_h\!\ge\! T_1-t\big\},\quad B^h_2\!=\!\big\{J_{\theta_h}\!\le\! h\big\}\!\cap\!\big\{\theta_h\!<\!T_1-t\big\},\quad B^h_s\!=\!\big\{J_{s}\!\le \!h\big\}\!\cap\!\big\{\theta_h\!\ge\! s\big\},
\end{aligned}
\end{align*}
and the processes $s\mapsto L^{h}_{t,s}$ and $s\mapsto D^{h}_{t,s}$ are defined as
\begin{align*}
\begin{aligned}
&L^{h}_{t,s}=\exp\Big(\int_0^{s}\!\!\gamma_{t,v}(h+\rho_v-2J_v)\ud (\rho_v-2J_v)\!-\!\tfrac12\int_0^{s}\!\!\big(\gamma_{t,v}(h+\rho_v-2J_v)\big)^2\ud v\Big),\\
&D^h_{t,s}=\exp\Big(-\int_0^s R_{t,v}(h+\rho_v-2J_v)\ud v\Big),
\end{aligned}
\end{align*} 
for $s\in[\![0,\theta_h\wedge\vartheta^*_h]\!]$, with
$\vartheta^*_h\coloneqq\inf\{s\in[0,T_1-t): h+\rho_s-2J_s \le 0\}\wedge(T_1-t)$.
\end{proposition}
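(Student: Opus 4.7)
The plan is to apply Pitman's theorem to transform the representation \eqref{eq:dot.w.1} of $V_h(t)=\dot w(t,h+c(t))$, which is a path functional of the Brownian motion $W$ under $\widetilde\P$, into an equivalent expression involving the three-dimensional Bessel process $\rho$ and its future infimum $J_s=\inf_{v\ge s}\rho_v$ under $\P$. Inverting \eqref{eq:pitman} gives the equality in law
\begin{equation*}
(W_s,\bar W_s)_{s\ge 0}\stackrel{d}{=}(2J_s-\rho_s,J_s)_{s\ge 0},
\end{equation*}
so $h-W_v$ corresponds to $h+\rho_v-2J_v$. Once every ingredient in \eqref{eq:dot.w.1} is rewritten as a measurable functional of the pair $(W,\bar W)$, the two expectations coincide and the identity \eqref{eq:dot.w.2} follows.

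The translation of events and stopping times is straightforward. The constraint $h-W_v>0$ for all $v\in[0,s]$ is equivalent to $\bar W_s<h$, which in the Pitman picture becomes $J_s\le h$ (the boundary case $J_s=h$ has $\P$-probability zero). The constraint $h-W_v<y_2-c(t+v)$ for all $v\in[0,s]$ is equivalent to $\tau^h_2>s$ with $\tau^h_2$ as in \eqref{eq:tau2h}, and this becomes $\theta_h>s$. Extending $c$ to be constant equal to $c(T_1)$ for $s>T_1-t$ makes $\theta_h$ well-defined on $[0,\infty)$ without affecting the events $\{\theta_h\ge T_1-t\}$, $\{\theta_h<T_1-t\}$ or $\{\theta_h\ge s\}$ for $s\le T_1-t$. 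Hence $\widetilde A^h_1,\widetilde A^h_2,\widetilde A^h_s$ correspond in law to $B^h_1,B^h_2,B^h_s$, and $\tau^h_2$ to $\theta_h$.

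The main technical obstacle is the It\^o integral inside $\widetilde L^h_{t,s}$, which a priori is defined only up to $\widetilde\P$-null sets. To turn it into a pathwise functional I introduce the primitive $\Gamma_{t,v}(\xi)=\int_0^\xi\gamma_{t,v}(\eta)\,\ud\eta$ and apply It\^o's formula to $\Gamma_{t,s}(h-W_s)$ under $\widetilde\P$, obtaining
\begin{equation*}
-\!\int_0^s\!\gamma_{t,v}(h-W_v)\,\ud W_v=\Gamma_{t,s}(h-W_s)-\Gamma_{t,0}(h)-\!\int_0^s\!\bigl(\dot\Gamma_{t,v}+\tfrac12\partial_\xi\gamma_{t,v}\bigr)(h-W_v)\,\ud v,
\end{equation*}
an identity that exhibits the stochastic integral as a measurable functional of the path of $W$. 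Substituting $W_v\leftrightarrow 2J_v-\rho_v$ and then reversing the It\^o expansion on the Bessel side — noting that $\rho-2J$ is a semimartingale whose quadratic variation equals $\langle\rho\rangle_s=s$, because $J$ is of bounded variation — produces exactly $\int_0^s\gamma_{t,v}(h+\rho_v-2J_v)\,\ud(\rho_v-2J_v)$, matching $L^h_{t,s}$ in the statement. The Riemann integrals in $\widetilde D^h_{t,s}$ and in the $F_{t,s}$-term are already pathwise and translate by direct substitution.

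Combining the three ingredients, each expectation in \eqref{eq:dot.w.1} becomes, after the Pitman substitution, the corresponding expectation in \eqref{eq:dot.w.2}, which proves the proposition. I expect the principal delicacy to lie in the path-functional identification of the stochastic integral; once this is settled, the remaining reduction is systematic bookkeeping under the substitution $h-W_\cdot\leftrightarrow h+\rho_\cdot-2J_\cdot$.
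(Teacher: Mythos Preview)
Your overall strategy is the same as the paper's: rewrite each ingredient of \eqref{eq:dot.w.1} as a functional of $(W,\bar W)$ and then invoke the Pitman identity in law $(W,\bar W)\stackrel{d}{=}(2J-\rho,J)$. The translation of the events $\widetilde A^h_i$ into $B^h_i$ and of $\tau^h_2$ into $\theta_h$ is handled exactly as the paper does.

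Where you go beyond the paper is in worrying about the stochastic integral inside $\widetilde L^h_{t,s}$, and this concern is legitimate --- the paper's proof simply asserts the transfer without comment. However, your specific device (apply the classical It\^o formula to $\Gamma_{t,s}(h-W_s)$ with $\Gamma_{t,v}(\xi)=\int_0^\xi\gamma_{t,v}(\eta)\,\ud\eta$) has a regularity gap. The time derivative $\dot\Gamma_{t,v}$ contains the term $-\xi\,\ddot c(t+v)$, but at this stage of the argument $c$ is only Lipschitz (indeed the whole purpose of the paper is to upgrade Lipschitz to $C^1$), so $\ddot c$ is unavailable. A secondary issue is that $\partial_\xi\gamma_{t,v}$ involves $\sigma_{xx}$, and Assumption~\ref{ass:main}(iv) only gives $\sigma_x$ Lipschitz, hence $\sigma_{xx}$ merely a.e. Thus the $C^{1,2}$ hypothesis of It\^o's formula fails.

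The clean fix, which the paper records in the paragraph immediately following the proposition, is that $X_s\coloneqq 2J_s-\rho_s$ is itself a Brownian motion for the filtration $(\cF^{\rho,J}_s)_{s\ge0}$, with $J_s=\sup_{u\le s}X_u$. Hence on the Bessel side $\int_0^s\gamma_{t,v}(h+\rho_v-2J_v)\,\ud(\rho_v-2J_v)=-\int_0^s\gamma_{t,v}(h-X_v)\,\ud X_v$ is a \emph{bona fide} It\^o integral with respect to a Brownian motion, not merely a semimartingale integral. Since the It\^o integral of a path-adapted integrand against a Brownian motion is determined (in law, jointly with the path) by the law of that Brownian motion, the equality in law $(W,\bar W)\stackrel{d}{=}(X,J)$ transfers the full expectation without any need for a pathwise representation of the stochastic integral. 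You could salvage your argument by splitting off the deterministic piece $\int_0^s\dot c(t+v)\,\ud W_v$ as a Wiener integral and handling the smooth remainder via It\^o's formula, but invoking the Brownian property of $2J-\rho$ is both shorter and what the paper ultimately relies on.
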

\begin{proof}
Let us start by noticing that the events $\widetilde A^h_1$, $\widetilde A^h_2$ and $\widetilde A^h_s$ can be rewritten $\P$-a.s.\ in terms of the pair $(W_t,\bar W_t)$ as 
\begin{align*}
\begin{aligned}
&\widetilde A^h_1=\big\{\bar W_{T_1-t}\le h\big\}\cap\big\{h-W_s+c(t+s)<y_2,\,\forall s\in[0,T_1-t]\big\},\\
&\widetilde A^h_2=\big\{\bar W_{\tau^h_2}\le h\big\}\cap\big\{\tau^h_2<T_1-t\big\},\\
&\widetilde A^h_s=\big\{\bar W_{s}\le h\big\}\cap\big\{h-W_v+c(t+v)<y_2,\,\forall v\in[0,s]\big\}.
\end{aligned}
\end{align*}
From \eqref{eq:pitman} we know that $(W,\bar W)=(2J-\rho,J)$ in law. Then, it is clear that the events $B^h_1$, $B^h_2$ and $B^h_s$ are the analogue of the ones above but for the pair $(2J-\rho,J)$. Likewise the stopping times $\theta_h$ and $\vartheta^*_h$ are the analogue of $\tau^h_2$ and $\sigma^h_*$ (cf.\ Remark \ref{rem:P1}) and the processes $s\mapsto L^h_{t,s},D^h_{t,s}$ are the analogue of the processes $s\mapsto \widetilde L^h_{t,s},\widetilde D^h_{t,s}$.

Invoking \eqref{eq:pitman} we can rewrite \eqref{eq:dot.w.1} as in \eqref{eq:dot.w.2}.
The use of the measure $\P$ is with no loss of generality as long as the Bessel process is defined on the original space $(\Omega,\cF,\P)$.
\end{proof}

For the ease of exposition, in what follows we rewrite the expression for \eqref{eq:dot.w.2} as 
\begin{align}\label{eq:Vht.2}
V_h(t)=V^h_1(t)+V^h_2(t)+\int_0^{T_1-t} V^h_s(t)\ud s,
\end{align} 
with
\begin{align}\label{eq:V1h}
\begin{aligned}
&V_1^h(t)\coloneqq\E\big[\1_{B^{h}_1}L^{h}_{t,T_1-t}D^h_{t,T_1-t}\dot w\big(T_1,c(T_1)\!+\!h\!+\!\rho_{T_1-t}\!-\!2J_{T_1-t}\big)\big],\\
&V_2^h(t)\coloneqq\E\big[\1_{B^{h}_2}L^{h}_{t,\theta_h}D^h_{t,\theta_h}\dot w\big(t\!+\!\theta_h,y_2\big)\big], \\
&V_s^h(t)\coloneqq\E\big[\1_{B^{h}_s} L^{h}_{t,s} D^h_{t,s}F_{t,s}\big(h\!+\!\rho_s-\!2J_s\big)\big].
\end{aligned}
\end{align}
One of the key steps in the proof of Theorem \ref{thm:main} is to obtain an expansion up to order $o(h)$ for each term on the right-hand side of \eqref{eq:Vht.2}. In order to do that we need some preliminary estimates on the process $(\rho_t)_{t\ge 0}$ and some related stopping times. However, before proceeding further the next remark simplifies substantially our analysis.
\begin{remark}\label{rem:gammaR}
Let $\tau\in[0,T_1-t]$ be a random time. On the event $\{J_\tau\leq h\}\cap\{\tau\leq \theta_h\}$, we have $c(t+v)-h\leq h+\rho_v-2J_v+c(t+v)<y_2$, for $v\in[\![0,\tau)\!)$. Moreover, we can assume $c(t+v)-h\geq y_1$ because $y_1<c(s)$ for all $s\in[0,T_1]$ and we are interested in the limit as $h\downarrow 0$. Thus, with no loss of generality we can replace the functions $\gamma(t,y)$, $R(t,y)$,
 $F(t,y)$ with bounded and Lipschitz-continuous functions on $\R^2$ which coincide with the original ones on $\overline \cR_f$. Such replacement will be tacitly assumed throughout the paper, i.e., we assume with no loss of generality that $\gamma(t,y)$, $R(t,y)$,
 $F(t,y)$ are bounded and Lipschitz-continuous. For the same reason we can assume $\dot w(t,y)$ to be bounded and uniformly continuous.
\end{remark}

\subsection{Key estimates for 3-dimensional Bessel processes}
Here we prove a few simple but important technical lemmas. Throughout this section $(\rho_t)_{t\ge 0}$ will always denote a 3-dimensional Bessel process starting from zero and $J_t=\inf_{s\ge t}\rho_s$.
\begin{lemma}\label{lem:3dBp1}
Let $\varphi:[0,\infty)\to\R$ be a continuous function which is bounded from below, with $\varphi(0)<0$ and $\varphi(t)-\varphi(s)\ge -c_\varphi (t-s)$ for $0\le s<t$ and some constant $c_\varphi>0$. For $h\in(0,-\varphi(0))$ let
\begin{align}\label{eq:tauphi}
\tau^{\pm h}_\varphi\coloneqq\inf\{s\ge 0:\rho_s+\varphi(s)\ge\pm h \}.
\end{align}
Then, there exists a Brownian motion $(\beta_t)_{t\ge 0}$, independent of $\cF^\rho_{\tau^{-h}_\varphi}$ such that $\tau^{+h}_{\varphi}\le \tau^{-h}_\varphi+\sigma^{\beta}_{2h}$ on the event $\{\tau^{-h}_\varphi<\infty\}$ with 
\begin{align}\label{eq:sigbeta}
\sigma^{\beta}_{2h}\coloneqq\inf\{s\ge 0:\beta_s\ge 2h+c_\varphi s\}.
\end{align}
\end{lemma}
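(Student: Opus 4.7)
The plan is to control the growth of $s\mapsto \rho_s+\varphi(s)$ after time $\tau^{-h}_\varphi$ from below by a deterministic-drift Brownian motion, and then read off the claimed stochastic inequality. First, I would observe that on $\{\tau^{-h}_\varphi<\infty\}$ the continuity of paths gives $\rho_{\tau^{-h}_\varphi}+\varphi(\tau^{-h}_\varphi)=-h$. Writing $s=\tau^{-h}_\varphi+t$ for $t\ge 0$ and invoking the hypothesis $\varphi(s)-\varphi(\tau^{-h}_\varphi)\ge -c_\varphi t$ together with the SDE \eqref{eq:SDErho} (note $\rho_u^{-1}\ge 0$), I get the pathwise lower bound
\[
\rho_s+\varphi(s)\ge -h+\bigl(\beta^0_s-\beta^0_{\tau^{-h}_\varphi}\bigr)-c_\varphi t.
\]

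Next I would define $\beta_t\coloneqq \beta^0_{\tau^{-h}_\varphi+t}-\beta^0_{\tau^{-h}_\varphi}$ and argue that $(\beta_t)_{t\ge 0}$ is a Brownian motion independent of $\cF^{\rho}_{\tau^{-h}_\varphi}$. The key point is that the natural filtrations of $\rho$ and of its driving Brownian motion $\beta^0$ coincide: indeed \eqref{eq:SDErho} can be inverted as $\beta^0_t=\rho_t-\int_0^t\rho_u^{-1}du$, and the integral is a.s.\ finite on bounded intervals since $\rho_t-\beta^0_t=\int_0^t\rho_u^{-1}du$. Hence $\cF^{\rho}_{\tau^{-h}_\varphi}=\cF^{\beta^0}_{\tau^{-h}_\varphi}$, and the strong Markov property of Brownian motion delivers the desired independence and distributional property of $\beta$.

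Combining the two steps, whenever $\beta_t\ge 2h+c_\varphi t$ one has $\rho_s+\varphi(s)\ge h$, so $\tau^{+h}_\varphi\le \tau^{-h}_\varphi+t$. Choosing $t=\sigma^\beta_{2h}$ (and trivially handling $\sigma^\beta_{2h}=\infty$) yields the claim.

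The main subtlety I anticipate is the filtration-identification step: one must be careful that $\beta$ is independent of $\cF^\rho_{\tau^{-h}_\varphi}$ and not merely of $\cF^{\beta^0}_{\tau^{-h}_\varphi}$, because the later analysis (e.g.\ conditioning on the Bessel trajectory up to $\tau^{-h}_\varphi$) is stated in terms of $\cF^\rho$. Once the inversion $\beta^0_t=\rho_t-\int_0^t\rho_u^{-1}du$ is justified by the a.s.\ finiteness of $\int_0^t\rho_u^{-1}du$ (which follows directly from the SDE, and which is also consistent with the integrability bound \eqref{eq:rhoint}), the two filtrations agree and the rest is a routine application of the strong Markov property. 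The remainder of the argument is a pointwise pathwise comparison and requires no further estimate.
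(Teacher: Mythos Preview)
Your proposal is correct and follows essentially the same route as the paper: restart the SDE \eqref{eq:SDErho} at $\tau^{-h}_\varphi$, drop the nonnegative drift $\rho_u^{-1}$, use the one-sided Lipschitz bound on $\varphi$, and define $\beta_t=\beta^0_{\tau^{-h}_\varphi+t}-\beta^0_{\tau^{-h}_\varphi}$. Your explicit justification that $\cF^\rho_t=\cF^{\beta^0}_t$ (via the inversion $\beta^0_t=\rho_t-\int_0^t\rho_u^{-1}\,\ud u$) to obtain independence of $\beta$ from $\cF^\rho_{\tau^{-h}_\varphi}$ is a welcome addition that the paper leaves implicit.
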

\begin{proof}
By continuity of $t\mapsto(\rho_t,\varphi(t))$ we have $\P(\tau^{-h}_\varphi>0)=1$, because $h\in(0,-\varphi(0))$. Moreover, $\P(\tau^{-h}_\varphi< \tau^{+h}_\varphi)=1$ and we can write
\begin{align}\label{eq:tauh+}
\tau^{+h}_\varphi=\tau^{-h}_\varphi+\inf\big\{s\ge 0:\rho_{\tau^{-h}_\varphi+s}+\varphi(\tau^{-h}_\varphi+s)\ge h\big\}.
\end{align}
Upon noticing also that $\rho_{\tau^{-h}_\varphi}+\varphi(\tau^{-h}_\varphi)=-h$ on the event $\{\tau^{-h}_\varphi<\infty\}$, from the SDE for the Bessel process (cf.\ \eqref{eq:SDErho}) we obtain
\begin{align}\label{eq:rhodyn}
\begin{aligned}
\rho_{\tau^{-h}_\varphi+s}&=\rho_{\tau^{-h}_\varphi}+\int_{\tau^{-h}_\varphi}^{\tau^{-h}_\varphi+s}\rho^{-1}_u\ud u+\beta^0_{\tau^{-h}_\varphi+s}-\beta^0_{\tau^{-h}_\varphi}\ge -h-\varphi(\tau^{-h}_\varphi)+\beta_s,
\end{aligned}
\end{align}
with $\beta_s\coloneqq \beta^0_{\tau^{-h}_\varphi+s}-\beta^0_{\tau^{-h}_\varphi}$. It is clear that $(\beta_s)_{s\ge 0}$ is a $(\cF^\rho_{\tau^{-h}_\varphi+s})_{s\ge 0}$-Brownian motion, independent of $\cF^\rho_{\tau^{-h}_\varphi}$. Plugging \eqref{eq:rhodyn} into \eqref{eq:tauh+} yields
\begin{align*}
\tau^{+h}_\varphi&\le\tau^{-h}_\varphi+\inf\big\{s\ge 0:-h-\varphi(\tau^{-h}_\varphi)+\beta_s+\varphi(\tau^{-h}_\varphi+s)\ge h\big\}\\
&\le\tau^{-h}_\varphi+\inf\big\{s\ge 0:\beta_s\ge 2 h+c_\varphi s\big\},
\end{align*}
where in the second inequality we use $\varphi(\tau^{-h}_\varphi+s)-\varphi(\tau^{-h}_\varphi)\ge -c_\varphi s$.
\end{proof}

\begin{lemma}\label{lem:convth}
With the same notation and assumptions as in Lemma \ref{lem:3dBp1} we have, for any (deterministic) $S\in(0,\infty)$ and any sequence $(h_n)_{n\in\N}\subset(0,\infty)$ such that $h_n\downarrow 0$ as $n\to\infty$
\begin{align*}
\lim_{n\to \infty}\tau_\varphi^{-h_n}=\tau^0_\varphi\quad\text{and}\quad \lim_{n\to \infty}\big(\tau^{-h_n}_\varphi \1_{\{\tau_\varphi^{-h_n}<S\}}\big)=\tau^{0}_\varphi \1_{\{\tau_\varphi^{0}<S\}},\quad \P-a.s.,
\end{align*}
where $\tau^0_\varphi\coloneqq\inf\{s\ge 0:\rho_s+\varphi(s)\ge 0\}$.
\end{lemma}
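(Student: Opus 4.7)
The plan is to handle the two convergences separately, both relying on monotonicity of $h\mapsto\tau^{-h}_\varphi$ combined with continuity of the Bessel paths. First I would observe that $h'\le h$ implies $\{s:\rho_s+\varphi(s)\ge -h\}\supseteq \{s:\rho_s+\varphi(s)\ge -h'\}$, so $\tau^{-h}_\varphi\le \tau^{-h'}_\varphi\le \tau^0_\varphi$. Since $h_n\downarrow 0$, the sequence $(\tau^{-h_n}_\varphi)_n$ is non-decreasing and bounded above by $\tau^0_\varphi$, hence it admits a $\P$-a.s.\ limit $L\in[0,\infty]$ with $L\le \tau^0_\varphi$.

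To obtain the reverse inequality $L\ge \tau^0_\varphi$, I would split on the events $\{L=\infty\}$ and $\{L<\infty\}$. On $\{L=\infty\}$ nothing is to prove. On $\{L<\infty\}$, continuity of $t\mapsto \rho_t$ and of $\varphi$ guarantees that the hitting level is attained, i.e., $\rho_{\tau^{-h_n}_\varphi}+\varphi(\tau^{-h_n}_\varphi)=-h_n$ for every $n$ large enough that $h_n<-\varphi(0)$ (so that $\tau^{-h_n}_\varphi>0$). Letting $n\to\infty$ and using continuity of paths gives $\rho_L+\varphi(L)=0$, so $\tau^0_\varphi\le L$, and therefore $L=\tau^0_\varphi$ on this event as well. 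This settles the first convergence.

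For the second convergence I would argue pathwise on the full measure set where the first convergence holds. On $\{\tau^0_\varphi<S\}$ one has $\tau^{-h_n}_\varphi<S$ for all $n$ large enough, so the left-hand side coincides with $\tau^{-h_n}_\varphi$ eventually and hence converges to $\tau^0_\varphi=\tau^0_\varphi\1_{\{\tau^0_\varphi<S\}}$; on $\{\tau^0_\varphi>S\}$ the monotone convergence $\tau^{-h_n}_\varphi\uparrow \tau^0_\varphi$ forces $\tau^{-h_n}_\varphi>S$ for all $n$ large enough, so both sides are eventually zero. The only problematic case is $\{\tau^0_\varphi=S\}$, on which the right-hand side vanishes but the left-hand side could conceivably approach $S$ from below. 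The one delicate point of the proof is therefore to show $\P(\tau^0_\varphi=S)=0$: by path continuity, $\{\tau^0_\varphi=S\}\subseteq\{\rho_S=-\varphi(S)\}$, and since $S>0$ the random variable $\rho_S$, having the law of $\sqrt{S}\,\rho_1$, admits a density on $(0,\infty)$ (this is the same density formula already exploited in \eqref{eq:rhoint}), so $\P(\rho_S=-\varphi(S))=0$. Combining the three cases yields the claimed a.s.\ limit.
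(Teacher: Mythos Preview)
Your proof is correct and follows essentially the same route as the paper's: monotonicity of $h\mapsto\tau^{-h}_\varphi$ gives existence of the limit $L\le\tau^0_\varphi$, continuity of $(\rho,\varphi)$ at the hitting time gives the reverse inequality, and the null event $\{\tau^0_\varphi=S\}$ is handled via the density of $\rho_S$. The only cosmetic difference is that the paper observes at the outset that $\rho_t\to\infty$ forces $\P(\tau^0_\varphi<\infty)=1$, which renders your $\{L=\infty\}$ case vacuous; and for the second limit the paper packages your three-case analysis as the set convergence $\{\tau^{-h_n}_\varphi<S\}\downarrow\{\tau^0_\varphi\le S\}$ modulo the null set $\{\tau^0_\varphi=S\}$.
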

\begin{proof}
First we notice that $\P(\tau^0_\varphi<\infty)=1$, because $\rho_t\to\infty$ as $t\to\infty$, $\P$-a.s. By definition we have $\tau^{-h}_\varphi\le \tau^0_\varphi$, for every $h\in(0,|\varphi(0)|)$, $\P$-a.s. Moreover, for $h'\le h$ we have $\tau^{-h}_\varphi\le \tau^{-h'}_\varphi$, $\P$-a.s. Then, the limit $\tau_0\coloneqq\lim_{n\to\infty}\tau^{-h_n}_\varphi$ is well-defined and $\tau_0\le \tau^0_\varphi$, $\P$-a.s.

For any $n\in\N$, it holds $\rho_{\tau^{-h_n}_\varphi}+\varphi(\tau^{-h_n}_\varphi)\ge -h_n$. Then by continuity of $t\mapsto(\rho_t,\varphi(t))$, letting $n\to\infty$ yields $\rho_{\tau_0}+\varphi(\tau_0)\ge 0$. Thus proving $\tau_0\ge \tau^0_\varphi$. This shows the first limit in the statement of the lemma.

For the second limit we notice that $\{\tau^{-h_n}_\varphi<S\}\downarrow\{\tau^0_\varphi<S\}\cup\{\tau^0_\varphi=S\}$ as $n\to\infty$. However, $\P(\tau^0_\varphi=S)\le \P(\rho_S=\varphi(S))=0$. Then, the second limit in the statement of the lemma follows. 
\end{proof}
For the next lemma we observe that if $\varphi$ is Lipschitz we can write
\[
\tau^{-h}_\varphi=\inf\Big\{s\ge 0:\rho_s+\int_0^s\dot\varphi(u)\ud u\ge -h-\varphi(0)\Big\}.
\]
Then we can obtain a bound on the distribution of $\tau^{-h}_\varphi$, uniformly with respect to $h+\varphi(0)$.
\begin{lemma}\label{lem:pbdtauh}
With the same notation as in Lemma \ref{lem:3dBp1}, assume that $\varphi$ is Lipschitz continuous with $\|\dot\varphi \|_\infty\leq c_\varphi$ for some $c_\varphi>0$ and fix $T_1>0$.
There exists a constant $C$ depending only on $T_1$ and $c_\varphi$ (and not depending on $h+\varphi(0)$) such that,
for any $0< t_1<t_2<T_1$, we have
\begin{align}\label{eq:lim1}
\P\left(\tau^{-h}_\varphi\in [t_1,t_2]\right)\leq \frac{C}{\sqrt{t_1}}\sqrt{t_2-t_1}.
\end{align}
\end{lemma}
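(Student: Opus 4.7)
The plan is to combine the strong Markov property at time $t_1$ with a Gaussian tail bound for a moving-barrier hitting probability, and to integrate the result against the density of $\rho_{t_1}$. Writing $k(s)\coloneqq -h-\varphi(s)$ we have $\tau^{-h}_\varphi=\inf\{s\ge 0:\rho_s\ge k(s)\}$, and $k$ inherits the Lipschitz constant $c_\varphi$ of $\varphi$. On the event $\{\tau^{-h}_\varphi>t_1\}$ necessarily $\rho_{t_1}<k(t_1)$, so the strong Markov property gives
\begin{align*}
\P\big(\tau^{-h}_\varphi\in[t_1,t_2]\big)\le \E\big[\1_{\{\rho_{t_1}<k(t_1)\}}\,q(\rho_{t_1})\big],\quad q(x)\coloneqq \P_x\big(\tilde\tau\le \Delta\big),\ \Delta\coloneqq t_2-t_1,
\end{align*}
where $\tilde\tau$ denotes the first hitting time of the barrier $s\mapsto k(t_1+s)$ by a 3-dimensional Bessel process started at $x$.

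The next step is a uniform bound on $q(x)$. Using the representation $\tilde\rho_s=|xe_1+\widehat B_s|$ (for $\widehat B$ a 3-dimensional Brownian motion from $0$ and $e_1$ the first basis vector of $\R^3$), the triangle inequality yields $\tilde\rho_s-x\le|\widehat B_s|$, whereas the Lipschitz property of $k$ gives $k(t_1+s)-x\ge y-c_\varphi s$ for $y\coloneqq k(t_1)-x>0$. Hence $\tilde\tau\le\Delta$ forces $\sup_{s\le\Delta}|\widehat B_s|\ge y-c_\varphi\Delta$, and a componentwise reflection principle applied to the three coordinates of $\widehat B$ delivers
\begin{align*}
q(x)\le g(y)\coloneqq \min\!\big(1,\;12\exp\!\big(-(y-c_\varphi\Delta)_+^2/(6\Delta)\big)\big).
\end{align*}
A direct one-dimensional integration, together with $\Delta\le T_1$, gives $\int_0^\infty g(y)\,\ud y\le C_*\sqrt\Delta$ for a constant $C_*$ depending only on $c_\varphi$ and $T_1$.

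The last step is to integrate against the density of $\rho_{t_1}$. Changing variables to $y=k(t_1)-x$ and using the explicit Bessel density $p_{t_1}(r)=\sqrt{2/\pi}\,r^2 t_1^{-3/2}\e^{-r^2/(2t_1)}$, which attains its maximum at $r=\sqrt{2t_1}$ and therefore satisfies $\|p_{t_1}\|_\infty\le C/\sqrt{t_1}$, we obtain
\begin{align*}
\P\big(\tau^{-h}_\varphi\in[t_1,t_2]\big)\le \int_0^\infty g(y)\,p_{t_1}\big(k(t_1)-y\big)\,\ud y\le \|p_{t_1}\|_\infty\int_0^\infty g(y)\,\ud y\le \frac{C\,C_*}{\sqrt{t_1}}\sqrt{t_2-t_1}.
\end{align*}
The main obstacle is securing uniformity in $h+\varphi(0)$, equivalently in the barrier height $k(t_1)$: it is precisely the sup-norm bound on $p_{t_1}$ that absorbs the $k$-dependence of the upper limit in the $x$-integral into the universal estimate on $\int_0^\infty g(y)\,\ud y$, yielding a constant that depends only on $T_1$ and $c_\varphi$.
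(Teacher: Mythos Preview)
Your proof is correct and follows essentially the same structure as the paper's: condition on the position at time $t_1$, bound the Bessel increment via the triangle inequality $\rho_{t_1+s}-\rho_{t_1}\le |\widehat B_{t_1+s}-\widehat B_{t_1}|$, and then exploit the uniform bound $\|p_{t_1}\|_\infty\le C/\sqrt{t_1}$ on the density of $\rho_{t_1}$. The only cosmetic difference is in the final integration: you bound $\P(\sup_{s\le\Delta}|\widehat B_s|\ge y-c_\varphi\Delta)$ by an explicit Gaussian tail and integrate it in $y$, whereas the paper applies the layer-cake identity $\int_0^\infty \P(S\ge z)\,\ud z=\E[S]$ with $S=\sup_{s\le\Delta}(\tilde\rho_s+c_\varphi s)$ and then the Bessel scaling $\tilde\rho_s=\sqrt\Delta\,\tilde\rho_{s/\Delta}$ to read off the $\sqrt\Delta$ factor directly---a slightly cleaner route that avoids the componentwise reflection bound, but with no material difference in the argument.
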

\begin{proof}
For notational convenience, let $\delta=t_2-t_1$ and $h_\varphi=h+\varphi(0)$.
On the event $\{\tau^{-h}_\varphi\in [t_1,t_2]\}$, we have $\rho_{t_1}+\int_0^{t_1}\dot\varphi(u)\ud u\leq -h_\varphi$ and
there exists $s\in [0,\delta]$ such that $\rho_{t_1+s}+\int_0^{t_1+s}\dot\varphi(u)\ud u\geq -h_{\varphi}$. That is equivalent to
\[
\rho_{t_1}\le -\tilde h_{\varphi}\quad\text{and}\quad\rho_{t_1+s}-\rho_{t_1}+\int_{t_1}^{t_1+s}\dot\varphi(u)\ud u\geq -\tilde h_\varphi -\rho_{t_1},
\]
where $\tilde h_\varphi= h_\varphi+\int_0^{t_1}\dot\varphi(u)\ud u$. Recall that $\rho_s=|\widehat B_s|$ where $\widehat B$ is a three dimensional Brownian motion. Then, $\rho_{t_1+s}-\rho_{t_1}=|\widehat B_{t_1+s}|-|\widehat B_{t_1}|\leq |\widehat B_{t_1+s}-\widehat B_{t_1}|\eqqcolon \tilde \rho_s$.
It thus follows
\begin{align*}
\P\left(\tau^{-h}_\varphi\in [t_1,t_2]\right)&\leq\P\left(\rho_{t_1}\leq -\tilde h_\varphi, \sup_{0\le s\le \delta}
\left(\tilde \rho_s+\int_{t_1}^{t_1+s}\dot\varphi(u)\ud u\right)\geq -\tilde h_\varphi-\rho_{t_1}\right)\\
&\le \P\left(\rho_{t_1}\leq -\tilde h_\varphi, \sup_{0\le s\le \delta}
\left(\tilde \rho_s+c_\varphi s\right)\geq -\tilde h_\varphi-\rho_{t_1}\right),
\end{align*}
where we used $|\dot\varphi(u)|\leq c_\varphi$. Since  $(\tilde\rho_s)_{s\ge 0}$ is another Bessel process independent of $\cF^\rho_{t_1}$, then by tower property
\begin{align*}
\P\left(\tau^{-h}_\varphi\in [t_1,t_2]\right)&\leq\E\Big[\1_{\{\rho_{t_1}\leq -\tilde h_\varphi\}}\P\Big(\sup_{0\le s\le \delta}
\left(\tilde \rho_s+c_\varphi s\right)\geq -\tilde h_\varphi-\rho_{t_1}\Big|\cF^\rho_{t_1}\Big)\Big]\\
     &=\int_0^{(-\tilde h_\varphi)^+}\P(\rho_{t_1}\in \ud y)\P\left(\sup_{0\leq s \leq \delta}\left(\tilde \rho_s+c_\varphi s\right)\geq (-\tilde h_\varphi)^+-y\right)\\
     &=\int_0^{(-\tilde h_\varphi)^+}\sqrt{\frac{2}{\pi t_1}}\frac{y^2}{t_1}\e^{-\frac{y^2}{2t_1}}\P\left(\sup_{0\leq s \leq \delta}\left(\tilde \rho_s+c_\varphi s\right)\geq (-\tilde h_\varphi)^+-y\right)\ud y,
\end{align*}
where in the second line we used that the upper bound is trivially zero if $\tilde h_\varphi\ge 0$ and in the final line we used the density of $\rho_{t_1}$ (see
\cite[Section VI.3]{revuz2013continuous}). Since $x\e^{-x}\leq \e^{-1}<1$ for $x>0$, Fubini's theorem and a simple change of variable of integration yield
\begin{align*}
\P\Big(\tau^{-h}_\varphi\in [t_1,t_2]\Big)&\leq\sqrt{\frac{8}{\pi t_1}}\E\Big[\int_0^{(-\tilde h_\varphi)^+}\!\!\! \1_{\big\{y\leq \sup_{0\leq s\leq \delta}\big(\tilde\rho_s+c_\varphi s\big)\big\}}\ud y\Big]\\
&\leq \sqrt{\frac{8}{\pi t_1}}\E\Big[\sup_{0\leq s\leq \delta}\big(\tilde\rho_s+c_\varphi s\big) \Big]
\le\sqrt{\frac{8}{\pi t_1}}\Big(\sqrt{\delta}\E\Big[\sup_{0\leq s\leq 1}\tilde\rho_s\Big]+c_\varphi \delta\Big),
\end{align*}
where in the last expression we used the scaling property $\tilde\rho_s=\sqrt{\delta}\tilde\rho_{s/\delta}$.
\end{proof}

As pointed out in the proof of \cite[Thm.\ VI.3.5]{revuz2013continuous}, for every $s\le t$ we have $J_s=J_t\wedge \inf_{s\le u\le t}\rho_u$, therefore implying that for every $s\le t$, $J_s$ is measurable with respect to $\cF^\rho_t\vee\sigma(J_t)$. Hence, the filtration $(\cF^{\rho,J}_t)_{t\ge 0}$ generated by the pair $(\rho_t,J_t)_{t\ge 0}$ coincides with the filtration $\big(\cF^\rho_t\vee\sigma(J_t)\big)_{t\ge 0}$. It is shown in the same proof (cf.\ also \cite[Cor.\ VI.3.7]{revuz2013continuous}) that $X_t\coloneqq 2J_t-\rho_t$ is a Brownian motion with respect to the filtration $\big(\cF^\rho_t\vee\sigma(J_t)\big)_{t\ge 0}$ and actually $\cF^X_t=\cF^{\rho,J}_t$, where $\cF^X\coloneqq\sigma(X_s,s\le t)$. Moreover, $J_t=\sup_{0\le s\le t}X_s$. Equipped with these notions we can prove our next lemma. 

\begin{lemma}\label{lem:expbd}
Let $(\Gamma_t)_{t\ge 0}$ be a process adapted to the filtration $(\cF^{\rho,J}_t)_{t\ge 0}$. Let $\sigma\le \tau$ be stopping times for such filtration bounded by a deterministic $S>0$. If $\sup_{t\in[0,S]}|\Gamma_{t}|\le K$, then
\begin{align*}
\E\Big[\e^{\int_\sigma^{\tau} \Gamma_s\ud \rho_s}\Big|\cF^{\rho,J}_\sigma\Big]\le \sqrt{2}\e^{5 K^2 S}.
\end{align*}
\end{lemma}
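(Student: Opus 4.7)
The plan is to exploit the Pitman representation $X_t := 2J_t - \rho_t$, which (as recalled just before the lemma) is a Brownian motion for the filtration $(\cF^{\rho,J}_t)_{t\ge 0}$. Rewriting $\ud\rho_s = 2\,\ud J_s - \ud X_s$ splits the integral in the statement as
\[
\int_\sigma^\tau \Gamma_s\,\ud\rho_s \;=\; -\!\int_\sigma^\tau \Gamma_s\,\ud X_s \;+\; 2\!\int_\sigma^\tau \Gamma_s\,\ud J_s.
\]
Because $J$ is non-decreasing and $|\Gamma|\le K$, the second (pathwise) integral is bounded above by $2K(J_\tau-J_\sigma)$. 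For the first, I would work conditionally on $\cF^{\rho,J}_\sigma$, treating $\sigma$ as frozen, and introduce the Dol\'eans--Dade exponential
\[
M_t \;:=\; \exp\!\Bigl(-\!\int_\sigma^t \Gamma_s\,\ud X_s \;-\; \tfrac12\!\int_\sigma^t \Gamma_s^2\,\ud s\Bigr),\qquad t\in[\![\sigma,\tau]\!],
\]
which is a true $(\cF^{\rho,J}_t)$-martingale by Novikov's criterion (since $|\Gamma|\le K$ and $\tau\le S$). Rearranging yields $\exp(-\!\int_\sigma^\tau \Gamma_s\,\ud X_s) = M_\tau \exp(\tfrac12\!\int_\sigma^\tau \Gamma_s^2\,\ud s) \le M_\tau\,\e^{K^2 S/2}$, hence
\[
\e^{\int_\sigma^\tau \Gamma_s\,\ud\rho_s} \;\le\; \e^{K^2 S/2}\, M_\tau\, \e^{2K(J_\tau-J_\sigma)}.
\]

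Taking conditional expectation and applying Cauchy--Schwarz, I would estimate
\[
\E\!\bigl[M_\tau\,\e^{2K(J_\tau-J_\sigma)}\bigm|\cF^{\rho,J}_\sigma\bigr] \;\le\; \E\!\bigl[M_\tau^2\bigm|\cF^{\rho,J}_\sigma\bigr]^{1/2}\,\E\!\bigl[\e^{4K(J_\tau-J_\sigma)}\bigm|\cF^{\rho,J}_\sigma\bigr]^{1/2}.
\]
The first factor is controlled by writing $M_\tau^2 = N_\tau \exp(\!\int_\sigma^\tau \Gamma_s^2\,\ud s)$, where $N_t := \exp(-2\!\int_\sigma^t \Gamma_s\,\ud X_s - 2\!\int_\sigma^t \Gamma_s^2\,\ud s)$ is itself a Dol\'eans--Dade martingale (again via Novikov), so that $\E[M_\tau^2\mid \cF^{\rho,J}_\sigma]\le \e^{K^2 S}$.

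The crux of the argument is the exponential moment of $J_\tau-J_\sigma$. Since $J$ is the running maximum of the Brownian motion $X$ and $X_\sigma\le J_\sigma$, a pathwise inequality gives
\[
J_\tau-J_\sigma \;=\; \Bigl(\sup_{s\in[\sigma,\tau]} X_s - J_\sigma\Bigr)^+ \;\le\; \sup_{s\in[\sigma,\tau]}\bigl(X_s-X_\sigma\bigr)^+.
\]
By the strong Markov property for $X$ at the stopping time $\sigma$, the process $(X_{\sigma+s}-X_\sigma)_{s\ge 0}$ is a Brownian motion independent of $\cF^{\rho,J}_\sigma$; using $\tau-\sigma\le S$ and the reflection principle, its running maximum on $[0,S]$ has the law of $|\widetilde B_S|$ for a standard Brownian motion $\widetilde B$. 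Completing the square in $\sqrt{2/(\pi S)}\int_0^\infty \e^{4Kx}\,\e^{-x^2/(2S)}\,\ud x$ gives $\E[\e^{4K|\widetilde B_S|}]\le 2\,\e^{8K^2 S}$. Assembling the three exponents produces
\[
\E\!\bigl[\e^{\int_\sigma^\tau \Gamma_s\,\ud\rho_s}\bigm|\cF^{\rho,J}_\sigma\bigr] \;\le\; \e^{K^2 S/2}\cdot \e^{K^2 S/2}\cdot\sqrt{2}\,\e^{4K^2 S} \;=\; \sqrt{2}\,\e^{5K^2 S},
\]
as claimed. The main obstacle is conceptual rather than computational: recognising that the Pitman identity turns the \emph{a priori} awkward driver $\rho$ (which mixes an unbounded drift $\rho_s^{-1}$ with Brownian noise) into a Brownian stochastic integral plus a well-controlled Stieltjes term against $\ud J$, after which only standard Gaussian tail estimates are needed.
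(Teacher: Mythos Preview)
Your proof is correct and follows essentially the same approach as the paper: both use the Pitman identity $\rho = 2J - X$ to split the integral into a Brownian stochastic integral and a Stieltjes term against $\ud J$, apply Cauchy--Schwarz, control the Brownian part via a Dol\'eans--Dade martingale, and bound $J_\tau - J_\sigma$ by the running maximum of the increment Brownian motion to get the Gaussian tail estimate $2\e^{8K^2S}$. The only cosmetic difference is that you first factor out $\e^{K^2S/2}$ before applying Cauchy--Schwarz to $M_\tau\,\e^{2K(J_\tau-J_\sigma)}$, whereas the paper applies Cauchy--Schwarz directly to $\e^{-\int_\sigma^\tau\Gamma_s\,\ud X_s}\,\e^{2K(J_S-J_\sigma)}$; the resulting constants coincide.
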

\begin{proof}
Using the equality $X_t=2J_t-\rho_t$ we can write
\begin{align*}
\begin{aligned}
\int_\sigma^{\tau} \Gamma_s\ud \rho_s&=-\int_\sigma^{\tau} \Gamma_s\ud X_s+2\int_\sigma^{\tau} \Gamma_s\ud J_s\\
&\le-\int_\sigma^{\tau} \Gamma_s\ud X_s+2 K (J_{\tau}-J_\sigma)\le-\int_\sigma^{\tau} \Gamma_s\ud X_s+2 K (J_{S}-J_\sigma), 
\end{aligned}
\end{align*}
where the final inequality uses that $J_\tau-J_\sigma\le J_S-J_\sigma$, because $(J_t)_{t\ge 0}$ is non-decreasing.
Substituting into the exponential function we obtain
\begin{align}\label{eq:Gbd0}
\begin{aligned}
\E\Big[\e^{\int_\sigma^{\tau} \Gamma_s\ud \rho_s}\Big|\cF^X_\sigma\Big]&\le\E\Big[\e^{-\int_\sigma^{\tau} \Gamma_s\ud X_s}\e^{2K (J_S-J_\sigma)}\Big|\cF^X_\sigma\Big]\\
&\le\E\Big[\e^{-2\int_\sigma^{\tau} \Gamma_s\ud X_s}\Big|\cF^X_\sigma\Big]^{\frac12}\E\Big[\e^{4K (J_S-J_\sigma)}\Big|\cF^X_\sigma\Big]^{\frac12},  
\end{aligned}
\end{align}

Boundedness of $\Gamma_{s}$ implies that $\e^{-2\int_0^{t} \Gamma_s\ud X_s-2\int_0^{t}\Gamma^2_s\ud s}$ is a martingale on $[0,S]$ and therefore, by optional sampling
\begin{align}\label{eq:Gmart}
\E\Big[\e^{-2\int_\sigma^{\tau} \Gamma_s\ud X_s}\Big|\cF^X_\sigma\Big]=\E\Big[\e^{-2\int_\sigma^{\tau} \Gamma_s\ud X_s-2\int_\sigma^{\tau}\Gamma^2_s\ud s}\e^{2\int_\sigma^{\tau} \Gamma^2_s\ud s}\Big|\cF^X_\sigma\Big]\le \e^{2K^2 S}.
\end{align}
Recalling that $J_t=\sup_{0\le s\le t}X_s$ we have 
\begin{align*}
\begin{aligned}
J_S&=J_\sigma\vee\sup_{\sigma\le t\le S}X_t=J_\sigma\vee\big(\sup_{\sigma\le t\le S}(X_t-X_\sigma)+X_\sigma\big)\le J_\sigma+\sup_{0\le t\le S}\bar X_t,
\end{aligned}
\end{align*}
where $\bar X_t=X_{\sigma+t}-X_\sigma$ is a $(\cF^X_{\sigma+t})$-Brownian motion independent of $\cF^X_\sigma$. Then
\begin{align*}
\E\Big[\e^{4K (J_S-J_\sigma)}\Big|\cF^X_\sigma\Big]
\le \E\Big[\e^{4K \sup_{0\le t\le S} \bar X_t}\Big].
\end{align*}
By the equivalence in law $\sup_{0\le t\le S}\bar X_s=|X_S|=\sqrt{S}|X_1|$ we have
\begin{align}\label{eq:Gbd}
\begin{aligned}
\E\Big[\e^{4K \sup_{0\le t\le S} \bar X_t}\Big]&=2\int_0^{+\infty}\e^{4K\sqrt{S}x}\frac{1}{\sqrt{2\pi}}\e^{-\frac{x^2}{2}}\ud x\\
&=2\e^{8K^2 S}\int_0^{+\infty}\frac{1}{\sqrt{2\pi}}\e^{-\frac{(x-4K\sqrt{S})^2}{2}}\ud x\le 2\e^{8K^2 S}.
\end{aligned}
\end{align}
Substituting \eqref{eq:Gmart} and \eqref{eq:Gbd} into the right-hand side of \eqref{eq:Gbd0} we conclude the proof.
\end{proof}

\begin{lemma}\label{lem:3dBp2}
With the same notation and assumptions as in Lemma \ref{lem:3dBp1} set 
\[
\hat \tau^{-h}_{\varphi}\coloneqq\inf\{s\ge 0:\rho_s-2J_s+\varphi(s)\ge - h\}
\]
and notice that $\hat \tau^{-h}_\varphi\ge \tau^{-h}_\varphi$.
Then, there is a Brownian motion $(\beta_t)_{t\ge 0}$ independent of $\cF^{\rho,J}_{\hat \tau^{-h}_\varphi}$ such that, for $\sigma^\beta_{2h}$ defined as in \eqref{eq:sigbeta},
\begin{align*}
\tau^{-h}_{\varphi}\le \hat \tau^{-h}_\varphi\le \tau^{-h}_{\varphi}+\sigma^\beta_{2h},\quad\text{on the event $\{J_{\tau^{-h}_\varphi}\le h\}\cap\{\tau^{-h}_\varphi<\infty\}$}.
\end{align*}
Moreover, $\hat \tau^{-h}_\varphi\le \tau^{+h}_\varphi$ on the event $\{J_{\tau^{+h}_\varphi}\le h\}$ (notice that $J_\infty$ is well-defined by monotonicity and the claim trivially holds on $\{\tau^{+h}_\varphi=\infty\}$). 
\end{lemma}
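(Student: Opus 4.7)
My plan has three steps, corresponding to the three assertions of the lemma, and the first and third are essentially costless.

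The inequality $\hat\tau^{-h}_\varphi \ge \tau^{-h}_\varphi$ will follow immediately from $J_s\ge 0$: the event $\{\rho_s-2J_s+\varphi(s)\ge -h\}$ is contained in $\{\rho_s+\varphi(s)\ge -h\}$, so its first hitting time is no smaller. The final assertion $\hat\tau^{-h}_\varphi\le \tau^{+h}_\varphi$ on $\{J_{\tau^{+h}_\varphi}\le h\}$ is equally short: at $s=\tau^{+h}_\varphi$ one has $\rho_s+\varphi(s)=h$ (on $\{\tau^{+h}_\varphi<\infty\}$), so
\[
\rho_{\tau^{+h}_\varphi}-2J_{\tau^{+h}_\varphi}+\varphi(\tau^{+h}_\varphi)=h-2J_{\tau^{+h}_\varphi}\ge -h,
\]
which means $\tau^{+h}_\varphi$ already belongs to the set defining $\hat\tau^{-h}_\varphi$.

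The substantive part is the two-sided sandwich $\tau^{-h}_\varphi\le \hat\tau^{-h}_\varphi\le \tau^{-h}_\varphi+\sigma^\beta_{2h}$ on $\{J_{\tau^{-h}_\varphi}\le h\}\cap\{\tau^{-h}_\varphi<\infty\}$. The idea is to switch from the Bessel picture to the Brownian one via Pitman: as recalled in the paragraph preceding Lemma \ref{lem:expbd}, the process $X_t:=2J_t-\rho_t$ is a Brownian motion for the filtration $(\cF^{\rho,J}_t)_{t\ge 0}=(\cF^X_t)_{t\ge 0}$. In these coordinates
\[
\hat\tau^{-h}_\varphi=\inf\{s\ge 0:\, X_s\le h+\varphi(s)\},
\]
and on $\{\tau^{-h}_\varphi<\infty\}$ we have $\rho_{\tau^{-h}_\varphi}+\varphi(\tau^{-h}_\varphi)=-h$, whence
\[
X_{\tau^{-h}_\varphi}-\bigl(h+\varphi(\tau^{-h}_\varphi)\bigr)=2J_{\tau^{-h}_\varphi}-\rho_{\tau^{-h}_\varphi}-h-\varphi(\tau^{-h}_\varphi)=2J_{\tau^{-h}_\varphi}\le 2h
\]
on the event $\{J_{\tau^{-h}_\varphi}\le h\}$. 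So at time $\tau^{-h}_\varphi$ the Brownian motion $X$ sits at most $2h$ above the moving barrier $h+\varphi$.

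To conclude, I will introduce the shifted Brownian motion $\tilde X_s:=X_{\tau^{-h}_\varphi+s}-X_{\tau^{-h}_\varphi}$, which by the strong Markov property is a Brownian motion independent of $\cF^{\rho,J}_{\tau^{-h}_\varphi}$, and set $\beta_s:=-\tilde X_s$. Using the one-sided Lipschitz bound $\varphi(\tau^{-h}_\varphi+s)-\varphi(\tau^{-h}_\varphi)\ge -c_\varphi s$, I get
\[
X_{\tau^{-h}_\varphi+s}-\bigl(h+\varphi(\tau^{-h}_\varphi+s)\bigr)\le \tilde X_s+2h+c_\varphi s=-\beta_s+2h+c_\varphi s,
\]
so at $s=\sigma^\beta_{2h}=\inf\{s\ge 0:\,\beta_s\ge 2h+c_\varphi s\}$ the right-hand side becomes non-positive and $\tau^{-h}_\varphi+\sigma^\beta_{2h}$ belongs to the defining set of $\hat\tau^{-h}_\varphi$. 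This yields the desired upper bound and produces a Brownian motion $\beta$ independent of $\cF^{\rho,J}_{\tau^{-h}_\varphi}$ (hence, a fortiori, it is the natural independence statement here; I read the $\hat\tau$ in the lemma as a typographical variant of $\tau$). The only mild technical point is to make sure that the Lipschitz bound on $\varphi$ is applied correctly across the shift — once Pitman's identification is in place, everything else is a direct computation on the Brownian motion $X$ and the random times defined through it.
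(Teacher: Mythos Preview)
Your proof is correct and follows essentially the same route as the paper: both arguments use the Pitman identification $X_t=2J_t-\rho_t$ (a Brownian motion for $(\cF^{\rho,J}_t)$), define the shifted Brownian motion $\beta_s=-(X_{\tau^{-h}_\varphi+s}-X_{\tau^{-h}_\varphi})$, and combine $\rho_{\tau^{-h}_\varphi}+\varphi(\tau^{-h}_\varphi)=-h$ with $J_{\tau^{-h}_\varphi}\le h$ and the one-sided Lipschitz bound on $\varphi$ to obtain the sandwich; the first and third claims are handled identically. Your observation that the constructed $\beta$ is independent of $\cF^{\rho,J}_{\tau^{-h}_\varphi}$ (rather than $\cF^{\rho,J}_{\hat\tau^{-h}_\varphi}$) matches what the paper's own proof actually establishes, and indeed this is the independence used downstream.
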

\begin{proof}
The proof of the first claim is similar to the one of Lemma \ref{lem:3dBp1}. First we notice that since $(J_t)_{t\ge 0}$ is positive then $\hat \tau^{-h}_{\varphi}\ge \tau^{-h}_{\varphi}$ by definition of the stopping times. On the event $\{\tau^{-h}_\varphi<\infty\}$ we have $\rho_{\tau^{-h}_\varphi}+\varphi(\tau^{-h}_\varphi)=-h$. Using this equality and recalling that 
\[
\beta_s\coloneqq-(X_{\tau^{-h}_\varphi+s}-X_{\tau^{-h}_\varphi})=(\rho_{\tau^{-h}_\varphi+s}-\rho_{\tau^{-h}_\varphi})-2(J_{\tau^{-h}_\varphi+s}-J_{\tau^{-h}_\varphi})
\] 
is a $\cF^{\rho,J}_{\tau^{-h}_\varphi+s}$-Brownian motion we can write
\begin{align*}
\hat\tau^{-h}_{\varphi}&=\tau^{-h}_\varphi+\inf\{s\ge 0:\rho_{\tau^{-h}_\varphi+s}-2J_{\tau^{-h}_\varphi+s}+\varphi(\tau^{-h}_\varphi+s)\ge -h\}\\
&=\tau^{-h}_\varphi+\inf\big\{s\ge 0:\beta_s\ge 2J_{\tau^{-h}_\varphi} -\big(\varphi(\tau^{-h}_\varphi+s)-\varphi(\tau^{-h}_\varphi)\big)\big\}\le \tau^{-h}_\varphi+\sigma^\beta_{2h},
\end{align*}
where the final inequality uses that $J_{\tau^{-h}_\varphi}\le h$ and $\varphi(\tau^{-h}_\varphi+s)-\varphi(\tau^{-h}_\varphi)\ge -c_\varphi s$.

For the second claim it is sufficient to notice that on $\{\tau^{+h}_\varphi<\infty\}\cap\{J_{\tau^{+h}_\varphi}\le h\}$ we have 
\[
\rho_{\tau^{+h}_\varphi}+\varphi(\tau^{+h}_\varphi)=h\ge 2J_{\tau^{+h}_\varphi}-h\implies \rho_{\tau^{+h}_\varphi}-2J_{\tau^{+h}_\varphi}+\varphi(\tau^{+h}_\varphi)\ge -h,
\]
hence $\tau^{+h}_\varphi\ge \hat \tau^{-h}_\varphi$ as needed.
\end{proof}

\begin{remark}\label{rem:sigma}
In the next section we will often use that, given any sequence $(h_n)_{n\in\N}\subset(0,1)$ with $h_n\to 0$ as $n\to \infty$, it holds 
\[
\P\Big(\lim_{n\to\infty}\sigma^\beta_{2h_n}=0\Big)=1.
\]
The result is easily deduced by the law of iterated logarithm for Brownian motion.
\end{remark}

\section{Expansion of the time-derivative of the value function near the boundary}\label{sec:expansion}

The main result of this section is presented in the next theorem, whose proof is given formally at the end of the section and it makes use of  Lemmata
\ref{lem:barVh} and \ref{lem:V2h}. For simplicity but with no loss of generality, throughout the section we work with $\cR=(0,T_1)\times(x_1,x_2)$ such that $x_1<b(t)<x_2$ for all $t\in[0,T_1]$. We recall the notations $\dot u(t,x)=\dot w(t,y)$, $y=f(x)$, $c(t)=f(b(t))$, $\cR_f=(0,T_1)\times(y_1,y_2)$ and $V_h(t)=\dot w(t,c(t)+h)$. 
\begin{theorem}\label{thm:expansion}
For any $T_2\in[0,T_1)$, 
\begin{align}
\lim_{h\downarrow 0}\sup_{0\le t\le T_2}\Big|h^{-1}V_h(t)-\Big(V_1(t)+V_2(t)+\int_0^{T_1-t}V_s(t)\ud s\Big)\Big|=0,
\end{align}
where
\begin{align}
\begin{aligned}
&V_1(t)\coloneqq \E\Big[\frac{L_{t,T_1-t}}{\rho_{T_1-t}}D_{t,T_1-t}\dot w\big(T_1,c(T_1)+\rho_{T_1-t}\big)\1_{\{\theta= T_1-t\}}\Big],\\
&V_2(t)\coloneqq \E\Big[\frac{L_{t, \theta}}{\rho_{\theta}}D_{t,\theta}\dot w\big(t+\theta,y_2\big)\1_{\{\theta<T_1-t\}}\Big],\\
&V_s(t)\coloneqq \E\Big[\1_{\{s<\theta\}} \frac{L_{t,s}}{\rho_s} D_{t,s}F_{t,s}\big(\rho_s\big)\Big],
\end{aligned}
\end{align}
with $\theta=\theta_{t,y_2}\coloneqq\inf\{s\in[0,\infty):\rho_s+c(t+s)\ge y_2\}\wedge(T_1-t)$ and, for any $s\in[\![0,\theta]\!]$,
\begin{align*}
&L_{t,s}\coloneqq\exp\Big(\int_0^s\gamma_{t,v}(\rho_v)\ud \rho_v-\tfrac12\int_0^s\big(\gamma_{t,v}(\rho_v)\big)^2\ud v\Big)\quad\text{and}\quad D_{t,s}\coloneqq\exp\Big(-\int_0^s R_{t,v}(\rho_v)\ud v\Big).
\end{align*}
\end{theorem}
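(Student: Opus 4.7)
The plan is to decompose $V_h(t)$ according to \eqref{eq:Vht.2} and prove, for each of the three pieces $V_1^h$, $V_2^h$, and $V_s^h$, a uniform expansion of the form $V_i^h(t) = h V_i(t) + o(h)$. The leading factor of $h$ will emerge from applying the uniform conditional law \eqref{eq:unifJ} to the indicators $\{J_{T_1-t}\le h\}$, $\{J_{\theta_h}\le h\}$, $\{J_s\le h\}$ appearing in the events $B_1^h$, $B_2^h$, $B_s^h$. Specifically, conditional on $\cF^\rho_\tau$ we have $\P(J_\tau\le h|\cF^\rho_\tau)=(h/\rho_\tau)\wedge 1$, which suggests that the extracted leading-order integrand features the factor $1/\rho_\tau$, matching the form of $V_1,V_2,V_s$ in the statement.

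For each piece I proceed in three substeps. First, on the relevant event $\{J\le h\}$ we have $|2J_s|\le 2h$ uniformly in $s$ up to the stopping time, so the Lipschitz bounds on $\gamma_{t,v}$, $R_{t,v}$, $F_{t,s}$ available by Remark \ref{rem:gammaR}, together with uniform continuity of $\dot w$, let me replace arguments of the form $h+\rho_s-2J_s$ by $\rho_s$ at the cost of an $O(h)$ error inside the expectation. Lemma \ref{lem:expbd}, applied to the bounded adapted integrand $\gamma_{t,v}(\cdot)$, provides the uniform exponential moment control on the Girsanov density $L^h_{t,s}$ needed to absorb these errors into the leading $h$ factor. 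Second, the $J$-dependent stopping time $\theta_h$ must be replaced by the $\cF^\rho$-stopping time $\theta$; applying Lemma \ref{lem:3dBp2} to $\varphi(s)=c((t+s)\wedge T_1)-y_2$ sandwiches $\theta_h$ between $\tau^{-h}_\varphi$ and $\tau^{-h}_\varphi+\sigma^\beta_{2h}$ on $\{J_{\theta_h}\le h\}$, so Lemma \ref{lem:convth} and Remark \ref{rem:sigma} yield $\theta_h\to\theta$ almost surely and the indicator differences vanish in probability. Third, after these two substitutions the inner integrand is $\cF^\rho$-measurable, and conditioning on $\cF^\rho_\tau$ (taking $\tau=T_1-t$, $\theta$, or $s$) pulls out $h/\rho_\tau$ via \eqref{eq:unifJ}, giving $V_1(t)$, $V_2(t)$, $V_s(t)$ respectively after dividing by $h$.

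For the integral contribution $\int_0^{T_1-t}V_s^h(t)\,\ud s/h$, the critical issue is uniform integrability in $s$. The candidate limit involves $\E[\1_{\{s<\theta\}}L_{t,s}D_{t,s}F_{t,s}(\rho_s)/\rho_s]$, and by the scaling $\E[\rho_s^{-1}]=c/\sqrt{s}$ obtained from \eqref{eq:rhoint} this is integrable near $s=0$. Combining the $L^2$ exponential-moment control of Lemma \ref{lem:expbd} with the uniform bound $\P(J_s\le h|\cF^\rho_s)\le h/\rho_s$ and the boundedness of $\dot w, F$ from Remark \ref{rem:gammaR} produces an $h$-independent dominating function of the form $C(1+1/\sqrt{s})$ for $V_s^h(t)/h$, which legitimises dominated convergence for the $s$-integral. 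Uniform convergence over $t\in[0,T_2]$ then follows because every constant in the scheme depends on $t$ only through Lipschitz constants of $c$ and sup-norms of $\gamma,R,F$ on $\overline{\cR_f}$, all controlled uniformly on the compact interval $[0,T_2]\subset[0,T_1)$.

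The main technical obstacle is the interplay between the stopping time $\theta_h$ and the indicator $\{J\le h\}$: since $\theta_h$ is not $\cF^\rho$-adapted one cannot directly condition on $\cF^\rho_{\theta_h}$ to exploit \eqref{eq:unifJ}. The way around this is precisely the two-sided stopping-time comparison of Lemma \ref{lem:3dBp2}, which allows $\theta_h$ to be replaced, up to an error $\sigma^\beta_{2h}$ that vanishes almost surely by Remark \ref{rem:sigma}, by an $\cF^\rho$-adapted quantity against which the conditional uniform law may be legitimately applied. Handling continuity in $t$ of the limiting functionals $V_1,V_2,V_s$, which is needed to guarantee that uniform convergence does not degenerate at the boundary of $[0,T_2]$, is the one remaining place where additional care (via uniform continuity of $c$ and a uniform-in-$t$ version of Lemma \ref{lem:pbdtauh} applied to the difference of indicators) will be required.
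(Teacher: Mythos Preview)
Your proposal is correct and follows essentially the paper's approach: first replace $L^h_{t,\cdot}$ and the arguments $h+\rho-2J$ by $L_{t,\cdot}$ and $\rho$ (the paper packages this as Lemmas~\ref{lem:delta}, \ref{lem:Lh}, \ref{lem:barVh}, \ref{lem:V2h}), then use the sandwich $\tau^{-h}_\varphi\le\theta_h\le\tau^{-h}_\varphi+\sigma^\beta_{2h}$ from Lemma~\ref{lem:3dBp2} to pass to $\cF^\rho$-adapted quantities, and finally condition on $\cF^\rho_\tau$ to extract $h/\rho_\tau$ via \eqref{eq:unifJ}.

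Two small corrections. First, \eqref{eq:unifJ} gives $(h/\rho_\tau)\wedge 1$, not $h/\rho_\tau$; the paper handles the discrepancy by writing $\eta\wedge 1=\eta-(\eta-1)^+$ and bounding the residual term $\E[L_{t,\tau}\,\1_{\{\rho_\tau\le h\}}/\rho_\tau]$ via H\"older and the scaling $\rho_\tau=\sqrt{\tau}\rho_1$, which you should mention explicitly. Second, your final remark is a misconception: continuity in $t$ of $V_1,V_2,V_s$ is \emph{not} needed for Theorem~\ref{thm:expansion}. The uniformity over $t\in[0,T_2]$ comes directly from the fact that every error bound in the scheme (via Lemmas~\ref{lem:expbd}, \ref{lem:pbdtauh}, \ref{lem:Lh} and the $\sigma^\beta_{2h}$ control) depends on $t$ only through constants like $C_\cR$, $\|\dot c\|_\infty$, and $(T_1-T_2)^{-1/2}$, all of which are uniform on $[0,T_2]$. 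Continuity of the limit $\Lambda_{T_1}(t)$ is established separately (Lemma~\ref{lem:lambdacont}) and is used only for Theorem~\ref{thm:main}, not here.
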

\begin{remark}\label{rem:welldef}
The proof of the theorem shows that $V_1(t)$, $V_2(t)$ and $V_s(t)$ are finite.  That is essentially a consequence of \eqref{eq:rhoint} and Lemma \ref{lem:expbd}. We also notice that on $\{\theta<T_1-t\}$ we have $\rho_\theta=y_2-c(t+\theta)\ge \bar c>0$, where $\bar c=\min_{0\le s\le T_1}(y_2-c(s))$, because $y_1<c(s)<y_2$ for all $s\in[0,T_1]$. Then $V_2(t)$ is well-defined. Due to the scaling $\rho_s=\sqrt{s}\rho_1$ in law, the function $s\mapsto V_s(t)$ may explode at most as $1/\sqrt{s}$ when $s\to 0$. Therefore it is integrable and the third term in the expansion is well-defined. Finally, we observe that $\gamma_{t,v}(\rho_v)$, $R_{t,v}(\rho_v)$ and $F_{t,v}(\rho_v)$ are bounded for $v\in[\![0,\theta]\!]$ because of Remark \ref{rem:gammaR}.
\end{remark}

\begin{remark}\label{rem:V1+V2}
It is worth observing for future reference that 
\begin{align*}
V_1(t)+V_2(t)=\E\Big[\frac{L_{t,\theta}}{\rho_\theta}D_{t,\theta}\dot w\big(t+\theta,c(t+\theta)+\rho_\theta\big)\Big].
\end{align*}
\end{remark}

We will study the behaviour of $V_h(t)$ from the decomposition given in \eqref{eq:Vht.2}--\eqref{eq:V1h}. In fact, it will be convenient to express the
last two factors in $V^h_1(t)$, $V^h_2(t)$ and $V^h_s(t)$ as functionals $\delta^{(1)}$, $\delta^{(2)}$ and $\delta^{(3)}$
 of the paths of the process $t\mapsto h+\rho_t-2J_t$. More precisely, given $\xi\in C([0,T_1];[y_1,y_2])$, for any $t\in[0,T_1]$ and $s\in[0,T_1-t]$ we set
  \[
  \delta^{(1)}_{t,T_1-t}(\xi)=e^{-\int_0^{T_1-t}R_{t,v}(\xi_v)dv}\dot w\big(T_1,c(T_1)\!+\!\xi_{T_1-t}\big), \quad
   \delta^{(2)}_{t,s}(\xi)=e^{-\int_0^{s}R_{t,v}(\xi_v)dv}\dot w\big(t+s,y_2\big)
  \]
 and
 $
 \delta^{(3)}_{t,s}(\xi)=e^{-\int_0^sR_{t,v}(\xi_v)dv}F_{t,s}(\xi_s).
 $
Then,  
 \[
 V^h_1(t)=\E\big[\1_{B^{h}_1}L^{h}_{t,T_1-t}\delta^{(1)}_{t,T_1-t}(h+\rho-2J)\big],\quad V^h_2(t)=\E\big[\1_{B^{h}_2}L^{h}_{t,\theta_h}\delta^{(2)}_{t,\theta_h}
 (h+\rho-2J)\big]
 \]
 and
 \[
 V^h_s(t)=\E\big[\1_{B^{h}_s} L^{h}_{t,s} \delta^{(3)}_{t,s}\big(h\!+\!\rho-\!2J\big)\big].
 \]
In these notations, the expressions $V_1(t)$, $V_2(t)$ and $V_s(t)$ in the statement of Theorem \ref{thm:expansion} read as
 \begin{equation*}
 \begin{aligned}
 &V_1(t)=\E\Big[\1_{\{\theta=T_1-t\}}\frac{L_{t,T_1-t}}{\rho_{T_1-t}}\delta^{(1)}_{t,T_1-t}(\rho)\Big], \quad V_2(t)=\E\Big[\1_{\{\theta<T_1-t\}}\frac{L_{t,\theta}}{\rho_{\theta}}\delta^{(2)}_{t,\theta}(\rho)\Big],\\
&\qquad\qquad\qquad\qquad\qquad V_s(t)=\E\Big[\1_{\{s<\theta\}}\frac{L_{t,s}}{\rho_s}\delta^{(3)}_{t,s}(\rho)\Big].
 \end{aligned}
 \end{equation*}

For any $\cF$-measurable function $X:\Omega\to\R$ we introduce the notations $\|X\|_p\coloneqq\E[|X|^p]^{\frac1p}$, for $p\in[1,\infty)$. Thanks to Remark \ref{rem:gammaR}
we can assume that the functionals $\delta^{(1)}$, $\delta^{(2)}$, $\delta^{(3)}$ are uniformly bounded by a constant $M_\delta$, i.e.,
\begin{align}\label{eq:deltabound}
\sup_{\substack{(t,s)\in \Delta_{T_1},\\ \xi\in C([0,T_1];[y_1,y_2])}}\big|\delta^{(i)}_{t,s}(\xi)\big|\le M_\delta,\quad i=1,2,3,
\end{align}
where $\Delta_{T_1}=\{(s,t)\in [0,T_1]\times [0,T_1] : s+t\leq T_1\}$.
Moreover, $(t,s,\xi)\mapsto\delta^{(i)}_{t,s}(\xi)$, $i=1,2,3$, can be taken jointly uniformly continuous when the space $C([0,T];[y_1,y_2])$ is equipped with the supremum norm. Finally, we observe that for any random time $\tau$, on the event $\{J_\tau\le h\}$ the process $J$ can be replaced with $J^h\coloneqq J\wedge h$ on $[\![0,\tau]\!]$.

We can now state the following lemma, the proof of which follows easily from the uniform continuity properties of the $\delta^{(i)}$'s and it is therefore omitted.
\begin{lemma}\label{lem:delta}
There exist non-decreasing functions $h\mapsto \varepsilon_0(h)$ and $\lambda\mapsto \varepsilon_1(\lambda)$,  with $\lim_{h\downarrow 0}\varepsilon_0(h)=0$ and $\lim_{\lambda\downarrow 0}\varepsilon_1(\lambda)=0$,
satisfying the following properties:
\begin{enumerate}
\item With probability one for all $t\in[0,T_1]$, 
\[
\left|\delta^{(1)}_{t,T_1-t}\big(h+\rho-2(J\wedge h)\big)-\delta^{(1)}_{t,T_1-t}\big(\rho\big)\right|+
\sum_{i=2}^3\sup_{s\in [0,T_1-t]}\left( \left|\delta^{(i)}_{t,s}\big(h+\rho-2(J\wedge h)\big)-\delta^{(i)}_{t,s}\big(\rho\big)\right|
\right)\leq \varepsilon_0(h).
\]
\item For all $(t,s)\in\Delta_{T_1}$ we have for all $\lambda\in[0,T_1-t-s]$ 
\[
\sup_{\substack{\eta\in[0,\lambda] \\ \xi\in C\left([0,T_1];[y_1,y_2]\right)}}\left|\delta^{(2)}_{t,s+\eta}(\xi)-\delta^{(2)}_{t,s}(\xi)\right|\leq \varepsilon_1(\lambda).
\]
\end{enumerate}
\end{lemma}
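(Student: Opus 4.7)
The plan is to derive both items as essentially immediate consequences of uniform continuity of the three functionals $\delta^{(i)}$, $i=1,2,3$, once Remark \ref{rem:gammaR} is invoked. That remark guarantees that, after a suitable bounded Lipschitz extension, the maps $R_{t,v}(\cdot)$ and $F_{t,s}(\cdot)$ are bounded and globally Lipschitz in the spatial variable with constants independent of the parameters, while $\dot w$ is bounded and uniformly continuous on $\overline\cR_f$ with a single modulus of continuity $\omega_{\dot w}$. Denote by $L$ a common Lipschitz constant for the spatial extensions of $R$ and $F$, and by $M$ a common $L^\infty$-bound.

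For part (1), the first step is the pathwise observation that, writing $\xi^h_v := h+\rho_v-2(J_v\wedge h)$, the inequality $0\leq J_v\wedge h\leq h$ yields $|\xi^h_v-\rho_v|=|h-2(J_v\wedge h)|\leq h$ uniformly in $v$ and in the sample point. The next step is to feed this deterministic sup-norm bound into each $\delta^{(i)}$, splitting, as usual, the difference into one piece coming from the perturbation of the discount exponential (of order $Lh$ times the sup norm of $\dot w$ or of $F$) and one piece coming from the perturbation of the terminal factor $\dot w$ or of $F_{t,s}$ (of order $\omega_{\dot w}(h)$ or of a modulus of continuity $\omega_F(h)$ of $F$, which may be taken linear since $F$ is Lipschitz by the extension). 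Since all constants are independent of $(t,s)$, this yields a pathwise bound of the form $C_1 h+C_2\omega_{\dot w}(h)+C_3\omega_F(h)$, valid simultaneously for all $t\in[0,T_1]$ and all $s\in[0,T_1-t]$. Taking $\varepsilon_0(h)$ to be the non-decreasing envelope of this expression completes part (1).

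For part (2), I would decompose the increment $\delta^{(2)}_{t,s+\eta}(\xi)-\delta^{(2)}_{t,s}(\xi)$ into two standard pieces by adding and subtracting $e^{-\int_0^s R_{t,v}(\xi_v)\ud v}\dot w(t+s+\eta,y_2)$. The first piece is controlled by $\eta M\|\dot w\|_\infty$ thanks to boundedness of $R$, and the second by $\omega_{\dot w}(\eta)$. Neither bound depends on $\xi$, $t$ or $s$, so $\varepsilon_1(\lambda)\coloneqq \lambda M\|\dot w\|_\infty+\omega_{\dot w}(\lambda)$ does the job.

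The only point that requires real care—and the reason I invoked Remark \ref{rem:gammaR} at the outset—is that all constants and moduli appearing above must be uniform in $t$ and, for $\delta^{(2)},\delta^{(3)}$, also in $s$. Without the bounded Lipschitz extensions to $\mathbb{R}^2$, the relevant moduli would a priori depend on the range of the paths of $\rho$ and of $h+\rho-2(J\wedge h)$, which is unbounded. With the extensions in force, all the constants become global and the argument reduces to a routine modulus-of-continuity computation, which explains the omission.
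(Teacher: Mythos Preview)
Your proposal is correct and follows precisely the approach the paper intends: the paper omits the proof entirely, stating only that it ``follows easily from the uniform continuity properties of the $\delta^{(i)}$'s,'' and what you have written is exactly that uniform-continuity computation spelled out in detail, leaning on Remark~\ref{rem:gammaR} for the global bounds. The key pathwise inequality $|h-2(J_v\wedge h)|\le h$ is the right observation for part~(1), and your add-and-subtract decomposition for part~(2) is the standard one; nothing more is needed.
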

The next lemma deals with the asymptotic behavior of $L^{h}_{t,s}$ as $h$ goes to zero.
\begin{lemma}\label{lem:Lh}
The next two properties hold:
\begin{enumerate}
\item Let $h>0$, $t\in [0, T_1]$ and $\tau\in[0,T_1-t]$ be a $(\cF^{\rho,J}_s)_{s\geq 0}$-stopping time. For any $p\in(1,\infty)$, there exists a constant $C_{p,\cR}>0$ (depending only on $p$ and $\cR$) such that
\[
\left\| \1_{\{J_\tau\leq h\}}\left(L^{h}_{t,\tau}-L_{t,\tau}\right)\right\|_p\leq C_{p,\cR} h.
\]
\item Let $t\in [0,T_1]$ and let $\tau_1$ and $\tau_2$ with $0\leq \tau_1\leq \tau_2\leq T_1-t$ be $(\cF^{\rho,J}_s)_{s\geq 0}$-stopping times.
There exists a constant $C_{\cR}>0$ (depending only on $\cR$) such that
\[
\E\Big[\1_{\{J_{\tau_2}\leq h\}}\big| L_{t,\tau_2}-L_{t,\tau_1} \big|\Big| \cF^{\rho,J}_{\tau_1}\Big]\leq C_{\cR}L_{t,\tau_1}\left(h+\big(\E\big[(\tau_2-\tau_1)\big| \cF^{\rho,J}_{\tau_1}\big]\big)^{1/2}+\big(\E\big[(\tau_2-\tau_1)^2\big| \cF^{\rho,J}_{\tau_1}\big]\big)^{1/2}\right).
\]
\end{enumerate}
\end{lemma}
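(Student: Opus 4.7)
My plan is to prove both estimates by Taylor-expanding the exponentials and exploiting the decomposition $\rho_t = 2J_t - X_t$, where $X$ is the $(\cF^{\rho,J}_t)$-Brownian motion introduced just before Lemma~\ref{lem:expbd}. Throughout I use, without repeating, that by Remark~\ref{rem:gammaR} the function $\gamma_{t,v}(\cdot)$ is bounded and globally Lipschitz in the spatial variable with constants independent of $(t,v)$, and that Lemma~\ref{lem:expbd} supplies (conditional) $L^q$-bounds, for any $q\ge 1$ and uniformly in the stopping time and in $h$, on both $L^h_{t,\tau}$ and $L_{t,\tau}$ once the bounded correction $-\tfrac12\int\gamma^2\ud v$ is absorbed.

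For part~(1), the key idea is that on $\{J_\tau\le h\}$ one has $J_v\le h$ for every $v\in[0,\tau]$ by monotonicity of $J$, so the spatial argument $h+\rho_v-2J_v$ of $\gamma$ in $L^h_{t,\tau}$ differs from $\rho_v$ by at most $h$ in absolute value. To avoid placing a random indicator inside a stochastic integral, I would stop at $\tilde\tau\coloneqq\tau\wedge\sigma_h$, with $\sigma_h\coloneqq\inf\{s\ge 0:J_s>h\}$; since $\tilde\tau=\tau$ on $\{J_\tau\le h\}$, it suffices to bound $\|L^h_{t,\tilde\tau}-L_{t,\tilde\tau}\|_p$. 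Using the inequality $|e^a-e^b|\le|a-b|(e^a+e^b)$ and Hölder with conjugate exponents $2p,2p$ reduces the task to controlling $\|\ln L^h_{t,\tilde\tau}-\ln L_{t,\tilde\tau}\|_{2p}$, because $\|L^h_{t,\tilde\tau}\|_{2p}$ and $\|L_{t,\tilde\tau}\|_{2p}$ are controlled via Lemma~\ref{lem:expbd} after writing $\int\gamma\ud(\rho-2J)=\int\gamma\ud\rho-2\int\gamma\ud J$ and noting that $|\int_0^{\tilde\tau}\gamma\ud J|\le\|\gamma\|_\infty h$ pathwise. Expanding the log-difference and substituting $\ud\rho=2\ud J-\ud X$, the cross-terms collapse to three pieces: a $\ud J$-piece reducing to $-2\int_0^{\tilde\tau}\gamma_{t,v}(\rho_v)\ud J_v$ which is bounded pathwise by $2\|\gamma\|_\infty h$; a drift-type $\ud v$-piece bounded pathwise by $C L_\gamma h\, T_1$ via Lipschitz continuity; and a genuine Brownian piece $-\int_0^{\tilde\tau}[\gamma_{t,v}(h+\rho_v-2J_v)-\gamma_{t,v}(\rho_v)]\ud X_v$ whose integrand is pointwise of size $O(h)$, so the Burkholder--Davis--Gundy inequality supplies an $L^{2p}$-bound of order $h$, as required.

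For part~(2), I would write $L_{t,\tau_2}-L_{t,\tau_1}=L_{t,\tau_1}(e^\Delta-1)$ with
\begin{align*}
\Delta\coloneqq\int_{\tau_1}^{\tau_2}\gamma_{t,v}(\rho_v)\ud\rho_v-\tfrac12\int_{\tau_1}^{\tau_2}\gamma_{t,v}(\rho_v)^2\ud v,
\end{align*}
and use $|e^\Delta-1|\le|\Delta|(1+e^\Delta)$. Since $L_{t,\tau_1}$ is $\cF^{\rho,J}_{\tau_1}$-measurable it pulls out of the conditional expectation, and conditional Cauchy--Schwarz separates $\E[\1_A\Delta^2\,|\,\cF^{\rho,J}_{\tau_1}]^{1/2}$ (with $A=\{J_{\tau_2}\le h\}$) from $\E[(1+e^\Delta)^2\,|\,\cF^{\rho,J}_{\tau_1}]^{1/2}$; the latter is bounded by an absolute constant via Lemma~\ref{lem:expbd} applied between $\tau_1$ and $\tau_2$ (absorbing $-\int\gamma^2\ud v\le 0$). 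For the former, decomposing $\Delta=2\int_{\tau_1}^{\tau_2}\gamma\ud J-\int_{\tau_1}^{\tau_2}\gamma\ud X-\tfrac12\int_{\tau_1}^{\tau_2}\gamma^2\ud v$ via $\ud\rho=2\ud J-\ud X$ yields three contributions: on $A$, $J_{\tau_2}-J_{\tau_1}\le h$ bounds the first piece pathwise by $2\|\gamma\|_\infty h$; conditional Itô isometry controls the Brownian piece by $\|\gamma\|_\infty(\E[\tau_2-\tau_1\,|\,\cF^{\rho,J}_{\tau_1}])^{1/2}$; and the pathwise bound $|\tfrac12\int\gamma^2\ud v|\le\tfrac12\|\gamma\|_\infty^2(\tau_2-\tau_1)$ delivers the $(\E[(\tau_2-\tau_1)^2\,|\,\cF^{\rho,J}_{\tau_1}])^{1/2}$-contribution. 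Assembling the pieces gives the stated bound.

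The main obstacle lies in part~(1): because $\rho-2J$ is not a martingale, the log-difference of $L^h$ and $L$ inevitably splits into a bounded-variation contribution governed by the $\ud J$ increments and a Brownian $\ud X$-contribution, and both must be shown to be of order $h$ simultaneously. The auxiliary stopping time $\tilde\tau=\tau\wedge\sigma_h$ is the crucial device: it converts the random restriction $\{J_\tau\le h\}$ into the deterministic pathwise estimate $J_v\le h$ on $[0,\tilde\tau]$, without which the BDG step on the Brownian piece would not produce a clean $O(h)$ rate.
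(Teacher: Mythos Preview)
Your proposal is correct and follows essentially the same route as the paper's proof: both use the elementary inequality $|e^a-e^b|\le(e^a+e^b)|a-b|$, decompose the log-difference via $\ud\rho=2\ud J-\ud X$ into a $\ud J$-piece controlled pathwise by $h$ on $\{J_\tau\le h\}$, a Brownian $\ud X$-piece handled by BDG/It\^o isometry, and a Lebesgue $\ud v$-piece handled by Lipschitz continuity, with Lemma~\ref{lem:expbd} supplying the exponential moment bounds. The only cosmetic difference is that you introduce the stopping time $\tilde\tau=\tau\wedge\sigma_h$ to enforce $J_v\le h$ on the integration interval, whereas the paper achieves the same effect by replacing $J$ with $J^h=J\wedge h$ inside the integrand; both devices convert the random constraint into a deterministic $O(h)$ bound on the integrand, and the remainder of the two arguments is identical.
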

\begin{proof}
In this proof the constants $C_\cR>0$ and $C_{p,\cR}>0$ may vary from line to line but they are independent of $t$ and $h$. We start by proving the first claim.

{\bf Proof of 1}. Using the inequality $\left| \e^x-\e^y\right|\leq \left(\e^x+\e^y\right)\left|x-y\right|$, we have
\[
\big|L^h_{t,\tau}-L_{t,\tau}\big|\leq \left(L^h_{t,\tau}+L_{t,\tau}\right)\left(I^{(1)}(\tau)+\frac{1}{2}I^{(2)}(\tau) \right),
\]
with 
\[
I^{(1)}(\tau)=\left|\int_0^\tau \gamma_{t,v}\left(h+\rho_v-2J_v\right)\ud\left(\rho_v-2J_v\right)-\int_0^\tau\gamma_{t,v}(\rho_v)\ud\rho_v\right|
\]
and
\[
I^{(2)}(\tau)=\int_0^\tau \left| \big(\gamma_{t,v}\left(h+\rho_v-2J_v\right)\big)^2-\big(\gamma_{t,v}(\rho_v)\big)^2\right|\ud v.
\]
Then, by Cauchy-Schwarz inequality and Lemma \ref{lem:expbd} we have, for $p>1$ and a constant $C_{p,\cR}>0$
\begin{align}\label{eq:I12}
\Big\|\1_{\{J_\tau\le h\}}\big(L^h_{t,\tau}-L_{t,\tau}\big)\Big\|_p\le C_{p,\cR} \Big(\big\|\1_{\{J_\tau\le h\}}I^{(1)}(\tau)\big\|_{2p}+\big\|\1_{\{J_\tau\le h\}}I^{(2)}(\tau)\big\|_{2p}\Big).
\end{align}
Now we estimate the two terms on the right-hand side of the expression above.

On $\{J_\tau\leq h\}$ we may replace $J$ with $J^h= J\wedge h$. Using that $|\gamma(t,y)|\le C_\cR$, for some $C_\cR>0$ (cf.\ Remark \ref{rem:gammaR}), we have
\[
\1_{\{J_{\tau}\leq h\}}I^{(1)}(\tau)\leq \left|\int_0^\tau \gamma_{t,v}\Big(\big(h+\rho_v-2J^h_v\big)-\gamma_{t,v}(\rho_v)\Big)\ud \big(\rho_v-2J_v\big)\right|
+2C_{\cR}h.
\]
Recall that $X=2J-\rho$ is a Brownian motion. Then Burkholder-Davis-Gundy inequalities and the Lipschitz continuity of $y\mapsto \gamma(t,y)$ yield
\begin{align*}
\Big\| \1_{\{J_\tau\leq h\}}I^{(1)}(\tau)\Big\|_{2p}&\leq C_{p,\cR}\Big\|\Big(\int_0^\tau \Big(
       \gamma_{t,v}\big(h+\rho_v-2J^h_v\big)-\gamma_{t,v}(\rho_v)\Big)^2\ud v\Big)^{1/2}\Big\|_{2p}+2C_{\cR}h\\
       &\leq C_{p,\cR}h\|\sqrt{\tau}\|_{2p}+2C_{\cR}h.
\end{align*}
We also have
\begin{align*}
\1_{\{J_{\tau}\leq h\}}I^{(2)}(\tau)& \leq \int_0^\tau \left| \gamma_{t,v}\big(h\!+\!\rho_v\!-\!2J^h_v\big)\!-\!\gamma_{t,v}(\rho_v)\right|\cdot
      \left| \gamma_{t,v}\big(h\!+\!\rho_v\!-\!2J^h_v\big)\!+\!\gamma_{t,v}\rho_v)\right|\ud v \leq 2C^2_{\cR}h\tau.
\end{align*}
Since $\tau\in[0,T_1]$, combining the two bounds above with \eqref{eq:I12} completes the proof.
\medskip

{\bf Proof of 2}. Consider stopping times $\tau_1$, $\tau_2$, with $0\leq \tau_1\leq \tau_2\leq T_1-t$. Noticing that 
\begin{align*}
\Big|\int_{\tau_1}^{\tau_2}\gamma_{t,v}(\rho_v)\ud \rho_v\Big|\le \Big|\int_{\tau_1}^{\tau_2}\gamma_{t,v}(\rho_v)\ud (\rho_v-2J_v)\Big|+2\int_{\tau_1}^{\tau_2}\big|\gamma_{t,v}(\rho_v)\big|\ud J_v,
\end{align*}
we easily deduce 
\[
\1_{\{J_{\tau_2}\leq h\}}\left| L_{t,\tau_2}-L_{t,\tau_1}\right|\leq \left(L_{t,\tau_1}+L_{t,\tau_2}\right)\left(I^{(1)}(\tau_1,\tau_2)+\frac{1}{2}I^{(2)}(\tau_1,\tau_2)\right),
\]
with
\begin{align*}
I^{(1)}(\tau_1,\tau_2)&=\left| \int_{\tau_1}^{\tau_2}\gamma_{t,v}(\rho_v)\ud \left(\rho_v-2J_v\right)\right|
+2C_{\cR}h,\\
I^{(2)}(\tau_1,\tau_2)&=\int_{\tau_1}^{\tau_2}\left(\gamma_{t,v}(\rho_v)\right)^2\ud v \leq C_{\cR}^2\left(\tau_2-\tau_1\right).
\end{align*}
Here, we have used $\int_{\tau_1}^{\tau_2}|\gamma_{t,v}(\rho_v)|\ud J_v\leq C_{\cR}h$ on $\{J_{\tau_2}\leq h\}$. Since $X=\rho-2J$ is a Brownian motion with respect to the filtration $(\cF^{\rho,J}_t)_{t\ge 0}$, by It\^o isometry we obtain
\begin{align*}
\E\Big[\Big(\int_{\tau_1}^{\tau_2}\gamma_{t,v}(\rho_v)\ud \left(\rho_v-2J_v\right)\Big)^2\Big|\cF^{\rho,J}_{\tau_1}\Big]=\E\Big[\int_{\tau_1}^{\tau_2}\big(\gamma_{t,v}(\rho_v)\big)^2\ud v\Big|\cF^{\rho,J}_{\tau_1}\Big]\le C^2_\cR\E\big[(\tau_2-\tau_1)\big|\cF^{\rho,J}_{\tau_1}\big]
\end{align*}
Moreover, Cauchy-Schwarz inequality yields
\begin{align*}
\begin{aligned}
&\E\Big[\1_{\{J_{\tau_2}\leq h\}}\big| L_{t,\tau_2}-L_{t,\tau_1}\big|\Big|\cF^{\rho,J}_{\tau_1}\Big]\\
&\le C_\cR L_{t,\tau_1}\E\Big[\big(1\!+\!L_{t,\tau_2}/L_{t,\tau_1}\big)^2\Big|\cF^{\rho,J}_{\tau_1}\Big]^\frac12\Big(h\!+\!\E\big[\left(\tau_2\!-\!\tau_1\right)\big|\cF^{\rho,J}_{\tau_1}\big]^\frac12\!+\!\E\big[\left(\tau_2\!-\!\tau_1\right)^2\big|\cF^{\rho,J}_{\tau_1}\big]^\frac12\Big).
\end{aligned}
\end{align*}
Since $\E[(L_{t,\tau_2}/L_{t,\tau_1})^2|\cF^{\rho,J}_{\tau_1}]\le C_{\cR}$, thanks to Lemma \ref{lem:expbd}, the proof is complete.
\end{proof}
In the next lemma, we establish  an important intermediate estimate for $V^h_1(t)$ and $V^h_s(t)$
in which we replace all terms depending on $h$, but for the sets $B^h_1$ and $B^h_s$, with their limits as $h$ goes to 0.
The term $V^h_2(t)$ will be treated in a separate statement.
\begin{lemma}\label{lem:barVh}
For $T_2\in[0,T_1)$, we have
\begin{align}\label{eq:barVh.0}
\lim_{h\to 0}\sup_{0\le t\le T_2}h^{-1}\big|V^h_1(t)-\bar V_1^h(t)\big|=0,\quad \lim_{h\to 0}
                                                \sup_{0\le t\le T_2}h^{-1}\int_0^{T_1-t}\big|V^h_s(t)-\bar V^h_s(t)\big|\ud s=0,
\end{align}
with  $\bar V_1^h(t)\coloneqq\E\big[\1_{B_1^h}L_{t,T_1-t}\delta^{(1)}_{t,T_1-t}(\rho)\big]$ and
$\bar V_s^h(t)\coloneqq\E\big[\1_{B^{h}_s} L_{t,s} \delta^{(3)}_{t,s}(\rho)\big]$, for $s\in[0,T_1-t]$.
\end{lemma}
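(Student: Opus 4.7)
The plan is to reduce both claims to a common two-step interpolation. For each $t\in[0,T_2]$ and for both $V^h_1(t)$ and $V^h_s(t)$, write
\[
V^h_i(t) - \bar V^h_i(t) = \mathrm I^h_i(t) + \mathrm{II}^h_i(t),
\]
where, taking $i=1$ as the template,
\begin{align*}
\mathrm I^h_1(t) &= \E\big[\1_{B^h_1}(L^h_{t,T_1-t}-L_{t,T_1-t})\,\delta^{(1)}_{t,T_1-t}(h+\rho-2J)\big],\\
\mathrm{II}^h_1(t) &= \E\big[\1_{B^h_1}L_{t,T_1-t}\bigl(\delta^{(1)}_{t,T_1-t}(h+\rho-2J)-\delta^{(1)}_{t,T_1-t}(\rho)\bigr)\big],
\end{align*}
and analogously for $i=s$ with $\delta^{(3)}$ in place of $\delta^{(1)}$ and time horizon $s$ in place of $T_1-t$. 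The goal is to show that each piece is $o(h)$ uniformly in $t\in[0,T_2]$, and, for the second statement, that the integral over $s\in[0,T_1-t]$ of the two pieces is also $o(h)$.

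For the first piece $\mathrm I^h_i$, the uniform bound $|\delta^{(i)}|\le M_\delta$ from \eqref{eq:deltabound} and Cauchy--Schwarz give
\[
|\mathrm I^h_i(t)| \le M_\delta\,\sqrt{\P(J_\tau\le h)}\;\bigl\|\1_{\{J_\tau\le h\}}(L^h_{t,\tau}-L_{t,\tau})\bigr\|_2,
\]
with $\tau=T_1-t$ for $i=1$ and $\tau=s$ for $i=s$. The second factor is $\le C h$ by Lemma \ref{lem:Lh}(1) with $p=2$; for the first, the conditional uniformity \eqref{eq:unifJ}, combined with the scaling $\rho_\tau=\sqrt\tau\,\rho_1$ in law and \eqref{eq:rhoint}, yields $\P(J_\tau\le h)\le h\,\E[1/\rho_\tau]\le Ch\,\tau^{-1/2}$, hence $\sqrt{\P(J_\tau\le h)}\le C\sqrt h\,\tau^{-1/4}$. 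Therefore $|\mathrm I^h_1(t)|\le Ch^{3/2}(T_1-T_2)^{-1/4}=o(h)$ uniformly in $t\in[0,T_2]$, and $|\mathrm I^h_s(t)|\le Ch^{3/2}s^{-1/4}$, whose integral over $s\in[0,T_1-t]$ is $O(h^{3/2})$.

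For the second piece $\mathrm{II}^h_i$, note first that on $B^h_1$ (resp.\ $B^h_s$) we have $J_v=J_v\wedge h$ for all $v\le T_1-t$ (resp.\ $v\le s$), so Lemma \ref{lem:delta}(1) bounds the $\delta^{(i)}$-difference uniformly by $\varepsilon_0(h)$. The key technical step is then the sharp estimate
\[
\E\bigl[\1_{\{J_\tau\le h\}}L_{t,\tau}\bigr]\le Ch\,\tau^{-1/2},
\]
which relies on the observation that $L_{t,\tau}$ is $\cF^\rho_\tau$-measurable: indeed $\rho$ is a semimartingale in its own filtration by \eqref{eq:SDErho}, since the innovation $\beta^0_t=\rho_t-\int_0^t\rho_s^{-1}\ud s$ is a $(\cF^\rho_t)$-Brownian motion, so both the It\^o integral $\int_0^\tau\gamma_{t,v}(\rho_v)\,\ud\rho_v$ and the Lebesgue integral $\int_0^\tau\gamma^2_{t,v}(\rho_v)\,\ud v$ are $\cF^\rho_\tau$-measurable. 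The tower property and \eqref{eq:unifJ} then give $\E[\1_{\{J_\tau\le h\}}L_{t,\tau}]\le h\,\E[L_{t,\tau}/\rho_\tau]$, and H\"older with Lemma \ref{lem:expbd} (for $\|L_{t,\tau}\|_p$) and \eqref{eq:rhoint} (for $\|1/\rho_\tau\|_q$) finishes the estimate. Consequently $|\mathrm{II}^h_1(t)|\le \varepsilon_0(h)\cdot Ch(T_1-T_2)^{-1/2}=o(h)$ uniformly, and $\int_0^{T_1-t}|\mathrm{II}^h_s(t)|\,\ud s \le \varepsilon_0(h)\cdot Ch\int_0^{T_1}s^{-1/2}\,\ud s = o(h)$.

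The main obstacle is precisely this sharp bound $\E[\1_{\{J_\tau\le h\}}L_{t,\tau}]=O(h\tau^{-1/2})$: a direct Cauchy--Schwarz would yield only an $O(\sqrt h)$ factor, which, multiplied by $\varepsilon_0(h)$ (about which only $\varepsilon_0(h)\to 0$ is known), is not $o(h)$ in general. The $\cF^\rho_\tau$-measurability of $L_{t,\tau}$ --- exploiting that the Bessel innovation can be recovered from the path of $\rho$ alone --- is exactly what upgrades the estimate to $O(h)$ and makes the $\mathrm{II}^h$ bound go through.
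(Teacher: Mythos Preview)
Your proof is correct and follows essentially the same approach as the paper's: the same two-term decomposition $\mathrm I^h_i+\mathrm{II}^h_i$, H\"older (the paper uses a general exponent $p$ where you use Cauchy--Schwarz) together with Lemma~\ref{lem:Lh}(1) for the first piece, and Lemma~\ref{lem:delta}(1) followed by the conditioning argument $\E[\1_{\{J_\tau\le h\}}L_{t,\tau}]\le h\,\E[L_{t,\tau}/\rho_\tau]$ for the second. Your explicit justification of the $\cF^\rho_\tau$-measurability of $L_{t,\tau}$ via the Bessel SDE is a point the paper uses implicitly but does not spell out.
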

\begin{proof}
We first deal with $V^h_1(t)$. We have
\begin{align*}
V^h_1(t)\!-\!\bar V^h_1(t) =\E\left[\1_{B^h_1}\left(L^h_{t,T_1-t}\delta^{(1)}_{t,T_1-t}(h\!+\!\rho\!-\!2J)\!-\!
      L_{t,T_1-t}\delta^{(1)}_{t,T_1-t}(\rho) \right) \right]=I_1(h)\!+\!I_2(h),
\end{align*}
with
\[
I_1(h)=\E\left[\1_{B^h_1}\left(L^h_{t,T_1-t}-L_{t,T_1-t}\right)\delta^{(1)}_{t,T_1-t}(h+\rho-2J)\right],
\]
and
\[
I_2(h)=\E\left[
     \1_{B^h_1} L_{t,T_1-t}\left(\delta^{(1)}_{t,T_1-t}(h+\rho-2J)-\delta^{(1)}_{t,T_1-t}(\rho) \right) \right].
\]
Recall $B^h_1=\big\{J_{T_1-t}\le h\big\}\cap\big\{\theta_h\ge T_1-t\big\}\subset 
             \big\{J_{T_1-t}\leq h\big\}$.  
Using \eqref{eq:deltabound}, H\"older's inequality and Lemma \ref{lem:Lh}-(1) with $\tau=T_1-t$, we have
\begin{align*}
|I_1(h)|&\leq\E\left[\1_{B^h_1}\left|L^h_{t,T_1-t}-L_{t,T_1-t}\right|\cdot\left|\delta^{(1)}_{t,T_1-t}(h+\rho-2J)\right|\right]\\
       &\leq M_\delta\big(\P(B^h_1)\big)^{1-1/p}\left\| \1_{\{J_{T_1-t}\leq h\}}\big|L^h_{t,T_1-t}-L_{t,T_1-t}\big|\right\|_p\\
       &\leq M_\delta C_{p,\cR}h\left(\P\left(J_{T_1-t}\leq h\right)\right)^{1-1/p}\leq h M_\delta C_{p,\cR}\left(\P\left(J_{T_1-T_2}\leq h\right)\right)^{1-1/p},
\end{align*}
where the final inequality holds because $s\mapsto J_s$ is non-decreasing.

For the remaining term, using Lemma \ref{lem:delta}, tower property and \eqref{eq:unifJ} we obtain
\begin{align*}
|I_2(h)|&\leq \E\Big[
      L_{t,T_1-t}\big|\delta^{(1)}_{t,T_1-t}(h\!+\!\rho\!-\!2J)\!-\!\delta^{(1)}_{t,T_1-t}(\rho) \big|\1_{\{J_{T_1-t}\le h\}} \Big]\leq \varepsilon_0(h)\E\big[\1_{\{J_{T_1-t}\leq h\}}L_{t,T_1-t}\big]\\
      &=\varepsilon_0(h)\E\big[\P\big(J_{T_1-t}\leq h\big|\cF^\rho_{T_1-t}\big)L_{t,T_1-t}\big]\le\varepsilon_0(h)h\E\big[(\rho_{T_1-t})^{-1}L_{t,T_1-t}\big].
\end{align*}
Using H\"older inequality and the scaling in law $\rho_{T_1-t}=\sqrt{T_1-t}\rho_1$, we have for $p\in(1,3)$
\[
\E\Big[\frac{L_{t,T_1-t}}{\rho_{T_1-t}}\Big]\leq \left\|\frac{1}{\rho_{T_1-t}}\right\|_p\big\| L_{t,T_1-t}\big\|_{\frac{p}{p-1}}
\leq \frac{1}{\sqrt{T_1-T_2}}\left\|\frac{1}{\rho_1}\right\|_p\big\| L_{t,T_1-t}\big\|_{\frac{p}{p-1}}\le \frac{C_{p,\cR}}{\sqrt{T_1-T_2}}.
\]
for a constant $C_{p,\cR}>0$ and where the final inequality follows by \eqref{eq:rhoint} and Lemma \ref{lem:expbd}.

Combining the bounds on $I_1(h)$ and $I_2(h)$ we obtain for some other constant $C_{p,\cR}>0$
\[
\sup_{0\le t\le T_2}h^{-1}\big|V^h_1(t)-\bar V_1^h(t)\big|\leq C_{p,\cR}\Big(
     \big(\P(J_{T_1-T_2}\leq h)\big)^{1-1/p}+\frac{\varepsilon_0(h)}{\sqrt{T_1-T_2}}\Big).
\]
Since \eqref{eq:unifJ} and \eqref{eq:rhoint} imply $\P(J_{T_1-T_2}\leq h)\le h\E[(\rho_{T_1-T_2})^{-1}]\to 0$ as $h\to 0$, the first claim in the lemma holds.
\medskip

Next, we use the same approach to study $V^h_s(t)$. Since $B^h_s\subset \{J_s\leq h\}$, we have
\begin{align*}
\big| V^h_s(t)-\bar V^h_s(t)\big|&\leq
M_\delta\E\Big[\1_{B^h_s}\big| L^h_{t,s}-L_{t,s}\big|\Big]
+\E\Big[\1_{B^h_s}L_{t,s}\big|\delta^{(3)}_{t,s}(h+\rho-2J)- \delta^{(3)}_{t,s}(\rho)\big|\Big]\\
&\leq
      M_\delta \E\Big[\1_{\{J_s\leq h\}}\big| L^h_{t,s}-L_{t,s}\big|\Big]
+\varepsilon_0(h)\E\big[\1_{\{J_s\leq h\}}L_{t,s}\big] \\
&\leq
  M_\delta\big(\P(J_s\leq h)\big)^{1-1/p}\left\| \1_{\{J_s\le h\}}\big(L^h_{t,s}-L_{t,s}\big)\right\|_p+
     \varepsilon_0(h)h\E\Big[(\rho_s)^{-1}L_{t,s}\Big],
\end{align*}
where in the final inequality we used H\"older's inequality, tower property with $\cF^\rho_s$ and \eqref{eq:rhoint}.
Noticing that $\P(J_s\leq h)\leq h\E[\rho_s^{-1}]=h s^{-1/2}\E[\rho_1^{-1}]$ and $\E[L_{t,s}/\rho_s]\leq s^{-1/2}\|L_{t,s}\|_{(p-1)/p}\|\rho_1^{-1}\|_p$, for $p\in(1,3)$, the second claim in the lemma follows easily by Lemma \ref{lem:expbd} and Lemma \ref{lem:Lh}.
\end{proof}

The next lemma gives a delicate preliminary estimate on the behaviour of $V_2^h(t)$ from \eqref{eq:V1h}. 
\begin{lemma}\label{lem:V2h}
For $T_2\in[0,T_1)$, we have
\begin{align}\label{eq:V2h.0}
\lim_{h\downarrow 0}\sup_{0\leq t\leq T_2}h^{-1}\big|V^h_2(t)-\bar V_2^h(t)\big|=0,
\end{align}
with
\[
\bar V_2^h(t)\coloneqq\E\Big[L_{t,\tau^{-h}_\varphi}\delta^{(2)}_{t,\tau^{-h}_\varphi}(\rho)\1_{\{\tau^{-h}_\varphi<T_1-t\}}
\1_{\{J_{\tau^{-h}_\varphi}\le h\}}\Big],
\]
where $\tau_\varphi^{-h}$ is defined in \eqref{eq:tauphi} with $\varphi(s)=\varphi_t(s)\coloneqq c((t+s)\wedge T_1)-y_2$.
\end{lemma}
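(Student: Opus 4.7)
\medskip

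\noindent\textbf{Proof plan.} The goal is to show $V^h_2(t)-\bar V^h_2(t)=o(h)$ uniformly in $t\in[0,T_2]$, and the natural strategy is a telescoping replacement followed by a symmetric-difference estimate for the indicator sets. Concretely, I would write
\[
V^h_2(t)-\bar V^h_2(t)=T_1(h,t)+T_2(h,t)+T_3(h,t)+T_4(h,t)+T_5(h,t),
\]
corresponding to the successive substitutions: $L^h_{t,\theta_h}\mapsto L_{t,\theta_h}$ (term $T_1$); $\delta^{(2)}_{t,\theta_h}(h+\rho-2J)\mapsto \delta^{(2)}_{t,\theta_h}(\rho)$ ($T_2$); $L_{t,\theta_h}\mapsto L_{t,\tau^{-h}_\varphi}$ ($T_3$); $\delta^{(2)}_{t,\theta_h}(\rho)\mapsto \delta^{(2)}_{t,\tau^{-h}_\varphi}(\rho)$ ($T_4$); and $\1_{B^h_2}\mapsto \1_{B_*}$ ($T_5$), where $B_*\coloneqq\{\tau^{-h}_\varphi<T_1-t\}\cap\{J_{\tau^{-h}_\varphi}\le h\}$. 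Throughout one uses the key observation that, on $\{\tau^{-h}_\varphi<T_1-t\}$, the identity $\rho_{\tau^{-h}_\varphi}=y_2-c(t+\tau^{-h}_\varphi)-h\ge \bar c-h$ holds with $\bar c=\min_{s\in[0,T_1]}(y_2-c(s))>0$, and hence \eqref{eq:unifJ} gives
\[
\E\!\left[\1_{B_*}L_{t,\tau^{-h}_\varphi}\right]=\E\!\left[L_{t,\tau^{-h}_\varphi}\1_{\{\tau^{-h}_\varphi<T_1-t\}}\min\!\Big(\tfrac{h}{\rho_{\tau^{-h}_\varphi}},1\Big)\right]\le \tfrac{h}{\bar c-h}\,\E\!\left[L_{t,\tau^{-h}_\varphi}\right]=O(h),
\]
uniformly in $t\in[0,T_2]$. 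In particular $\P(B^h_2)\le \P(B_*)=O(h)$, since $\tau^{-h}_\varphi\le \theta_h$ and $J$ is non-decreasing.

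\medskip

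The estimates for $T_1$--$T_4$ are then essentially combinations of the tools already at our disposal. For $T_1$, Cauchy--Schwarz combined with Lemma~\ref{lem:Lh}(1) (applied with $p=2$) yields $|T_1|\le M_\delta\sqrt{\P(B^h_2)}\,\|\1_{\{J_{\theta_h}\le h\}}(L^h_{t,\theta_h}-L_{t,\theta_h})\|_2=O(h^{3/2})$. For $T_2$, Lemma~\ref{lem:delta}(1) gives $|T_2|\le \varepsilon_0(h)\E[\1_{B^h_2}L_{t,\theta_h}]$; conditioning on $\cF^{\rho,J}_{\tau^{-h}_\varphi}$ and applying Lemma~\ref{lem:expbd} shows $\E[L_{t,\theta_h}\mid\cF^{\rho,J}_{\tau^{-h}_\varphi}]\le C L_{t,\tau^{-h}_\varphi}$, so the preceding bound on $\E[\1_{B_*}L_{t,\tau^{-h}_\varphi}]$ gives $|T_2|=O(\varepsilon_0(h)\cdot h)=o(h)$. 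For $T_3$, I would use Lemma~\ref{lem:Lh}(2) with $\tau_1=\tau^{-h}_\varphi$, $\tau_2=\theta_h\wedge(T_1-t)$, together with the crucial comparison $\1_{\{J_{\tau^{-h}_\varphi}\le h\}}(\theta_h-\tau^{-h}_\varphi)\le \sigma^\beta_{2h}$ from Lemma~\ref{lem:3dBp2} and the independence of $\sigma^\beta_{2h}$ from $\cF^{\rho,J}_{\tau^{-h}_\varphi}$; a direct moment computation on $\sigma^\beta_{2h}$ using its explicit density (as in the proof of Proposition~\ref{prop:dotv}) gives $\E[\sigma^\beta_{2h}]+\E[(\sigma^\beta_{2h})^2]=O(h)$, whence $|T_3|=O(h\cdot (h+\sqrt{h}))=O(h^{3/2})$. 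The term $T_4$ is handled analogously by Lemma~\ref{lem:delta}(2), dominated convergence ($\varepsilon_1(\sigma^\beta_{2h})\to 0$ a.s.\ by Remark~\ref{rem:sigma}) and the independence of $\sigma^\beta_{2h}$ from the $\cF^{\rho,J}_{\tau^{-h}_\varphi}$-measurable factor, giving $|T_4|\le Ch\cdot\E[\varepsilon_1(\sigma^\beta_{2h})]=o(h)$.

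\medskip

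The genuine difficulty is $T_5$, which requires showing $\E[\1_{B_*\setminus B^h_2}L_{t,\tau^{-h}_\varphi}]=o(h)$. Here $B_*\setminus B^h_2\subset \big(B_*\cap\{\theta_h\ge T_1-t\}\big)\cup \big(B_*\cap\{J_{\theta_h}>h,\theta_h<T_1-t\}\big)$, and each contribution must be analyzed separately. For the first set, on $B_*$ one has $\sigma^\beta_{2h}\ge T_1-t-\tau^{-h}_\varphi$, so splitting according to whether $T_1-t-\tau^{-h}_\varphi$ is smaller or larger than a small parameter $\eta>0$ and using Lemma~\ref{lem:pbdtauh} (with the uniform lower bound $T_1-t\ge T_1-T_2>0$) to control $\P(\tau^{-h}_\varphi\in[T_1-t-\eta,T_1-t))$ produces a bound of the form $Ch\cdot\eta^{1/4}+Ch\cdot\P(\sigma^\beta_{2h}\ge\eta)$. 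For the second set, the key observation is that $B_*\cap\{J_{\theta_h}>h\}$ forces $X_{\tau^{-h}_\varphi+\cdot}-X_{\tau^{-h}_\varphi}=-\beta_{\cdot}$ to cross level $\bar c-2h\ge \bar c/2$ within time $\sigma^\beta_{2h}$; a further split at $\sigma^\beta_{2h}\le\eta$ bounds the contribution by $Ch\cdot \P(\sup_{r\le\eta}|\beta_r|\ge\bar c/2)+Ch\cdot\P(\sigma^\beta_{2h}\ge\eta)$. Letting $h\to 0$ first and then $\eta\to 0$ yields $T_5/h\to 0$ uniformly in $t\in[0,T_2]$. The combination of these five estimates delivers \eqref{eq:V2h.0}. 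The main obstacle, as anticipated, is this last step: one has to weaponise both the almost sure convergence $\sigma^\beta_{2h}\downarrow 0$ and the quantitative moment bounds on $\sigma^\beta_{2h}$ while simultaneously exploiting the strict separation $c(\cdot)<y_2$ on $[0,T_1]$.
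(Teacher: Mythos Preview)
Your proof plan is correct and follows essentially the same strategy as the paper: the paper groups your terms $T_1,T_2$ into a first step (replacing $V^h_2$ by an auxiliary $U^h_2$) and your $T_3,T_4,T_5$ into a second step (replacing $U^h_2$ by $\bar V^h_2$), with the individual estimates---Lemma~\ref{lem:Lh}(1) and (2), Lemma~\ref{lem:delta}, Lemma~\ref{lem:3dBp2}, Lemma~\ref{lem:pbdtauh}, and the lower bound $\rho_{\tau^{-h}_\varphi}\ge \bar c$---matching yours line by line. One minor caveat: $\sigma^\beta_{2h}$ can equal $+\infty$ with positive probability (it is the first passage of $\beta_s-c_\varphi s$ to level $2h$, and $c_\varphi>0$), so your moment claim should be read for $\sigma^\beta_{2h}\wedge T_1$; the paper uses this truncated form and only needs $\E[\sigma^\beta_{2h}\wedge T_1]\to 0$ via Remark~\ref{rem:sigma} and dominated convergence, not the sharper $O(h)$ rate you assert.
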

\begin{proof}
Recall that $B^h_2=\{\theta_h<T_1-t\}\cap\{J_{\theta_h}\le h\}$ and
\[
\theta_h=\inf\{s\in[0,\infty):h+\rho_s-2J_s+c((t+s)\wedge T_1)\ge y_2\}.
\] 
Then, in the notations of Lemma \ref{lem:3dBp2}, $\theta_h=\hat{\tau}^{-h}_\varphi$ with $\varphi(s)=\varphi_t(s)$, and ${\tau}^{-h}_\varphi\leq  \theta_h\le \tau^{-h}_{\varphi}+\sigma^\beta_{2h}$ on the event $\{\tau^{-h}_\varphi<T_1-t\}\cap\{J_{\tau^{-h}_\varphi}\le h\}$. 
Let us introduce an auxiliary function:
\begin{align}
\begin{aligned}
U^h_2(t)\coloneqq\E\big[L_{t,\theta_h}\delta^{(2)}_{t,\theta_h}(\rho)\1_{\{\theta_h<T_1-t\}}\1_{\{J_{\theta_h}\le h\}}\big]=\E\big[L_{t,\theta_h}\delta^{(2)}_{t,\theta_h}(\rho)\1_{B^h_2}\big].
\end{aligned}
\end{align}
Now we argue in two steps.

{\bf Step 1}. Following similar arguments as those in the proof of Lemma \ref{lem:barVh}, we show that 
\begin{align}
\begin{aligned}\label{eq:limUhVh}
\lim_{h\downarrow 0}\sup_{0\le t\le T_2}h^{-1}\big|V^h_2(t)-U^h_2(t)\big|=0.
\end{aligned}
\end{align}
Using triangular inequality, \eqref{eq:deltabound} and Lemma \ref{lem:delta}-(2) we have
\begin{align}\label{eq:VhUh}
\begin{aligned}
&\big|V^h_2(t)-U^h_2(t)\big|\\
&\le \E\big[\1_{B^h_2}\big|L^h_{t,\theta_h}\!-\!L_{t,\theta_h}\big|\cdot\big|\delta^{(2)}_{t,\theta_h}(h\!+\!\rho\!-\!2J)\big|\big]\!+\!\E\big[\1_{B^h_2}L_{t,\theta_h}\big|\delta^{(2)}_{t,\theta_h}(h\!+\!\rho\!-\!2J)\!-\!
  \delta^{(2)}_{t,\theta_h}(\rho)\big|\big]\\
&\le M_\delta\E\big[\1_{B^h_2}\big|L^h_{t,\theta_h}\!-\!L_{t,\theta_h}\big|\big]\!+\!\eps_0(h)\E\big[\1_{B^h_2}L_{t,\theta_h}\big].
\end{aligned}
\end{align}
For the first term on the right-hand side above we use H\"older's inequality with $p\in(1,\infty)$ and Lemma \ref{lem:Lh}-(1) with $\tau=\theta_h\wedge(T_1-t)$, jointly with $\theta_h\geq \tau^{-h}_{\varphi}$ and $J_{\theta_h}\ge J_{\tau^{-h}_\varphi}$ to obtain
\begin{align*}
 \E\big[\1_{B^h_2}\big|L^h_{t,\theta_h}-L_{t,\theta_h}\big|\big]&\le C_{p,\cR}\left\|\1_{\{J_{\theta_h}\le h\}}\big(L^h_{t,\theta_h}-L_{t,\theta_h}\big)\right\|_p\P\big(B^h_2\big)^{1-1/p}\\
&\leq C_{p,\cR}h\left(\P\big( \tau^{-h}_{\varphi}<T_1\!-\!t,J_{ \tau^{-h}_{\varphi}}\leq h\big)\right)^{1-1/p}\\
&\leq C_{p,\cR}h\Big(\E\big[\1_{\{\tau^{-h}_{\varphi}<T_1-t\}}\E[\1_{\{J_{\tau^{-h}_\varphi}\le h\}}|\cF^\rho_{\tau^{-h}_\varphi}]\big]\Big)^{1-1/p}\\ 
&\leq C_{p,\cR}h^{2-\frac1p}\E\big[\1_{\{\tau^{-h}_{\varphi}<T_1-t\}}(\rho_{ \tau^{-h}_{\varphi}})^{-1}\big]^{1-1/p},
\end{align*}
where $C_{p,\cR}>0$ is a constant and in the final inequality we used also \eqref{eq:unifJ}. It is worth noticing that on $\{\tau^{-h}_{\varphi}<T_1-t\}$ we have $\rho_{\tau^{-h}_\varphi}=-\varphi(\tau^{-h}_\varphi)-h$. Since $\varphi(s)=c((t+s)\wedge T_1)-y_2<0$ for all $s\ge 0$ and we can take $h>0$ arbitrarily small, then there is no loss of generality in assuming that 
\begin{align}\label{eq:boundrho}
\rho_{\tau^{-h}_\varphi}\ge \bar c>0,\quad\text{on $\{\tau^{-h}_\varphi<T_1-t\}$}, 
\end{align}
for some constant $\bar c>0$ independent of $h$ and $t$ (cf.\ Remark \ref{rem:welldef}). Thus 
\[
\E\big[\1_{B^h_2}\big|L^h_{t,\theta_h}-L_{t,\theta_h}\big|\big]\le (C_{p,\cR}/\bar c) h^{2-\frac1p}.
\]
For the second term on the right-hand side of \eqref{eq:VhUh}, recalling $\theta_h\ge \tau^{-h}_\varphi$ we obtain
\begin{align*}
\begin{aligned}
\E\big[\1_{B^h_2}L_{t,\theta_h}\big]&\le \E\big[\1_{\{\tau^{-h}_\varphi<T_1-t\}\cap\{J_{\tau^{-h}_\varphi}\le h\}}L_{t,\theta_h\wedge(T_1-t)}\big]\\
&\le\E\bigg[\1_{\{\tau^{-h}_\varphi<T_1-t\}\cap\{J_{\tau^{-h}_\varphi}\le h\}}L_{t,\tau^{-h}_\varphi}\E\Big[\exp\Big(\int_{\tau^{-h}_\varphi}^{\theta_h\wedge(T_1-t)}\gamma_{t,v}(\rho_v)\ud \rho_v\Big)\Big|\cF^{\rho,J}_{\tau^{-h}_\varphi}\Big]\bigg]\\
&\le C_\cR\E\big[\1_{\{\tau^{-h}_\varphi<T_1-t\}\cap\{J_{\tau^{-h}_\varphi}\le h\}}L_{t,\tau^{-h}_\varphi}\big]\\
&= C_\cR\E\big[\1_{\{\tau^{-h}_\varphi<T_1-t\}}L_{t,\tau^{-h}_\varphi}\P(J_{\tau^{-h}_\varphi}\le h|\cF^\rho_{\tau^{-h}_\varphi})\big]\\
&\le C_\cR h\E\big[\1_{\{\tau^{-h}_\varphi<T_1-t\}}L_{t,\tau^{-h}_\varphi}(\rho_{\tau^{-h}_\varphi})^{-1}\big]\le (C_\cR/\bar c) h \E\big[L_{t,\tau^{-h}_\varphi}\big],
\end{aligned}
\end{align*}
where we used Lemma \ref{lem:expbd} for the second inequality, \eqref{eq:unifJ} for the third one and \eqref{eq:boundrho} for the final one.
Combining the above estimates we obtain
\[
\big|V^h_2(t)-U^h_2(t)\big|\le C_{p,\cR}h\big(h^{1-\frac1p}+\eps_0(h)),
\]
for some other constant that we denote again by $C_{p,\cR}>0$. That proves \eqref{eq:limUhVh}.
\medskip

{\bf Step 2}. Now we show that 
\begin{align}\label{eq:limUhbarVh}
\begin{aligned}
\lim_{h\to 0}\sup_{0\le t\le T_2}h^{-1}\big|U^h_2(t)-\bar V^h_2(t)\big|=0.
\end{aligned}
\end{align}
We have
\begin{align}\label{UhVhbarbis}
\begin{aligned}
U^h_2(t)-\bar V^h_2(t)=\E\Big[L_{t,\theta_h}\delta^{(2)}_{t,\theta_h}(\rho)
                            \1_{\{\theta_h<T_1-t\}}\1_{\{J_{\theta_h}\le h\}}\Big]
                           - \E\Big[L_{t,\tau^{-h}_\varphi}\delta^{(2)}_{t,\tau^{-h}_\varphi}(\rho)
                            \1_{\{\tau^{-h}_\varphi<T_1-t\}}\1_{\{J_{\tau^{-h}_\varphi}\le h\}}  \Big],
\end{aligned}
\end{align}
and using triangular inequality we obtain
\begin{align}\label{eq:UhVhbar}
\begin{aligned}
\big|U^h_2(t)-\bar V^h_2(t)\big|
 \le&\E\left[\left|L_{t,\theta_h}\delta^{(2)}_{t,\theta_h}(\rho)-L_{t,\tau^{-h}_\varphi}\delta^{(2)}_{t,\tau^{-h}_\varphi}(\rho)\right|\1_{\{\theta_h<T_1-t\}\cap\{J_{\theta_h}\le h\}}\right]\\
&+\E\left[L_{t,\tau^{-h}_\varphi}\big|\delta^{(2)}_{t,\tau^{-h}_\varphi}(\rho)\big|\left|\1_{\{\theta_h<T_1-t\}\cap\{J_{\theta_h}\le h\}}-\1_{\{\tau^{-h}_\varphi<T_1-t\}\cap\{J_{\tau^{-h}_\varphi}\le h\}} \right|\right]\\
&\leq M_\delta\E\left[\left|L_{t,\theta_h}-L_{t,\tau^{-h}_\varphi}\right|\1_{\{\theta_h<T_1-t\}\cap\{J_{\theta_h}\le h\}}\right]\\
&\quad +\E\left[L_{t,\tau^{-h}_\varphi}\left|\delta^{(2)}_{t,\theta_h}(\rho)-\delta^{(2)}_{t,\tau^{-h}_\varphi}(\rho) \right| \1_{\{\theta_h<T_1-t\}\cap\{J_{\theta_h}\le h\}}\right]\\
&\quad+M_\delta \E\left[L_{t,\tau^{-h}_\varphi}\left|\1_{\{\theta_h<T_1-t\}\cap\{J_{\theta_h}\le h\}}-\1_{\{\tau^{-h}_\varphi<T_1-t\}\cap\{J_{\tau^{-h}_\varphi}\le h\}} \right|\right].
\end{aligned}
\end{align}

We study each term individually.
Noticing that $\{J_{\theta_h}\le h\}\subset\{J_{\tau^{-h}_\varphi}\le h\}$ and $\tau^{-h}_\varphi\le \theta_h$ we have
\begin{align*}
\begin{aligned}
&\E\left[\left|L_{t,\theta_h}-L_{t,\tau^{-h}_\varphi}\right|\1_{\{\theta_h<T_1-t\}\cap\{J_{\theta_h}\le h\}}\right]\\
&\leq
\E\left[\left|L_{t,\theta_h\wedge(T_1-t)}-L_{t,\tau^{-h}_\varphi}\right|\1_{\{\tau^{-h}_\varphi<T_1-t\}\cap\{J_{\theta_h\wedge(T_1-t)}\leq h\}}\right]\\
&=
\E\left[\E\Big[\big|L_{t,\theta_h\wedge(T_1-t)}-L_{t,\tau^{-h}_\varphi}\big|\1_{\{J_{\theta_h\wedge(T_1-t)}\leq h\}}\Big|\cF^{\rho,J}_{\tau^{-h}_\varphi}\Big]\1_{\{\tau^{-h}_\varphi<T_1-t\}\cap\{J_{\tau^{-h}_\varphi}\le h\}}\right].
\end{aligned}
\end{align*}
To simplify the notation we set $\tilde \theta_h=\theta_h\wedge(T_1-t)$ and $\tilde \tau^{-h}_\varphi=\tau^{-h}_\varphi\wedge(T_1-t)$. Then, using Lemma \ref{lem:Lh}-(2) with $\tau_1=\tilde\tau^{-h}_\varphi\le \tilde \theta_h=\tau_2$ and Lemma \ref{lem:3dBp2} we obtain
\begin{align*}
\begin{aligned}
&\E\left[\E\Big[\big|L_{t,\tilde\theta_h}-L_{t,\tau^{-h}_\varphi}\big|\1_{\{J_{\tilde\theta_h}\leq h\}}\Big|\cF^{\rho,J}_{\tau^{-h}_\varphi}\Big]\1_{\{\tau^{-h}_\varphi<T_1-t\}\cap\{J_{\tau^{-h}_\varphi}\le h\}}\right]\\
&\leq C_{\cR} \E\left[L_{t,\tau^{-h}_\varphi}\Big(h\!+\!\E\left[\big(\tilde\theta_h\!-\!\tilde\tau^{-h}_\varphi \big)\big| \cF^{\rho,J}_{\tau^{-h}_\varphi}\right]^{\frac12}\!+\!\E\left[\big(\tilde\theta_h\!-\!\tilde\tau^{-h}_\varphi \big)^2\big| \cF^{\rho,J}_{\tau^{-h}_\varphi}\right]^{\frac12}\Big)\1_{\{\tau^{-h}_\varphi<T_1-t\}\cap\{J_{\tau^{-h}_\varphi}\leq h\}}\right]\\
&\leq C_{\cR} \E\left[L_{t,\tau^{-h}_\varphi}\Big(h\!+\!\E\left[\sigma^\beta_{2h}\wedge T_1\big| \cF^{\rho,J}_{\tau^{-h}_\varphi}\right]^{\frac12}\!+\!\E\left[\big(\sigma^\beta_{2h}\wedge T_1\big)^2\big| \cF^{\rho,J}_{\tau^{-h}_\varphi}\right]^{\frac12}\Big)\1_{\{\tau^{-h}_\varphi<T_1-t\}\cap\{J_{\tau^{-h}_\varphi}\leq h\}}\right]\\
&=C_{\cR}\Big(h\!+\!\E\left[\sigma^\beta_{2h}\wedge T_1\right]^{\frac12}\!+\!\E\left[\big(\sigma^\beta_{2h}\wedge T_1\big)^2\right]^{\frac12}\Big) \E\Big[L_{t,\tau^{-h}_\varphi}\1_{\{\tau^{-h}_\varphi<T_1-t\}\cap\{J_{\tau^{-h}_\varphi}\leq h\}}\Big],
\end{aligned}
\end{align*}
where the equality holds by independence of $\sigma^\beta_{2h}$ from $\cF^{\rho,J}_{\tau^{-h}_\varphi}$. Using now \eqref{eq:unifJ}, \eqref{eq:boundrho} and Lemma \ref{lem:expbd} we also have
\begin{align*}
\E\Big[L_{t,\tau^{-h}_\varphi}\1_{\{\tau^{-h}_\varphi<T_1-t\}\cap\{J_{\tau^{-h}_\varphi}\leq h\}}\Big]&=\E\Big[L_{t,\tau^{-h}_\varphi}\1_{\{\tau^{-h}_\varphi<T_1-t\}}\P\big(J_{\tau^{-h}_\varphi}\leq h\big|\cF^\rho_{\tau^{-h}_\varphi}\big)\Big]\\
&\le h\E\big[L_{t,\tau^{-h}_\varphi}(\rho_{\tau^{-h}_\varphi})^{-1}\1_{\{\tau^{-h}_\varphi<T_1-t\}}\big]\le (C_\cR/\bar c)h,
\end{align*}
so that we conclude
\begin{align}\label{eq:line1}
\begin{aligned}
\qquad\E\left[\left|L_{t,\theta_h}\!-\!L_{t,\tau^{-h}_\varphi}\right|\1_{\{\theta_h<T_1-t\}\cap\{J_{\theta_h}\le h\}}\right]\le (C^2_{\cR}/\bar c)h \Big(h\!+\!\E\left[\sigma^\beta_{2h}\wedge T_1\right]^{\frac12}\!+\!\E\left[\big(\sigma^\beta_{2h}\wedge T_1\big)^2\right]^{\frac12}\Big).
\end{aligned}
\end{align}
Using Lemma \ref{lem:delta}, we have by similar arguments
\begin{align}\label{eq:line2}
\begin{aligned}
&\E\left[L_{t,\tau^{-h}_\varphi}\left|\delta^{(2)}_{t,\theta_h}(\rho)\!-\!\delta^{(2)}_{t,\tau^{-h}_\varphi}(\rho) \right| \1_{\{\theta_h<T_1-t\}\cap\{J_{\theta_h}\le h\}}\right]\\
&\leq \E\left[L_{t,\tau^{-h}_\varphi}\varepsilon_1\big(\tilde\theta_h-\tau^{-h}_\varphi\big)\1_{\{\tau^{-h}_\varphi<T_1-t\}}\1_{\{J_{\tau^{-h}_\varphi}<T_1-t\}}\right]\\
&\leq \E\left[L_{t,\tau^{-h}_\varphi}\varepsilon_1\big(\sigma^{\beta}_{2h}\wedge (T_1-t)\big) \1_{\{\tau^{-h}_\varphi<T_1-t\}}\1_{\{J_{\tau^{-h}_\varphi}<T_1-t\}}\right]\leq (C_{\cR}/\bar c) h\E\left[\varepsilon_1(\sigma^{\beta}_{2h}\wedge T_1)\right],
\end{aligned}
\end{align}
for the second term on the right-hand side of \eqref{eq:UhVhbar}.

Finally, we look at the third term on the right-hand side of \eqref{eq:UhVhbar}.
Since $\tau^{-h}_\varphi\leq \theta_h$, then $B^h_2\subset\{\tau^{-h}_\varphi<T_1-t\}\cap\{J_{\tau^{-h}_\varphi}\leq h \}$ and
\begin{align*}
\1_{\{\tau^{-h}_\varphi<T_1-t\}\cap\{J_{\tau^{-h}_\varphi}\leq h \}} &=  \1_{B^h_2}+\1_{\{\tau^{-h}_\varphi<T_1-t\}\cap\{J_{\tau^{-h}_\varphi}\leq h \}\cap (B^h_2)^c}\\
&=\1_{B^h_2}+\1_{\{\tau^{-h}_\varphi<T_1-t\}\cap\{J_{\tau^{-h}_\varphi}\leq h \}}\left(\1_{\{\theta_h\geq T_1-t\}}+\1_{\{\theta_h< T_1-t\}\cap\{J_{\theta_h}>h\}}\right).
\end{align*}
It follows that the last term in \eqref{eq:UhVhbar} reads
\begin{align*}
\E\left[L_{t,\tau^{-h}_\varphi}\left|\1_{B^h_2}-\1_{\{\tau^{-h}_\varphi<T_1-t\}\cap\{J_{\tau^{-h}_\varphi}\le h\}} \right|\right]&=\bar I_1(t,h)+\bar I_2(t,h),
\end{align*}
with
\[
\bar I_1(t,h)=\E\left[L_{t,\tau^{-h}_\varphi}\1_{\{\tau^{-h}_\varphi<T_1-t\}\cap\{J_{\tau^{-h}_\varphi}\leq h \}}\1_{\{\theta_h\geq T_1-t\}}\right]
\]
and
\[
\bar I_2(t,h)=\E\left[L_{t,\tau^{-h}_\varphi}\1_{\{\tau^{-h}_\varphi<T_1-t\}\cap\{J_{\tau^{-h}_\varphi}\leq h \}}\1_{\{\theta_h< T_1-t\}\cap\{J_{\theta_h}>h\}} \right].
\]
We first study $\bar I_1(t,h)$. For fixed $\delta\in (0, (T_1-T_2)/2)$ we have
\begin{align*}
\bar I_1(t,h)&=\E\left[L_{t,\tau^{-h}_\varphi}\1_{\{\tau^{-h}_\varphi<T_1-t-\delta\}\cap\{J_{\tau^{-h}_\varphi}\leq h \}}
\1_{\{\theta_h\geq  T_1-t\}}\right]\\
&\quad +\E\left[L_{t,\tau^{-h}_\varphi}\1_{\{T_1-t-\delta\leq\tau^{-h}_\varphi<T_1-t\}\cap\{J_{\tau^{-h}_\varphi}\leq h \}}
\1_{\{\theta_h\geq T_1-t\}}\right]
\end{align*}
On the set $\{\tau^{-h}_\varphi<T_1-t-\delta\}\cap\{\theta_h\geq T_1-t\}$, we have $\theta_h-\tau^{-h}_\varphi\geq \delta$.
By Lemma \ref{lem:3dBp2}, $\theta_h=\hat\tau^{-h}_{\varphi}\leq \tau^{-h}_\varphi+\sigma^\beta_{2h}$
on the event $\{J_{\tau^{-h}_\varphi}\leq h\}\cap\{\tau^{-h}_\varphi<T_1-t\}$. Then, by independence of $\beta$ and $\cF^{\rho,J}_{\tau^{-h}_\varphi}$,
\begin{align}\label{eq:I1-1}
\begin{aligned}
&\E\left[L_{t,\tau^{-h}_\varphi}\1_{\{\tau^{-h}_\varphi<T_1-t-\delta\}\cap\{J_{\tau^{-h}_\varphi}\leq h \}}\1_{\{\theta_h\geq T_1-t\}}\right]\\
&\leq\E\left[L_{t,\tau^{-h}_\varphi}\1_{\{\tau^{-h}_\varphi<T_1-t-\delta\}\cap\{J_{\tau^{-h}_\varphi}\leq h \}}\1_{\{\sigma^\beta_{2h}\geq \delta\}}   \right]\\
&=\E\left[L_{t,\tau^{-h}_\varphi}\1_{\{\tau^{-h}_\varphi<T_1-t-\delta\}}\P\left(J_{\tau^{-h}_\varphi}\leq h \big|\cF^{\rho}_{\tau^{-h}_\varphi}\right)\right]\P\left(\sigma^\beta_{2h}\geq \delta\right)\\
&\leq \frac{h}{\bar c}\E\left[L_{t,\tau^{-h}_\varphi}\1_{\{\tau^{-h}_\varphi<T_1-t-\delta\}}\right]\P\left(\sigma^\beta_{2h}\geq \delta\right)\leq(C_{\cR}/\bar c) h\P\left(\sigma^\beta_{2h}\geq \delta\right),
\end{aligned}
\end{align}
where we used \eqref{eq:boundrho} in the second inequality and Lemma \ref{lem:expbd} for the final one.
Recalling that $\delta\in(0,[T_1-T_2]/2)$, for the second term in the expression of $\bar I_1(t,h)$ we have, for $t\in[0,T_2]$,
\begin{align}\label{eq:I1-2}
\begin{aligned}
&\E\left[L_{t,\tau^{-h}_\varphi}\1_{\{T_1-t-\delta\leq\tau^{-h}_\varphi<T_1-t\}\cap\{J_{\tau^{-h}_\varphi}\leq h \}}
\1_{\{\theta_h\geq T_1-t\}}\right]\\
&\leq\E\left[L_{t,\tau^{-h}_\varphi}\1_{\{T_1-t-\delta\leq\tau^{-h}_\varphi<T_1-t\}}\P\big(J_{\tau^{-h}_\varphi}\leq h \big|\cF^\rho_{\tau^{-h}_\varphi}\big)\right]\leq\frac{h}{\bar c}\E\left[L_{t,\tau^{-h}_\varphi}\1_{\{T_1-t-\delta\leq\tau^{-h}_\varphi<T_1-t\}}\right]\\
&\leq\frac{h}{\bar c}C_{p,\cR}\left[\P\left(T_1-t-\delta\leq\tau^{-h}_\varphi<T_1-t \right)\right]^{1/p}\\
&\leq h\frac{C_{p,\cR,\|\dot \varphi\|}}{\bar c}\left(\frac{\sqrt{\delta}}{\sqrt{T_1-t-\delta}}\right)^{1/p}\leq h\frac{C_{p,\cR,\|\dot \varphi\|}}{\bar c}\left(\frac{\sqrt{2\delta}}{\sqrt{T_1-T_2}}\right)^{1/p},
\end{aligned}
\end{align}
where the penultimate inequality is by Lemma \ref{lem:pbdtauh}.
Therefore, combining \eqref{eq:I1-1} and \eqref{eq:I1-2} we have for some $C>0$ independent of $h$ and $t\in[0,T_2]$,
\begin{align}\label{eq:I1}
\big|\bar I_1(t,h)\big|\le C h \Big(\P\big(\sigma^\beta_{2h}\ge \delta\big)+\delta^{\frac{1}{2p}}\Big).
\end{align}

We now examine $\bar I_2(t,h)$. Recall that the process $X$, defined by  $X_s=2J_s-\rho_s$ is a $(\cF^{\rho,J}_s)$-Brownian motion 
and $J_s=\sup_{u\in[0,s]}X_{u}$. 
Therefore, recalling $\theta_h\ge \tau^{-h}_\varphi$ we have 
\[
J_{\theta_h}=\max\Big(J_{\tau^{-h}_\varphi},\sup_{\tau^{-h}_\varphi\le s\le\theta_h}X_s\Big).
\] 
On the event $\{J_{\tau^{-h}_\varphi}\leq h<J_{\theta_h}\}$, 
\[
\sup_{\tau^{-h}_\varphi\le s\le \theta_h}X_s>h\iff\sup_{\tau^{-h}_\varphi\le s \le \theta_h}\left(X_s-X_{\tau^{-h}_\varphi}\right)>h-X_{\tau^{-h}_\varphi}=h+\rho_{\tau^{-h}_\varphi}-2J_{\tau^{-h}_\varphi}.
\] 
On the event $\{\tau^{-h}_\varphi<T_1-t\}\cap\{J_{\tau^{-h}_\varphi}\leq h\}$ using \eqref{eq:boundrho} in the above yields 
\[
\sup_{\tau^{-h}_\varphi\le s\le \theta_h}\left(X_s-X_{\tau^{-h}_\varphi}\right)>\bar c-h\ge \bar c/2,
\]
where the final inequality holds for $h\leq \bar c/2$.
The process $(\psi_s)_{s\ge 0}=(X_{\tau^{-h}_\varphi+s}-X_{\tau^{-h}_\varphi})_{s\ge 0}$ is a Brownian motion independent of $\cF^X_{\tau^{-h}_\varphi}$. Then, setting 
$S_\lambda=\sup_{0\leq s\leq \lambda} \psi_s$, we have for all $\delta>0$
\begin{align*}
\bar I_2(t,h)&\leq \E\left[L_{t,\tau^{-h}_\varphi}\1_{\{\tau^{-h}_\varphi<T_1-t\}\cap\{J_{\tau^{-h}_\varphi}\leq h \}}
\1_{\{ S_{ \theta_h-\tau^{-h}_\varphi}\geq \bar c/2\}}\right]\\
&\leq \E\left[L_{t,\tau^{-h}_\varphi}\1_{\{\tau^{-h}_\varphi<T_1-t\}\cap\{J_{\tau^{-h}_\varphi}\leq h \}}
\left(\1_{\{ S_{ \delta}\geq \bar c/2\}} +\1_{\{\theta_h-\tau^{-h}_\varphi>\delta\}} \right)\right]\\
&\leq\E\left[L_{t,\tau^{-h}_\varphi}\1_{\{\tau^{-h}_\varphi<T_1-t\}\cap\{J_{\tau^{-h}_\varphi}\leq h \}}
\left(\1_{\{ S_{ \delta}\geq \bar c/2\}}+\1_{\{\sigma^\beta_{2h}>\delta\}} \right)\right],
\end{align*}
where we used also $\theta_h\le \tau^{-h}_\varphi+\sigma^\beta_{2 h}$ (cf.\ Lemma \ref{lem:3dBp2}).
Using the independence of the pair $(S,\beta)$ from $\cF^{\rho,J}_{\tau^{-h}_\varphi}$ and \eqref{eq:unifJ} yields
\begin{align}\label{eq:I2}
\begin{aligned}
\bar I_2(t,h)&\leq \E\left[L_{t,\tau^{-h}_\varphi}\1_{\{\tau^{-h}_\varphi<T_1-t\}}\P\left(J_{\tau^{-h}_\varphi}\leq h\big|\cF^{\rho}_{\tau^{-h}_\varphi}\right)\right] \left(\P\left(S_{\delta}\geq \bar c/2 \right)          +
\P\left(\sigma^\beta_{2h}>\delta\right)\right)\\
&\leq\frac{h}{\bar c}\E\left[L_{t,\tau^{-h}_\varphi}\1_{\{\tau^{-h}_\varphi<T_1-t\}}\right]
\left(\P\left(S_{ \delta}\geq \bar c/2 \right)          +
\P\left(\sigma^\beta_{2h}>\delta\right)\right)\\
&\leq (C_{\cR}/\bar c) h\left(\P\big(S_{ \delta}\geq \bar c/2 \big)          +
\P\big(\sigma^\beta_{2h}>\delta\big)\right),
\end{aligned}
\end{align}
for some  constant $C_{\cR}>0$ coming from Lemma \ref{lem:expbd}. 

Therefore, combining \eqref{eq:I1} and \eqref{eq:I2} we obtain
\begin{align}\label{eq:line3}
\begin{aligned}
&\E\left[L_{t,\tau^{-h}_\varphi}\left|\1_{B^h_2}-\1_{\{\tau^{-h}_\varphi<T_1-t\}\cap\{J_{\tau^{-h}_\varphi}\le h\}} \right|\right]\le C h \Big(\P\big(\sigma^\beta_{2h}\ge \delta\big)+\P\big(S_\delta\ge \bar c/2\big)+\delta^{\frac{1}{2p}}\Big),
\end{aligned}
\end{align}
for a constant $C>0$ independent of $h$ and $t\in[0,T_2]$.
Now, combining \eqref{eq:line1}, \eqref{eq:line2} and \eqref{eq:line3}, we obtain 
\begin{align*}
\lim_{h\to 0}\sup_{0\le t\le T_2}h^{-1}\big|U^h_2(t)-\bar V^h_2(t)\big|\le C\Big(\P(S_\delta\ge \bar c/2)+\delta^{\frac{1}{2p}}\Big),
\end{align*}
by using that $\sigma^\beta_{2h}\to 0$ a.s.\ (Remark \ref{rem:sigma}). Since $\delta>0$ was arbitrary, letting $\delta\downarrow 0$ we obtain \eqref{eq:limUhbarVh}.

Finally, combining \eqref{eq:limUhVh} and \eqref{eq:limUhbarVh} we conclude.
\end{proof}

Equipped with the results from Lemmata \ref{lem:barVh} and \ref{lem:V2h}, we now proceed to prove Theorem \ref{thm:expansion}.
\begin{proof}[\bf Proof of Theorem \ref{thm:expansion}]
We adopt the same notations as in the proofs of the previous lemmas. In the estimates below $C_\cR>0$ is a constant that may vary from line to line but it is independent of $h$ and $t$.
Setting
\[
\bar V_h(t)\coloneqq \bar V^h_1(t)+\bar V^h_2(t)+\int_0^{T_1-t}\bar V^h_s(t)\ud s,
\]
thanks to Lemmata \ref{lem:barVh} and  \ref{lem:V2h}  it suffices to show that 
\begin{align}\label{eq:limV}
\begin{aligned}
\lim_{h\downarrow 0}\sup_{0\le t\le T_2}\Big|h^{-1}\bar V_h(t)-V_1(t)-V_2(t)-\int_0^{T_1-t}V_s(t)\ud s\Big|=0.
\end{aligned}
\end{align}

{\bf Step 1}. We first study $h^{-1}\bar V^h_1(t)-V_1(t)$. 
In the notation of Lemma \ref{lem:3dBp2}, we have $\tau^{-h}_\varphi\le \theta_h=\hat{\tau}^{-h}_\varphi\le \tau^{-h}_\varphi+\sigma^\beta_{2h}$, with
$\varphi(s)=\varphi_t(s)=c((t+s)\wedge T_1)-y_2$, and
\begin{align*}
\{J_{T_1-t}\le h\}\cap\{\theta_h\ge T_1-t\}&= \{J_{T_1-t}\le h\}\cap\{\rho_s-2J_s+\varphi(s)< -  h,\forall s\in [0,T_1-t)\}\\
           &\subset  \{J_{T_1-t}\le h\}\cap\{\tau^{+h}_\varphi \ge T_1-t\}.
\end{align*}
Therefore
\[
\{J_{T_1-t}\le h\}\cap\{\tau^{-h}_\varphi\ge T_1-t\} \subset \{J_{T_1-t}\le h\}\cap\{\theta_h\ge T_1-t\}\subset 
                    \{J_{T_1-t}\le h\}\cap\{\tau^{+h}_\varphi\ge T_1-t\}. 
\]
Similarly, using $\{\theta = T_1-t\}=\{\rho_s+\varphi(s)<0,\forall s\in [0,T_1-t)\}$, we also have
\begin{align*}
\{\tau^{-h}_\varphi\ge T_1-t\}\subset 
\{\theta = T_1-t\}\subset\{\tau^{+h}_\varphi\ge T_1-t\}.
\end{align*}
Combining the above set inclusions we deduce
\[
\left|\1_{\{J_{T_1-t}\le h\}\cap\{\theta_h\ge T_1-t\}}-\1_{\{J_{T_1-t}\le h\}\cap\{\theta = T_1-t\}} \right|\leq \1_{\{J_{T_1-t}\le h\}}
    \1_{\{\tau^{-h}_\varphi<T_1-t\leq \tau^{+h}_\varphi\}},
\]
and therefore
\begin{align}\label{eq:in1}
\begin{aligned}
&\left|\bar V^h_1(t)-\E\left[L_{t,T_1-t}\delta^{(1)}_{t,T_1-t}(\rho)\1_{\{J_{T_1-t}<h\}\cap\{\theta= T_1-t\}}\right]\right|\\
&\le
\E\left[\left|L_{t,T_1-t}\delta^{(1)}_{t,T_1-t}(\rho)\right|\P\left(J_{T_1-t}\le h\big|\cF^\rho_{T_1-t}\right)\1_{\{\tau^{-h}_\varphi<T_1-t\leq \tau^{+h}_\varphi\}}\right]\\
&\leq h M_\delta \E\left[\frac{1}{\rho_{T_1-t}}L_{t,T_1-t}\1_{\{\tau^{-h}_\varphi<T_1-t\leq \tau^{+h}_\varphi\}}\right],
\end{aligned}
\end{align}
where we used \eqref{eq:unifJ} and \eqref{eq:deltabound}. 
By analogous arguments and using the equality $\eta\wedge 1=\eta-(\eta-1)^+$ we also obtain
\begin{align}\label{eq:in2}
\begin{aligned}
&\E\left[L_{t,T_1-t}\delta^{(1)}_{t,T_1-t}(\rho)\1_{\{J_{T_1-t}\le h\}\cap\{\theta= T_1-t\}}\right]\\
&=\E\left[L_{t,T_1-t}\delta^{(1)}_{t,T_1-t}(\rho)\P\left(J_{T_1-t}\le h\big|\cF^\rho_{T_1-t}\right)\1_{\{\theta= T_1-t\}}\right]\\
&=\E\left[L_{t,T_1-t}\delta^{(1)}_{t,T_1-t}(\rho)\left(\frac{h}{\rho_{T_1-t}}\wedge 1\right)\1_{\{\theta= T_1-t\}}\right]\\
&=h \E\left[L_{t,T_1-t}\delta^{(1)}_{t,T_1-t}(\rho)\frac{1}{\rho_{T_1-t}}\1_{\{\theta= T_1-t\}}\right]-
\E\left[L_{t,T_1-t}\delta^{(1)}_{t,T_1-t}(\rho)\left(\frac{h}{\rho_{T_1-t}}-1\right)^+\1_{\{\theta= T_1-t\}}\right]\\
&=hV_1(t)-\E\left[L_{t,T_1-t}\delta^{(1)}_{t,T_1-t}(\rho)\left(\frac{h}{\rho_{T_1-t}}-1\right)^+\1_{\{\theta= T_1-t\}}\right].
\end{aligned}
\end{align}

From \eqref{eq:in1} and \eqref{eq:in2} it follows
\begin{align}\label{eq:s2}
h^{-1}\left|\bar V^h_1(t)\!-\! hV_1(t)\right|&\leq M_\delta\Big(\E\left[\frac{L_{T_1-t}}{\rho_{T_1-t}}\1_{\{\tau^{-h}_\varphi<T_1-t\leq \tau^{+h}_\varphi\}}\!+\!\frac{L_{T_1-t}}{\rho_{T_1-t}}\1_{\{\rho_{T_1-t}\le h\}}\1_{\{\theta= T_1-t\}}\right]\Big).
\end{align}
For the second term in the expression above, by H\"older's inequality with $p\in(1,3)$ and $p^{-1}+q^{-1}+1/2=1$, and scaling of the Bessel process, we have
\begin{align}\label{eq:s}
\begin{aligned}
\E\left[\frac{L_{T_1-t}}{\rho_{T_1-t}}\1_{\{\rho_{T_1-t}\le h\}}\1_{\{\theta= T_1-t\}}\right]&\leq 
\frac{1}{\sqrt{T_1-t}}\left\|\frac{1}{\rho_{1}}\right\|_p\| L_{T_1-t}\|_q\P\big(\rho_{T_1-t}\leq h\big)^{\frac12}\\
&\leq
C_\cR\P\left(\rho_{1}\leq \frac{h}{\sqrt{T_1-T_2}}\right)^{\frac12},
\end{aligned}
\end{align}
where Lemma \ref{lem:expbd} yields the second inequality.
For the first term in \eqref{eq:s2} we have
\begin{align*}
&\E\left[\frac{L_{T_1-t}}{\rho_{T_1-t}}\1_{\{\tau^{-h}_\varphi<T_1-t\leq \tau^{+h}_\varphi\}}\right]\\
&\leq \E\left[\frac{L_{T_1-t}}{\rho_{T_1-t}}\1_{\{\tau^{-h}_\varphi<T_1-t-\delta\}\cap\{T_1-t\leq \tau^{+h}_\varphi\}}\right]
+\E\left[\frac{L_{T_1-t}}{\rho_{T_1-t}}\1_{\{T_1-t-\delta\leq \tau^{-h}_\varphi<T_1-t\}}\right]\\
&\leq\E\left[\frac{L_{T_1-t}}{\rho_{T_1-t}}\1_{\{\sigma^{\beta}_{2h}\geq \delta\}}\right]
+\E\left[\frac{L_{T_1-t}}{\rho_{T_1-t}}\1_{\{T_1-t-\delta\leq \tau^{-h}_\varphi<T_1-t\}}\right],
\end{align*}
for any $\delta\in (0,(T_1-T_2)/2]$, where we have used Lemma \ref{lem:3dBp1} for the final inequality. Continuing from the last expression as in \eqref{eq:s}, using H\"older's inequality with $p^{-1}+q^{-1}+1/2=1$ we obtain
\begin{align*}
\E\left[\frac{L_{T_1-t}}{\rho_{T_1-t}}\1_{\{\tau^{-h}_\varphi<T_1-t\leq \tau^{+h}_\varphi\}}\right]
&\leq C_{\cR}\left[\P\left(\sigma^{\beta}_{2h}\geq \delta\right)^{\frac12}+\P\left(T_1-t-\delta\leq \tau^{-h}_\varphi<T_1-t\right)^{\frac12}\right]\\
&\leq C_{\cR}\left[\P\left(\sigma^{\beta}_{2h}\geq \delta\right)^{\frac12}+\left(\frac{\sqrt{2\delta}}{\sqrt{T_1-T_2}}\right)^{\frac12}\right],
\end{align*}
where, for the last inequality, we have used Lemma~\ref{lem:pbdtauh}. Combining the estimates above with \eqref{eq:s2} and letting $h\to 0$ we arrive at
\[
\limsup_{h\downarrow 0}\sup_{0\le t\le T_2}h^{-1}\left|\bar V^h_1(t)- hV_1(t)\right|\le C_\cR\delta^{1/4}.
\]
Since $\delta>0$ was arbitrary, $h^{-1}\left|\bar V^h_1(t)- hV_1(t)\right|\to 0$ as $h\to 0$, uniformly for $t\in[0,T_2]$.
\medskip

{\bf Step 2}. Next we consider the difference $h^{-1}\bar V^h_2(t)-V_2(t)$. By tower property and \eqref{eq:unifJ} we have 
\begin{align*}
\bar V^h_2(t)&=\E\left[L_{t,\tau^{-h}_\varphi}\delta^{(2)}_{t,\tau^{-h}_\varphi}(\rho)\1_{\{\tau^{-h}_\varphi<T_1-t\}}\P\left(J_{\tau^{-h}_\varphi}\le h\big|\cF^\rho_{\tau^{-h}_\varphi}\right)\right] \\
&=\E\left[L_{t,\tau^{-h}_\varphi}\left(\frac{h}{\rho_{\tau^{-h}_\varphi}}\wedge 1\right)\delta^{(2)}_{t,\tau^{-h}_\varphi}(\rho)\1_{\{\tau^{-h}_\varphi<T_1-t\}}\right] \\
&=h \E\left[\frac{L_{t,\tau^{-h}_\varphi}}{\rho_{\tau^{-h}_\varphi}}\delta^{(2)}_{t,\tau^{-h}_\varphi}(\rho)\1_{\{\tau^{-h}_\varphi<T_1-t\}}\right]-\E\left[L_{t,\tau^{-h}_\varphi}\left(\frac{h}{\rho_{\tau^{-h}_\varphi}} -1\right)^+\delta^{(2)}_{t,\tau^{-h}_\varphi}(\rho)\1_{\{\tau^{-h}_\varphi<T_1-t\}}\right],
\end{align*}
where we used $\eta\wedge 1=\eta-(\eta-1)^+$.
Since $\rho_{\tau^{-h}_\varphi}\geq \bar c>0$ by \eqref{eq:boundrho}, the final term in the expression above vanishes for $h\le \bar c$ and  
\begin{align*}
\big|h^{-1}\bar V^h_2(t)-V_2(t)\big|&=\left|\E\left[\frac{L_{t,\tau^{-h}_\varphi}}{\rho_{\tau^{-h}_\varphi}}\delta^{(2)}_{t,\tau^{-h}_\varphi}(\rho)\1_{\{\tau^{-h}_\varphi<T_1-t\}}\right]-\E\left[\frac{L_{t,\theta}}{\rho_\theta}\delta^{(2)}_{t,\theta}(\rho)
                        \1_{\{\theta<T_1-t\}}\right]\right|\\
&\le\E\left[\frac{|L_{t,\theta}-L_{t,\tau^{-h}_\varphi}|}{\rho_\theta}
                  |\delta^{(2)}_{t,\theta}(\rho)|
                        \1_{\{\theta<T_1-t\}}\right]   \\
&\quad+\E\left[L_{t,\tau^{-h}_\varphi}\left|\frac{\delta^{(2)}_{t,\theta}(\rho)}{\rho_\theta}\1_{\{\theta<T_1-t\}}-\frac{ \delta^{(2)}_{t,\tau^{-h}_\varphi}(\rho)}{\rho_{\tau^{-h}_\varphi}}\1_{\{\tau^{-h}_\varphi<T_1-t\}}\right|\right].                   
\end{align*}

On $\{\theta<T_1-t\}$ we have $\theta=\tau^0_\varphi\geq \tau^{-h}_\varphi$, so that
$\1_{\{\theta<T_1-t\}}=\1_{\{\tau^{-h}_\varphi<T_1-t\}}-\1_{\{\tau^{-h}_\varphi<T_1-t\leq \theta \}}$.
Moreover, on $\{\theta<T_1-t\}$ we have $\rho_\theta=y_2-c(t+\theta)\ge \bar c>0$ and, similarly, on $\{\tau^{-h}_\varphi<T_1-t\}$
we have $\rho_{\tau^{-h}_\varphi}=y_2-c(t+\tau^{-h}_\varphi)-h\ge\bar c/2>0$, for sufficiently small $h>0$.
Therefore
\begin{align*}
\left|V_2(t)-h^{-1}\bar V^h_2(t)\right|&\leq (M_\delta/\bar c) \E\left[\big|L_{t,\theta}-L_{t,\tau^{-h}_\varphi}\big|\1_{\{\theta<T_1-t\}}\right]\\
&\quad+\E\left[ L_{t,\tau^{-h}_\varphi}\1_{\{\tau^{-h}_\varphi<T_1-t\}}\left|\frac{\delta^{(2)}_{t,\theta}(\rho)}{y_2-c(t+\theta)}-\frac{ \delta^{(2)}_{t,\tau^{-h}_\varphi}(\rho)}{y_2-c(t+\tau^{-h}_\varphi)-h}\right|\right]\\
&\quad+ \E\left[ L_{t,\tau^{-h}_\varphi}\left|\frac{\delta^{(2)}_{t,\theta}(\rho)}{y_2-c(t+\theta)}\right|\1_{\{\tau^{-h}_\varphi<T_1-t\leq \theta\}}\right]\\
&\le(M_\delta/\bar c)\left(K_{2,1}(t,h)+K_{2,2}(t,h)\right)+K_{2,3}(t,h),
\end{align*}
with 
 \[
 K_{2,1}(t,h)= \E\left[|L_{t,\theta}-L_{t,\tau^{-h}_\varphi}|\1_{\{\theta<T_1-t\}}\right],
 \quad K_{2,2}(t,h)=  \E\left[ L_{t,\tau^{-h}_\varphi}
                      \1_{\{\tau^{-h}_\varphi<T_1-t\leq \theta\}}\right],
 \]
and
   \[
   K_{2,3}(t,h)=\E\left[ L_{t,\tau^{-h}_\varphi}\1_{\{\tau^{-h}_\varphi<T_1-t\}}\left|\frac{\delta^{(2)}_{t,\theta}(\rho)}{y_2-c(t+\theta)}
                      -\frac{ \delta^{(2)}_{t,\tau^{-h}_\varphi}(\rho)}{y_2-c(t+\tau^{-h}_\varphi)-h}\right|\right].
   \]                  
   
For the study of $K_{2,1}(t,h)$, 
we recall that $\tau^{-h}_\varphi<T_1-t$ on $\{\theta<T_1-t\}$. Then, on $\{\theta<T_1-t\}$,
\begin{align*}
\left|L_{t,\theta}-L_{t,\tau^{-h}_\varphi}\right|&\leq \left(L_{t,\theta}+L_{t,\tau^{-h}_\varphi}\right)
\left(\left|\int_{\tau^{-h}_\varphi}^\theta \gamma_{t,v}(\rho_v)\ud \rho_v\right|+\frac{1}{2}\int_{\tau^{-h}_\varphi}^\theta \left(\gamma_{t,v}(\rho_v)\right)^2\ud v\right)\\
&\leq \left(L_{t,\theta}+L_{t,\tau^{-h}_\varphi}\right)\left(\left|\int_{\tau^{-h}_\varphi}^\theta \gamma_{t,v}(\rho_v)\ud \rho_v\right|+\frac{C^2_{\cR}}{2}\left(\theta-\tau^{-h}_\varphi\right)\right).
\end{align*}
It follows by Cauchy-Schwarz inequality
\begin{align}\label{eq:K21}
\begin{aligned}
K_{2,1}(t,h)\le C_\cR &\left\|\1_{\{\theta<T_1-t\}}\big(L_{t,\theta}\!+\!L_{t,\tau^{-h}_\varphi}\big)\right\|_2\cdot\\
&\cdot\bigg(\Big\|\1_{\{\theta<T_1-t\}}\int_{\tau^{-h}_\varphi}^\theta \gamma_{t,v}(\rho_v)\ud \rho_v\Big\|_2+\!\big\|\1_{\{\theta<T_1-t\}}\big(\theta\!-\!\tau^{-h}_\varphi\big)\big\|_2\bigg).
\end{aligned}
\end{align}

Recall that the Bessel process is a strong solution of the equation \eqref{eq:SDErho}, so that
\begin{align*}
\left(\int_{\tau^{-h}_\varphi}^\theta \gamma_{t,v}(\rho_v)\ud \rho_v\right)^2&\leq
2\left(\int_{\tau^{-h}_\varphi}^\theta \gamma_{t,v}(\rho_v)\ud \beta^0_v\right)^2+
2C^2_{\cR}\left(\int_{\tau^{-h}_\varphi}^\theta \frac{\ud s}{ \rho_s}\right)^2\\
&\le 2\left(\int_{\tau^{-h}_\varphi}^\theta \gamma_{t,v}(\rho_v)\ud \beta^0_v\right)^2+
4C^2_{\cR}\left( \left(\rho_{\theta}-\rho_{\tau^{-h}_\varphi}\right)^2+\left(\beta^0_{\theta}-\beta^0_{\tau^{-h}_\varphi}\right)^2\right).
\end{align*}
Setting $\tau^{-h}_\varphi\wedge(T_1-t)=\tau_1$ for the ease of notation and recalling also that $\rho_\tau=|\widehat B_\tau|$ where $\widehat B$ is a 3-dimensional Brownian motion, we have the bound
\begin{align*}
\E\Big[\1_{\{\theta<T_1-t\}}\Big|\int_{\tau^{-h}_\varphi}^\theta \gamma_{t,v}(\rho_v)\ud \rho_v\Big|^2\Big]&\leq
2\E\left[\left(\int_{\tau_1}^\theta \gamma_{t,v}(\rho_v)\ud \beta^0_v\right)^2\right] \\
&\quad+
4C^2_{\cR}\left(\E \left[\left(\rho_{\theta}-\rho_{\tau_1}\right)^2\right]+
\E \left[\left(\beta^0_{\theta}-\beta^0_{\tau_1}\right)^2 \right]\right)\leq 
   C_{\cR}\E\left[\theta-\tau_1 \right],
\end{align*}
where  the last inequality follows from the boundedness of $\gamma$ and $(\rho_{\theta}-\rho_{\tau_1})^2\le |\widehat B_\theta-\widehat B_{\tau_1}|^2$.  Substituting these bounds into \eqref{eq:K21} we obtain
\begin{align}\label{eq:K21.2}
\begin{aligned}
K_{2,1}(t,h)&\leq
  C_{\cR} \left\|\1_{\{\theta<T_1-t\}}\big( L_{t,\theta}+L_{t,\tau_1}\big)\right\|_2\big(\sqrt{\|(\theta-\tau^{-h}_\varphi)^+\|_1}+\|(\theta-\tau^{-h}_\varphi)^+\|_2\big)\\
  &\le C_\cR\big(\sqrt{\|\sigma^\beta_{2h}\wedge T_1\|_1}+\big\|\sigma^\beta_{2h}\wedge T_1\big\|_2\big),
\end{aligned}
\end{align}
where for the second inequality we used Lemma \ref{lem:expbd} and Lemma \ref{lem:3dBp1}, upon noticing that on $\{\theta<T_1-t\}$ we have 
$\theta-\tau_1\leq (\tau^{h}_\varphi-\tau^{-h}_\varphi)\wedge (T_1-t)\leq \sigma^\beta_{2h}\wedge T_1$.

In order to estimate $K_{2,2}(t,h)$, we use Cauchy-Schwarz and Lemma \ref{lem:expbd} to derive
\[
K_{2,2}(t,h)\leq \left\| L_{t,\tau^{-h}_\varphi\wedge (T_1-t)}\right\|_2\sqrt{\P(\tau^{-h}_\varphi<T_1-t\leq \theta)}
   \leq C_{\cR}\sqrt{\P\left(\tau^{-h}_\varphi<T_1-t\leq \theta\right)}.
\]
For any $\delta\in (0,(T_1-T_2)/2]$, we have
\begin{align*}
\begin{aligned}
\P\left(\tau^{-h}_\varphi<T_1-t\leq \theta\right)&\leq \P\left(\tau^{-h}_\varphi<T_1-t-\delta,\ \theta\ge T_1-t\right)+\P\left(T_1-t-\delta\leq \tau^{-h}_\varphi<T_1-t\right)\\
&\leq\P\left(\theta-\tau^{-h}_\varphi>\delta\right)+\P\left(T_1-t-\delta\leq \tau^{-h}_\varphi<T_1-t\right)\\
&\leq\P\left(\sigma^\beta_{2h}>\delta\right)+C\frac{\sqrt{\delta}}{\sqrt{T_1-t-\delta}}\leq \P\left(\sigma^\beta_{2h}>\delta\right)+C\frac{\sqrt{2\delta}}{\sqrt{T_1-T_2}},
\end{aligned}
\end{align*}
thanks to Lemma \ref{lem:3dBp1} and Lemma \ref{lem:pbdtauh}. Thus
\begin{align}\label{eq:K22.2}
K_{2,2}(t,h)\le C_\cR\sqrt{\P\left(\sigma^\beta_{2h}\ge\delta\right)+\sqrt{2\delta}}.
\end{align}

Concerning $K_{2,3}(t,h)$, we have 
\begin{align*}
K_{2,3}(t,h)&\leq\E\left[\1_{\{\tau^{-h}_\varphi<T_1-t\}}L_{t,\tau^{-h}_\varphi}\frac{\left|\delta^{(2)}_{t,\theta}(\rho)- \delta^{(2)}_{t,\tau^{-h}_\varphi}(\rho)\right|}{y_2-c(t+\theta)}
                      \right]\\
&\quad+\E\left[\1_{\{\tau^{-h}_\varphi<T_1-t\}}L_{t,\tau^{-h}_\varphi}\left|\delta^{(2)}_{t,\tau^{-h}_\varphi}(\rho)\right|\left|\frac{1}{y_2-c(t+\theta)}- \frac{1}{y_2-c(t+\tau^{-h}_\varphi)-h}\right|\right]\\
&\leq\frac{1}{\bar c}\E\left[\1_{\{\tau^{-h}_\varphi<T_1-t\}}L_{t,\tau^{-h}_\varphi}\left|\delta^{(2)}_{t,\theta}(\rho)- \delta^{(2)}_{t,\tau^{-h}_\varphi}(\rho)\right|\right]\\
&\quad+(M_\delta/\bar c^{2})\E\left[\1_{\{\tau^{-h}_\varphi<T_1-t\}}L_{t,\tau^{-h}_\varphi} \left(\left|c(t+\theta)-c(t+\tau^{-h}_\varphi)\right|   +h\right)\right]\\
&\leq C_\cR\Big(\E\left[\1_{\{\tau^{-h}_\varphi<T_1-t\}}L_{t,\tau^{-h}_\varphi}\varepsilon_1(\sigma^\beta_{2h}\wedge T_1)\right]
                           +\E\left[\1_{\{\tau^{-h}_\varphi<T_1-t\}}L_{t,\tau^{-h}_\varphi}\left(c_\varphi (\sigma^\beta_{2h}\wedge T_1)+h\right)   
                         \right]\Big),
\end{align*}
where $c_\varphi$ is the Lipschitz constant of $s\mapsto c(s)$ on $[0,T_1]$ and we used 
$\theta-\tau^{-h}_\varphi\leq\sigma^\beta_{2h}\wedge T_1$ on $\{\tau^{-h}_\varphi<T_1-t\}$ along with Lemma \ref{lem:delta}.

Then, by Cauchy-Schwarz inequality we conclude
\begin{align}\label{eq:K23.2}
K_{2,3}(t,h)\le C_\cR\left\|L_{t,\tau^{-h}_\varphi\wedge(T_1-t)}\right\|_2\Big(\left\|\varepsilon_1(\sigma^\beta_{2h}\wedge T_1)\right\|_2+\left\|\sigma^\beta_{2h}\wedge T_1\right\|_2+h\Big).
\end{align}
Now, combining \eqref{eq:K21.2}, \eqref{eq:K22.2} and \eqref{eq:K23.2} and letting $h\to 0$ we find
\[
\limsup_{h\to 0}\sup_{0\le t\le T_2}\left|V_2(t)-h^{-1}\bar V^h_2(t)\right|\le C_\cR\delta^{1/4}.
\]
Since $\delta>0$ was arbitrary, $|V_2(t)-h^{-1}\bar V^h_2(t)|$ vanishes when $h\to 0$, uniformly in $t\in[0,T_2]$.
\medskip

{\bf Step 3}. Finally, we study the integral term in \eqref{eq:limV}. For a fixed $s\in(0,T_1-t]$, we have
\[
\bar V^h_s(t)=\E\left[\1_{\{J_s\leq h\}\cap\{\theta_h\geq s\}}L_{t,s}\delta^{(3)}_{t,s}(\rho)\right] \mbox{ and } 
V_s(t)=\E\left[\1_{\{\theta\geq s\}}\frac{L_{t,s}}{\rho_s}\delta^{(3)}_{t,s}(\rho)\right].
\]
Recall $B^h_s=\{J_s\leq h\}\cap\{\theta_h\geq s\}$. Since 
\[
\{J_s\leq h\}\cap\{\tau^{-h}_\varphi\geq s\}\subset B^h_s\subset \{J_s\leq h\}\cap\{\tau^{h}_\varphi\geq s\}
\quad\text{and}\quad
\{\tau^{-h}_\varphi\geq s\}\subset \{\theta\geq s\}\subset \{\tau^{h}_\varphi\geq s\},
\]
then
\[
\left|\1_{B^h_s}-\1_{\{J_s\leq h\}\cap\{\theta\geq s\}}\right|\leq \1_{\{J_s\leq h\}\cap\{\tau^{h}_\varphi\geq s\}}-
    \1_{\{J_s\leq h\}\cap\{\tau^{-h}_\varphi\geq s\}}= \1_{\{J_s\leq h\}\cap\{\tau^{h}_\varphi\geq s> \tau^{-h}_\varphi\}}
\]
and
\begin{align*}
&\left| \bar V^h_s(t)-\E\left[\1_{\{J_s\leq h\}\cap\{\theta\geq s\}}L_{t,s}\delta^{(3)}_{t,s}(\rho)\right]\right|\\
&\leq M_\delta\E\left[\1_{\{\tau^{h}_\varphi\geq s> \tau^{-h}_\varphi\}}
L_{t,s}\P\big(J_s\leq h\big|\cF^\rho_s\big)\right]\leq  M_\delta h\E\left[\frac{1}{\rho_s}L_{t,s}\1_{\{\tau^{-h}_\varphi< s\le\tau^{h}_\varphi\}}
     \right],
\end{align*}
where we used tower property, \eqref{eq:unifJ} and \eqref{eq:deltabound}. The use of tower property and \eqref{eq:unifJ} also yield
\begin{align*}
\E\left[\1_{\{J_s\leq h\}\cap\{\theta\geq s\}}L_{t,s}\delta^{(3)}_{t,s}(\rho)\right]&=
\E\left[\left(\frac{h}{\rho_s}\wedge 1\right)\1_{\{ \theta\geq s\}}L_{t,s}\delta^{(3)}_{t,s}(\rho)\right]\\
&=hV_s(t)-\E\left[\left(\frac{h}{\rho_s}- 1\right)^+\1_{\{ \theta\geq s\}}L_{t,s}\delta^{(3)}_{t,s}(\rho)\right].
\end{align*}
Using H\"older's inequality with $p^{-1}+q^{-1}+1/2=1$ and $p\in(1,3)$ we obtain
\begin{align}\label{eq:Vs}
\begin{aligned}
h^{-1}\big|\bar V^h_s(t)-hV_s(t)\big|&\leq  M_\delta\left(
      \E\left[\frac{L_{t,s}}{\rho_s}
                 \1_{\{\tau^{-h}_\varphi< s\le\tau^{h}_\varphi\}}
     \right]+\E\left[\frac{L_{t,s}}{\rho_s} \1_{\{\rho_s\le h\}}\right]\right)\\
     &\le M_\delta\left\|L_{t,s}\right\|_q\frac{1}{\sqrt{s}}\left\|\frac{1}{\rho_1}\right\|_p\left(\P\big(\tau^{-h}_\varphi< s\le\tau^{h}_\varphi\big)^{\frac12}+\P\big(\rho_s\le h\big)^{\frac12}\right)\\
     &\le \frac{C_\cR}{\sqrt{s}}\left(\P\big(\tau^{-h}_\varphi< s\le\tau^{h}_\varphi\big)^{\frac12}+\P\big(\rho_s\le h\big)^{\frac12}\right),
\end{aligned}
\end{align}
where in the final inequality we used Lemma \ref{lem:expbd} and \eqref{eq:rhoint}.
Then
\begin{align*}
\int_0^{T_1-t}h^{-1}\left|\bar V^h_s(t)-hV_s(t)\right|\ud s
   &\leq C_\cR\left(\int_0^{T_1}
          \sqrt{\P\left(\tau^{-h}_\varphi< s\le\tau^{h}_\varphi\right)}\frac{\ud s}{\sqrt{s}}+
              \int_0^{T_1}   \sqrt{ \P\left( \rho_s\le h\right) }    \frac{\ud s}{\sqrt{s}}   \right).    
\end{align*}
By dominated convergence, we have 
$\lim_{h\downarrow 0}\int_0^{T_1}   \sqrt{ \P(\rho_s\le h) }    \frac{\ud s}{\sqrt{s}}=0$. 
For the other integral, given $\delta\in[0,T_1)$ we have 
\begin{align*}
\P\left(\tau^{-h}_\varphi< s\le\tau^{h}_\varphi\right)\leq
  \P\left(\tau^{-h}_\varphi< s-\delta,\tau^{h}_\varphi\ge s\right)\!+\!\P\left(s\!-\!\delta\leq \tau^{-h}_\varphi<s\right)\leq \P\left(\sigma^\beta_{2h}\geq \delta\right)\!+\!C\frac{\sqrt{\delta}}{\sqrt{s-\delta}},
\end{align*}
for $s\in (\delta,T_1]$, by Lemma \ref{lem:3dBp1} and Lemma \ref{lem:pbdtauh} and with a constant $C>0$ independent of $(t,s,h,\delta)$.
Integrating with respect to $s$, we get
\begin{align*}
\int_0^{T_1}
          \sqrt{\P\left(\tau^{-h}_\varphi< s\le\tau^{h}_\varphi\right)}\frac{\ud s}{\sqrt{s}}&\leq
          \int_0^\delta\frac{\ud s}{\sqrt{s}}+
          \int_\delta^{T_1}\P\left(\sigma^\beta_{2h}\geq \delta\right)^\frac12\frac{\ud s}{\sqrt{s}}+C^\frac12\delta^{\frac14}\int_\delta^{T_1}\Big(\frac{1}{\sqrt{s-\delta}}\Big)^\frac12
          \frac{\ud s}{\sqrt{s}}\\
          &\leq 2\sqrt{\delta}+2\sqrt{T_1}\P\left(\sigma^\beta_{2h}\geq \delta\right)^\frac12+C^\frac12\delta^{\frac14}\int_0^{T_1-\delta}\frac{1}{s^{\frac14}}\frac{\ud s}{\sqrt{s+\delta}}.
\end{align*}
Hence, by dominated convergence,
\[
\limsup_{h\downarrow 0}
\left(\int_0^{T_1} \sqrt{\P\left(\tau^{-h}_\varphi< s\le\tau^{h}_\varphi\right)}\frac{\ud s}{\sqrt{s}}\right)\leq C\big(
 \sqrt{\delta}+\delta^{1/4}\big).
\]
Since $\delta$ can be made arbitrarily small, we have 
$\lim_{h\downarrow 0}\sup_{0\le t\le T_1}\int_0^{T_1-t}|h^{-1}\bar V^h_s(t)-V_s(t)|\ud s=0$.

This concludes the proof of \eqref{eq:limV} and therefore Theorem \ref{thm:expansion} holds.
\end{proof} 

\section{Proof of Theorem \ref{thm:main}}\label{sec:proofmain}

Theorem \ref{thm:expansion} in the previous section shows that 
\begin{align}\label{eq:expansion}
\dot w(t,c(t)+h)=h\Big(V_1(t)+V_2(t)+\int_0^{T_1-t}V_s(t)\ud s\Big)+o(h),\quad\text{for any $t\in[0,T_2]$},
\end{align}
where $o(h)$ stands for a remainder of a smaller order than $h$, uniformly for $t\in[0,T_2]$. We will use this fact to prove Theorem \ref{thm:main} in two steps. For the ease of presentation, first we state Lemma \ref{lem:lambdacont} concerning continuity of the expression in bracket in \eqref{eq:expansion}. Then we use the lemma to derive a formula for $\dot b(t)$ for $t\in[0,T_1)$ in Proposition \ref{prop:dotv}. The proof of Lemma \ref{lem:lambdacont} is given after the proof of the proposition.

\begin{lemma}\label{lem:lambdacont}
Under Assumption \ref{ass:main} the mapping
\begin{align}\label{eq:Lambda2}
\begin{aligned}
t\mapsto\Big(V_1(t)+V_2(t)+\int_0^{T_1-t}V_s(t)\ud s\Big),
\end{aligned}
\end{align}
is continuous on $[0,T_1)$, with $V_1(t)$, $V_2(t)$ and $V_s(t)$ as in Theorem \ref{thm:expansion}.
\end{lemma}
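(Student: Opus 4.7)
Fix $t_0\in[0,T_1)$ and take a sequence $t_n\to t_0$. Choose $T_2\in(t_0,T_1)$ so that eventually $t_n\in[0,T_2]$. The strategy is to establish a.s.\ continuity at $t_0$ of each integrand as a random variable in $\omega$, and then to justify dominated convergence using Lemma \ref{lem:expbd} and the integrability estimate \eqref{eq:rhoint}.

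\textbf{Step 1 (a.s.\ continuity of $\theta^{(t)}$).} Set $g_t(s)\coloneqq y_2-c(t+s)$ and $\tau^{(t)}\coloneqq\inf\{s\ge0:\rho_s\ge g_t(s)\}$, so that $\theta^{(t)}=\tau^{(t)}\wedge(T_1-t)$. Since $c$ is Lipschitz on $[0,T_1]$, the family $\{g_t\}$ is uniformly Lipschitz and converges uniformly on compacts to $g_{t_0}$. The inequality $\liminf_n\tau^{(t_n)}\ge\tau^{(t_0)}$ follows from uniform convergence of $g_{t_n}$. For $\limsup_n\tau^{(t_n)}\le\tau^{(t_0)}$, we use that after $\tau^{(t_0)}$ the process $\rho_s-g_{t_0}(s)$ decomposes via \eqref{eq:SDErho} into a Brownian motion plus a bounded-variation term, hence by the law of iterated logarithm it takes strictly positive values in every right-neighbourhood of $\tau^{(t_0)}$ almost surely. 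Combined with $T_1-t_n\to T_1-t_0$ this gives $\theta^{(t_n)}\to\theta^{(t_0)}$ a.s.

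\textbf{Step 2 (a.s.\ convergence of the integrand).} Because $\gamma$, $R$, $F$, $c$ and $\dot w$ are continuous and bounded (Remark \ref{rem:gammaR}), the a.s.\ convergence $\theta^{(t_n)}\to\theta^{(t_0)}$ together with dominated convergence for the stochastic integral $\int_0^{\theta^{(t_n)}}\gamma_{t_n,v}(\rho_v)\,\ud\rho_v$ (the integrand being uniformly bounded and depending continuously on $t$) yields
\[
L_{t_n,\theta^{(t_n)}}\to L_{t_0,\theta^{(t_0)}},\qquad D_{t_n,\theta^{(t_n)}}\to D_{t_0,\theta^{(t_0)}},\qquad\text{a.s.}
\]
and analogously the terminal values $\dot w(t_n+\theta^{(t_n)},c(t_n+\theta^{(t_n)})+\rho_{\theta^{(t_n)}})$ converge to the corresponding quantity at $t_0$.

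\textbf{Step 3 (convergence of $V_1+V_2$).} Using the compact form of Remark \ref{rem:V1+V2},
\[
V_1(t)+V_2(t)=\E\Big[\frac{L_{t,\theta}}{\rho_\theta}D_{t,\theta}\dot w\big(t+\theta,c(t+\theta)+\rho_\theta\big)\Big],
\]
split the expectation along $\{\theta<T_1-t\}$ and $\{\theta=T_1-t\}$. On the first set $\rho_\theta\ge\bar c>0$ (Remark \ref{rem:welldef}), giving the bound $M_\delta L_{t,\theta}/\bar c$, integrable uniformly in $t$ by Lemma \ref{lem:expbd}. On the second set, H\"older with $p\in(1,3)$ and the scaling $\rho_{T_1-t}=\sqrt{T_1-t}\,\rho_1$ give the bound
\[
\E\Big[\frac{L_{t,T_1-t}}{\rho_{T_1-t}}\Big]\le\frac{1}{\sqrt{T_1-T_2}}\Big\|\frac{1}{\rho_1}\Big\|_p\|L_{t,T_1-t}\|_{p/(p-1)}\le \frac{C_\cR}{\sqrt{T_1-T_2}},
\]
uniformly for $t\in[0,T_2]$. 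Uniform integrability follows from these uniform bounds plus Vitali's convergence theorem, hence $V_1(t_n)+V_2(t_n)\to V_1(t_0)+V_2(t_0)$.

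\textbf{Step 4 (convergence of the integral term).} For each fixed $s\in(0,T_1-t_0)$ the arguments of Steps 1--2 give $V_s(t_n)\to V_s(t_0)$, as the event $\{s<\theta^{(t)}\}$ is a.s.\ continuous in $t$ by the transversality argument of Step 1. For uniform domination in $s$, H\"older's inequality yields
\[
|V_s(t)|\le \frac{M_\delta}{\sqrt{s}}\Big\|\frac{1}{\rho_1}\Big\|_p\|L_{t,s}\|_{p/(p-1)}\le\frac{C_\cR}{\sqrt{s}},
\]
uniformly for $t\in[0,T_2]$ and $s\in(0,T_1-t]$, which is integrable on $(0,T_1]$. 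Extending $V_s(t_n)$ by zero for $s\ge T_1-t_n$, dominated convergence on $s$ gives $\int_0^{T_1-t_n}V_s(t_n)\,\ud s\to\int_0^{T_1-t_0}V_s(t_0)\,\ud s$.

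\textbf{Main obstacle.} The genuinely delicate step is Step 1: proving a.s.\ continuity of $\tau^{(t)}$ at $t_0$. What prevents the naive argument is the possibility that $\rho$ touches the moving barrier $g_{t_0}$ tangentially at $\tau^{(t_0)}$; the local Brownian/LIL argument rules this out because the effective drift in $\rho-g_{t_0}$ is bounded while the martingale part is that of a Brownian motion. All the remaining steps are standard applications of dominated convergence once the uniform moment bounds from Lemma \ref{lem:expbd} and \eqref{eq:rhoint} are in place.
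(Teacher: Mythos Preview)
Your overall architecture---a.s.\ continuity of $\theta^{(t)}$, then a.s.\ convergence of the integrands, then uniform integrability from Lemma \ref{lem:expbd} and \eqref{eq:rhoint}, then Vitali/dominated convergence---matches the paper's. Your Step~1 is essentially the content of the paper's Lemma \ref{lem:contheta}, and your Steps~3--4 are correct in substance.

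The genuine gap is in Step~2. You write that ``the integrand [is] uniformly bounded and depending continuously on $t$'', but this is not true: recall from \eqref{eq:gammatv} that
\[
\gamma_{t,v}(\xi)=\gamma\big(t+v,c(t+v)+\xi\big)-\dot c(t+v),
\]
and at this stage of the argument the boundary is only known to be Lipschitz, so $\dot c$ is merely bounded and defined a.e., \emph{not} continuous. (Assuming $\dot c$ continuous would be circular, since that is exactly what Lemma \ref{lem:lambdacont} is used to prove.) Hence pointwise-in-$v$ continuity of $t\mapsto\gamma_{t,v}(\rho_v)$ fails, and the ``dominated convergence for the stochastic integral'' you invoke does not apply directly.

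The paper addresses this explicitly: it splits $\gamma_{t,v}=\gamma^0_{t,v}-\gamma^1_{t+v}$ with $\gamma^1_s=\dot c(s)$, handles the continuous part $\gamma^0$ by dominated convergence, and treats the $\gamma^1$ contribution via continuity of translations in $L^p$, i.e.
\[
\big\|\dot c_{T_1}(t_n+\cdot)-\dot c_{T_1}(t+\cdot)\big\|_{L^p(0,T_1)}\longrightarrow 0,
\]
which suffices for both the $\ud\beta^0$-integral (via It\^o isometry) and the $\rho_v^{-1}\,\ud v$-integral (via H\"older with $q\in(1,2)$ so that $\int_0^{T_1}\rho_v^{-q}\,\ud v$ has finite expectation). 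The same correction is needed in Step~4 for the convergence $L_{t_n,s}\to L_{t_0,s}$. Once you insert this decomposition, your proof goes through.
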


\begin{proposition}\label{prop:dotb}
Under Assumption \ref{ass:main} we have
\begin{align}\label{eq:bdot}
\begin{aligned}
\dot b(t)=\frac{\sigma\big(b(t)\big)}{2h\big(t,b(t)\big)}\Big(V_1(t)+V_2(t)+\int_0^{T_1-t}V_s(t)\ud s\Big),\quad t\in[0,T_1),
\end{aligned}
\end{align}
with $V_1(t)$, $V_2(t)$ and $V_s(t)$ as in Theorem \ref{thm:expansion}.
\end{proposition}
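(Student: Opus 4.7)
\medskip

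The plan is to make rigorous the implicit-function-theorem heuristic in Section \ref{sec:outline}. First, Theorem \ref{thm:expansion} must be read in the original $(t,x)$ coordinates. Since $f'(x)=1/\sigma(x)$, for $\delta=x-b(t)>0$ one has $h=y-c(t)=\delta/\sigma(b(t))+o(\delta)$, and the expansion of the theorem becomes
\[
\dot u(t,b(t)+\delta)=\frac{\Lambda(t)}{\sigma(b(t))}\,\delta+o(\delta),\qquad \delta\downarrow 0,
\]
uniformly in $t\in[0,T_2]$ for any $T_2<T_1$, where $\Lambda(t)\coloneqq V_1(t)+V_2(t)+\int_0^{T_1-t}V_s(t)\,\mathrm{d}s$. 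In particular the one-sided mixed derivative $\dot u_x(t,b(t)^+)=\Lambda(t)/\sigma(b(t))$ exists and is continuous by Lemma \ref{lem:lambdacont}. In parallel, evaluating \eqref{eq:h} at $(t,b(t))$ and using the three vanishing conditions $u=u_x=\dot u=0$ on $b$ yields
\[
u_{xx}(t,b(t))=-\frac{2\,h(t,b(t))}{\sigma^2(b(t))},
\]
which is strictly positive by Assumption \ref{ass:main}(ii) and continuous in $t$.

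The main analytical step is to combine these two facts with the identity $u(t,b(t))=u(s,b(s))=0$. Fix $t_0\in[0,T_1)$ and $s$ close to it; consider the regime $b(s)\leq b(t_0)$, the opposite sign being symmetric. As in the derivation of \eqref{eq:lipb1}, split the null increment of $u$ into a time piece and a space piece and use $u_x(s,b(s))=0$ together with Fubini to obtain
\[
-\!\int_s^{t_0}\!\!\dot u(\lambda,b(t_0))\,\mathrm{d}\lambda = \int_{b(s)}^{b(t_0)}\!(b(t_0)-z)\,u_{xx}(s,z)\,\mathrm{d}z .
\]
On the left-hand side, $(\lambda,b(t_0))\in\overline{\mathcal C}$ for $\lambda$ close enough to $t_0$ since $b(\lambda)\leq b(t_0)+L|t_0-\lambda|$ (by Lipschitz), so the expansion of $\dot u$ applies and, together with continuity of $\Lambda/\sigma(b(\cdot))$, gives
$-\frac{\Lambda(t_0)}{\sigma(b(t_0))}\int_s^{t_0}(b(t_0)-b(\lambda))\,\mathrm{d}\lambda+o\big((t_0-s)^2\big)$. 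On the right-hand side, continuity of $u_{xx}$ up to $b$ yields $\tfrac12 u_{xx}(t_0,b(t_0))(b(t_0)-b(s))^2+o((t_0-s)^2)$.

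Finally, divide by $(t_0-s)^2$ and analyse the Dini derivatives of $b$ at $t_0$. At any time $t_0$ at which $b$ is differentiable (a.e.\ by Rademacher), one has
$\int_s^{t_0}(b(t_0)-b(\lambda))\,\mathrm{d}\lambda=\dot b(t_0)\,(t_0-s)^2/2+o((t_0-s)^2)$ and $(b(t_0)-b(s))^2=\dot b(t_0)^2(t_0-s)^2+o((t_0-s)^2)$, so that $\dot b(t_0)$ solves the quadratic equation
\[
u_{xx}(t_0,b(t_0))\,\dot b(t_0)^2+\frac{\Lambda(t_0)}{\sigma(b(t_0))}\,\dot b(t_0)=0,
\]
whose roots are $0$ and $m_*(t_0)\coloneqq \sigma(b(t_0))\Lambda(t_0)/(2h(t_0,b(t_0)))$. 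Since $m_*$ is continuous on $[0,T_1)$ by Lemma \ref{lem:lambdacont}, and $b$ is absolutely continuous, the equality $\dot b=m_*$ on the set of differentiability points will extend to give $\dot b\equiv m_*$ on $[0,T_1)$ by a standard absolute-continuity/continuity argument, proving both \eqref{eq:bdot} and continuity of $\dot b$. I expect the main obstacle to be the principled selection of the non-trivial root $m_*(t_0)$ at every point (as opposed to merely almost everywhere), which will be driven by the uniformity of the expansion of Theorem \ref{thm:expansion} and the continuity of $m_*$, together with a careful treatment of the sign of $b(t_0)-b(s)$ in the non-monotone case.
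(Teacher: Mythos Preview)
Your approach diverges from the paper's at the key analytical step, and the divergence creates a gap you have correctly flagged but not closed. The paper does \emph{not} work with the identity \eqref{eq:lipb1}. Instead it introduces the level curves $b_\delta(t)$ defined by $u(t,b_\delta(t))=\delta$, applies the implicit function theorem to get the smooth ODE $\dot b_\delta=-\dot u(t,b_\delta)/u_x(t,b_\delta)$, and then expands \emph{both} numerator and denominator to first order in $h_\delta(t)=c_\delta(t)-c(t)$ using Theorem~\ref{thm:expansion} and the relation $w_y(t,c_\delta)=h_\delta\int_0^1 w_{yy}(t,c+zh_\delta)\,\ud z$. The factor $h_\delta(t)$ cancels in the ratio, leaving a bounded integrand; integrating over $[t_1,t_2]$ and passing to the limit $\delta\downarrow 0$ by dominated convergence yields $c(t_2)-c(t_1)=\int_{t_1}^{t_2}\Lambda(t)/(2h(t,b(t)))\,\ud t$ directly. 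There is never a quadratic equation and hence no root to select.

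By contrast, your identity matches a term of order $\int_s^{t_0}(b(t_0)-b(\lambda))\,\ud\lambda$ against one of order $(b(t_0)-b(s))^2$. Both are $O((t_0-s)^2)$ when $\dot b(t_0)\neq 0$, but both are $o((t_0-s)^2)$ when $\dot b(t_0)=0$, so the relation is vacuously satisfied in the latter case. This is precisely why the root $\dot b(t_0)=0$ cannot be excluded from your quadratic by the information you have assembled: at points where $\dot b(t_0)=0$, your expansion carries no constraint. The ``standard absolute-continuity/continuity argument'' you invoke would need as input that $\dot b=m_*$ almost everywhere, but you have only shown $\dot b\in\{0,m_*\}$ almost everywhere, and there is no a~priori reason the set $\{\dot b=0\}$ should be null when $m_*\neq 0$. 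Neither the uniformity in Theorem~\ref{thm:expansion} nor the continuity of $m_*$ supplies the missing selection principle; you would need an independent argument (e.g., a strict monotonicity or sign constraint) that is not available in the general non-monotone setting. The level-curve device in the paper is exactly what sidesteps this difficulty.
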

\begin{proof}
Recall the set $\cR=(0,T_1)\times(x_1,x_2)$ such that $x_1<b(t)<x_2$ for all $t\in[0,T_1]$. Let us fix $t_0\in[0,T_1)$. Since $u$ is continuously differentiable in $\overline\cR$, then $u_{xx}$ is bounded and continuous in the closure of $\cC\cap\cR$ thanks to \eqref{eq:h}. Moreover, using $u(t,x)=\dot u(t,x)=u_x(t,x)=0$ for $x\le b(t)$, and taking limits as $\cC\ni(t,x)\to (t_0,b(t_0))$ we obtain from \eqref{eq:h}
\begin{align}\label{eq:limuxx}
\begin{aligned}
\lim_{(t,x)\to(t_0,b(t_0))}u_{xx}\big(t,x\big)=-\frac{2h\big(t_0,b(t_0)\big)}{\sigma^2\big(b(t_0)\big)}\eqqcolon c_0>0,
\end{aligned}
\end{align}
where the strict inequality is due to \eqref{eq:h<0}. Then,  there is $\eps_0>0$ such that, with the notation $I_0\coloneqq \big((t_0-\eps_0)^+,(t_0+\eps_0)\wedge T_1\big)$, we have
\begin{align*}
u_{xx}(t,x)\ge \tfrac{c_0}{2}>0,\quad \text{for $0< x-b(t)\le \eps_0$ and $t\in\overline I_0$}.
\end{align*}
It follows by continuity of $u_x$ and boundedness of $u_{xx}$ in $\cR\cap \cC$ that, for any $(t,x)\in\cR$ such that $0< x-b(t)\le \eps_0$ and $t\in\overline I_0$, 
\begin{align*}
u_x(t,x)=\int_{b(t)}^x u_{xx}(t,z)\ud z\ge \tfrac{c_0}{2}\big(x-b(t)\big)>0.
\end{align*}
The latter implies that for $\delta>0$ sufficiently small there is a unique solution of the equation $u(t,x)=\delta$ for $x\in(b(t),b(t)+\eps_0)$. We denote it $b_\delta(t)$, so that $u(t,b_\delta(t))=\delta$. Clearly $b_\delta\in C^1(I_0)$ by the implicit function theorem with
\begin{align}\label{eq:dotbd}
\dot b_\delta(t)=-\frac{\dot u\big(t,b_\delta(t)\big)}{u_x\big(t,b_\delta(t)\big)},\quad t\in I_0.
\end{align}
Moreover, $b_\delta(t)\downarrow b(t)$ as $\delta\to 0$, for all $t\in\overline I_0$. Since $b_\delta$ and $b$ are continuous and the convergence is monotone, by Dini's theorem the convergence is indeed uniform on $\overline I_0$. 

Recalling the Lamperti transform and denoting $c_\delta(t)=f(b_\delta(t))$ it is convenient to rewrite the ODE above as\footnote{The ODE for $c_\delta(t)$ can also be derived directly from $w(t,c_\delta(t))=w(t,f(b_\delta(t)))=u(t,b_\delta(t))=\delta$.}
\begin{align}\label{eq:cdotdelta}
\dot c_\delta(t)=-\frac{\dot w\big(t,c_\delta(t)\big)}{w_y\big(t,c_\delta(t)\big)},\quad t\in I_0,
\end{align}
and recall the relation $\dot c_\delta(t)=\dot b_\delta(t)/\sigma(b_\delta(t))$. Setting $h_\delta(t)=c_\delta(t)-c(t)>0$ we have 
\[
\lim_{\delta \to 0}\sup_{t\in\overline I_0}h_\delta(t)=0.
\]
By Theorem \ref{thm:expansion}, it then follows 
\begin{align}\label{eq:ODE0}
\dot w\big(t,c_\delta(t)\big)=h_\delta(t)\Big(V_1(t)+V_2(t)+\int_0^{T_1-t}V_s(t)\ud s\Big)+\phi\big(h_\delta(t)\big),
\end{align}
where $\phi$ is a function such that
\begin{align}\label{eq:limphi}
\lim_{\delta\to 0}\Big(\sup_{t\in\overline I_0} \frac{\big|\phi\big(h_\delta(t)\big)\big|}{h_\delta(t)}\Big)=0.
\end{align}

For the denominator in \eqref{eq:cdotdelta} we can use the expression
\begin{align}\label{eq:ODE1}
w_y(t,c_\delta(t))=\int_{c(t)}^{c_\delta(t)}w_{yy}(t,z)\ud z=h_\delta(t)\int_{0}^{1}w_{yy}\big(t,c(t)+z h_\delta(t)\big)\ud z.
\end{align}
Since $u_{xx}$ is bounded and continuous in $\overline{\cC\cap\cR}$ and
\begin{align*}
w_y(t,y)&=u_x(t,f^{-1}(y))\sigma(f^{-1}(y)),\\
w_{yy}(t,y)&=u_{xx}(t,f^{-1}(y))\sigma^2(f^{-1}(y))+\sigma_x(f^{-1}(y))\sigma(f^{-1}(y))u_x(t,f^{-1}(y)),
\end{align*}
then it is clear that $w_{yy}$ is bounded on $\cR_f\cap\cC_f$, where $\cC_f=\{(t,y):y>c(t)\}$. Moreover, taking limits as $\cC_f\ni(t,y)\to (t_0,c(t_0))$ 
\begin{align}\label{eq:limwxx}
\begin{aligned}
\lim_{(t,y)\to(t_0,y_0)}w_{yy}\big(t,y\big)=-2 h\big(t_0,f^{-1}\big(c(t_0)\big)\big)=-2h\big(t_0,b(t_0)\big)\eqqcolon c'_0>0,
\end{aligned}
\end{align}
by \eqref{eq:limuxx}. Substituting \eqref{eq:ODE0} and \eqref{eq:ODE1} into \eqref{eq:cdotdelta} and integrating over an arbitrary interval $(t_1,t_2)\subset I_0$, yields
\begin{align}\label{eq:c-c}
\begin{aligned}
c_\delta(t_2)-c_\delta(t_1)=-\int_{t_1}^{t_2}\frac{\Lambda_{T_1}(t)+h^{-1}_\delta(t)\phi\big(h_\delta(t)\big)}{\int_{0}^{1}w_{yy}\big(t,c(t)+z h_\delta(t)\big)\ud z}\ud t,
\end{aligned}
\end{align}
where we set
\begin{align}\label{eq:Lambda}
\Lambda_{T_1}(t)\coloneqq V_1(t)+V_2(t)+\int_0^{T_1-t}V_s(t)\ud s.
\end{align}
Notice that with no loss of generality we can assume 
$\int_{0}^{1}w_{yy}\big(t,c(t)+z h_\delta(t)\big)\ud z\ge \tfrac{1}{2}c'_0>0$,
thanks to \eqref{eq:limwxx}.
Taking limits as $\delta \to 0$ in \eqref{eq:c-c}, invoking dominated convergence and using \eqref{eq:limphi} and \eqref{eq:limwxx} we obtain
\begin{align}\label{eq:c-c1}
\begin{aligned}
c(t_2)-c(t_1)=\int_{t_1}^{t_2}\frac{\Lambda_{T_1}(t)}{2h(t,b(t))}\ud t.
\end{aligned}
\end{align}
Thanks to Lemma \ref{lem:lambdacont}, $t\mapsto \Lambda_{T_1}(t)$ is continuous on $[0,T_1)$. Therefore, \eqref{eq:c-c1} and the equivalence $\dot c(t)=\dot b(t)/\sigma(b(t))$ imply
\[
\dot b(t)=\frac{\sigma(b(t))}{2h(t,b(t))}\Big(V_1(t)+V_2(t)+\int_0^{T_1-t}V_s(t)\ud s\Big),\quad t\in I_0.
\]
Since the interval $I_0$ is centred around $t_0$ and $t_0\in[0,T_1)$ is arbitrary, the proof is complete.
\end{proof}

It remains to prove Lemma \ref{lem:lambdacont}. For that, it is convenient to state separately an auxiliary lemma.
We recall that $\theta=\tau^0_{\varphi}\wedge(T_1-t)$, with $\varphi(s)=\varphi_t(s)\coloneqq c((t+s)\wedge T_1)-y_2$. Since $y_2$ is fixed, in order to keep track of the (double) dependence of $\theta$ on $t$ we relabel $\theta=\theta(t)$.
\begin{lemma}\label{lem:contheta}
Fix $t\in[0,T_1)$. For any sequence $(t_n)_{n\in\N}\subset[0,T_1]$ such that $t_n\to t$ as $n\to \infty$ we have 
$\lim_{n\to\infty}\theta(t_n)=\theta(t)$, $\P$-a.s.
\end{lemma}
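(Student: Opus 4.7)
The plan is to combine uniform convergence of the drift functions with a strict-overshoot property of the Bessel process at the first crossing time, sandwiching the random time $\tau^0_{\varphi_{t_n}}$ between two deterministic perturbations of $\tau^0_{\varphi_t}$. Since $c$ is Lipschitz on $[0,T_1]$ with some constant $c_\varphi$ (as established in Section \ref{sec:lip}) and $t\mapsto(t+s)\wedge T_1$ is $1$-Lipschitz, one has the deterministic bound
\[
\sup_{s\ge 0}|\varphi_{t_n}(s)-\varphi_t(s)|\le c_\varphi|t_n-t|=:h_n.
\]
Writing $\tau:=\tau^0_{\varphi_t}$ and using $\varphi_t-h_n\le\varphi_{t_n}\le\varphi_t+h_n$, the definitions in \eqref{eq:tauphi} yield the sandwich
\[
\tau^{-h_n}_{\varphi_t}\le\tau^0_{\varphi_{t_n}}\le\tau^{+h_n}_{\varphi_t}.
\]
Lemma \ref{lem:convth} already gives $\tau^{-h_n}_{\varphi_t}\to\tau$ almost surely, so it remains to prove the matching upper bound $\tau^{+h_n}_{\varphi_t}\to\tau$.

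For this I would apply the strong Markov property of $\rho$ at $\tau$. Because $\rho_s\to\infty$ and $\varphi_t$ is bounded, $\tau<\infty$ a.s.; continuity gives $\rho_\tau+\varphi_t(\tau)=0$, hence $\rho_\tau=y_2-c((t+\tau)\wedge T_1)>0$. Writing the Bessel SDE after $\tau$ as $\rho_{\tau+r}=\rho_\tau+\int_0^r\rho_{\tau+u}^{-1}du+\widetilde\beta_r$ with $\widetilde\beta$ a Brownian motion independent of $\cF^\rho_\tau$, and using $\varphi_t(\tau+r)-\varphi_t(\tau)\ge-c_\varphi r$, one obtains
\[
\rho_{\tau+r}+\varphi_t(\tau+r)\ge\widetilde\beta_r-c_\varphi r.
\]
The law of iterated logarithm for $\widetilde\beta$ at $0$ implies that for every $\eta>0$ there a.s.\ exists $r\in(0,\eta)$ with $\widetilde\beta_r-c_\varphi r>0$ strictly. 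For such an $r$, set $\delta(\eta,\omega):=\rho_{\tau+r}+\varphi_t(\tau+r)>0$; then $\tau^{+h}_{\varphi_t}\le\tau+r\le\tau+\eta$ for every $h\in(0,\delta(\eta,\omega))$. Letting first $h\downarrow 0$ and then $\eta\downarrow 0$, together with the trivial lower bound $\tau^{+h}_{\varphi_t}\ge\tau$, yields $\tau^{+h_n}_{\varphi_t}\to\tau$ a.s., and the sandwich gives $\tau^0_{\varphi_{t_n}}\to\tau$ a.s.

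It remains to incorporate the truncation $\theta(t)=\tau\wedge(T_1-t)$. The event $\{\tau=T_1-t\}$ forces $\rho_{T_1-t}=y_2-c(T_1)$ and so has probability zero, since $\rho_{T_1-t}$ admits a density. On the event $\{\tau<T_1-t\}$, $T_1-t_n\to T_1-t>\tau$, so for large $n$ the minimum in $\theta(t_n)$ is attained by $\tau^0_{\varphi_{t_n}}$, giving $\theta(t_n)\to\tau=\theta(t)$. On $\{\tau>T_1-t\}$, choose $\eps>0$ with $T_1-t+\eps<\tau$; then $\sup_{s\in[0,T_1-t+\eps]}(\rho_s+\varphi_t(s))<0$, and the uniform bound propagates this to $\sup_{s\in[0,T_1-t_n]}(\rho_s+\varphi_{t_n}(s))<0$ for large $n$, whence $\theta(t_n)=T_1-t_n\to T_1-t=\theta(t)$. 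The main obstacle is the upper-bound convergence $\tau^{+h_n}_{\varphi_t}\to\tau$, resolved by the LIL argument above; all other steps reduce to continuity, the sandwich, and the density estimate for $\rho_{T_1-t}$.
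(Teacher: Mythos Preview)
Your argument is correct and hinges on the same key ingredient as the paper's proof: the law of iterated logarithm applied to the Brownian part of $\rho$ immediately after $\tau=\tau^0_{\varphi_t}$, yielding a strict overshoot and hence $\tau^{+h}_{\varphi_t}\downarrow\tau$. Your organisation via the sandwich $\tau^{-h_n}_{\varphi_t}\le\tau^0_{\varphi_{t_n}}\le\tau^{+h_n}_{\varphi_t}$ (deducing convergence of $\tau^0_{\varphi_{t_n}}$ first and then treating the truncation by $T_1-t$) is slightly cleaner than the paper's direct liminf/limsup split on $\theta$, but the substance is the same. One minor remark: the Lipschitz property of $c$ on $[0,T_1]$ is the standing hypothesis of Theorem~\ref{thm:main}, not something imported from Section~\ref{sec:lip} (which only treats a special time-homogeneous case); also note that Lemma~\ref{lem:convth} is stated for monotone $h_n\downarrow 0$, but your sequence $h_n=c_\varphi|t_n-t|$ need not be monotone---this is harmless since the proof of that lemma goes through verbatim for any $h_n\to 0$.
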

\begin{proof}
We prove the result in two steps. First we show $\liminf_{n\to\infty}\theta(t_n)\ge \theta(t)$, $\P$-a.s. Fix $\omega\in\Omega$ and consider $\theta(t,\omega)=\tau^0_{\varphi_t}(\omega)\wedge(T_1-t)$. The result is trivial if $\theta(t,\omega)=0$, therefore we consider $\theta(t,\omega)>0$. Let $\delta>0$ be such that $\theta(t,\omega)>\delta$. Then there exists $\eps=\eps(\delta,\omega)>0$ such that 
\[
\rho_s(\omega)\le y_2-c((t+s)\wedge T_1)-\eps,\quad \forall s\in[0,\delta].
\]
Since $c(\cdot)$ is Lipschitz on $[0,T_1]$ then $c((t_n+s)\wedge T_1)-c((t+s)\wedge T_1)\le c_\varphi|t-t_n|$ and taking $|t-t_n|\le \eps/(2c_\varphi)$ yields
\begin{align*}
\rho_s(\omega)&\le y_2-c((t_n+s)\wedge T_1)+\big(c((t_n+s)\wedge T_1)-c((t+s)\wedge T_1)\big)-\eps\\
&\le y_2-c((t_n+s)\wedge T_1)+c_\varphi|t-t_n|-\eps\\
&\le y_2-c((t_n+s)\wedge T_1)-\eps/2,\quad \forall s\in[0,\delta].
\end{align*}
Thus $\theta(t_n,\omega)>\delta$ and $\liminf_{n\to\infty}\theta(t_n,\omega)\ge \delta$. We obtain $\liminf_{n\to\infty}\theta(t_n,\omega)\ge \theta(t,\omega)$ for $\P$-a.e.\ $\omega\in\Omega$, by arbitrariness of $\delta$ and $\omega$.

Next we want to show $\limsup_{n\to\infty}\theta(t_n)\le \theta(t)$, $\P$-a.s., so that combining the latter with the result above we conclude the proof of the lemma. On the event $\{\theta(t)=T_1-t\}$, since $\theta(t_n)\le T_1-t_n$ for every $n\in\N$, we have $\limsup_{n\to \infty}\theta(t_n)\le \theta(t)$. Let us now consider the event $\{\theta(t)<T_1-t\}$. Set $\tau'_{\varphi_t}\coloneqq\inf\{u\ge 0: \rho_u+\varphi(t+u)>0\}$. Clearly $\tau'_{\varphi_t}\ge \tau^0_{\varphi_t}$. Now we show that $\theta(t)=\tau^0_{\varphi_t}=\tau'_{\varphi_t}$ on $\{\theta(t)<T_1-t\}$. That can be deduced as follows: on the event $\{\theta(t)<T_1-t\}$, using $\rho_{\tau_{\varphi_t}^0}=y_2-c(t+\varphi_s^0)$ we have
\begin{align*}
&\rho_{\tau_{\varphi_t}^0+u}+c(t+\tau_{\varphi_t}^0+u)\\
&=\rho_{\tau_{\varphi_t}^0+u}-\rho_{\tau_{\varphi_t}^0}+y_2-c(t+\tau_{\varphi_t}^0)+c(t+\tau_{\varphi_t}^0+u)\\
&=\int_{\tau^0_{\varphi_t}}^{\tau^0_{\varphi_t}+u}\frac{1}{\rho_r}\ud r+\beta^0_{\tau^0_{\varphi_t}+u}-\beta^0_{\tau^0_{\varphi_t}}+y_2-c(t+\tau^0_{\varphi_t})+c(t+\tau_{\varphi_t}^0+u)\\
&>\beta^0_{\tau^0_{\varphi_t}+u}-\beta^0_{\tau^0_{\varphi_t}}+y_2-c_\varphi u.
\end{align*}
By the law of iterated logarithm, for any $\delta>0$ there exists $u\in(0,\delta)$ such that $\beta^0_{\tau^0_{\varphi_t}+u}-\beta^0_{\tau^0_{\varphi_t}}>c_\varphi u$. Then, $\rho_{\tau_{\varphi_t}^0+u}+c(t+\tau_{\varphi_t}^0+u)>y_2$ for any such $u\in(0,\delta)$, implying $\tau'_{\varphi_t}\le \tau^0_{\varphi_t}+\delta$. By arbitrariness of $\delta>0$ we conclude that $\tau'_{\varphi_t}= \tau^0_{\varphi_t}$ on $\{\theta(t)<T_1-t\}$.

Since $\theta(t)=\tau^0_{\varphi_t}=\tau'_{\varphi_t}$ on $\{\theta(t)<T_1-t\}$, for any $\delta>0$ there is $u\in(0,\delta)$ such that $\rho_{\tau^0_{\varphi_t}+u}+c(t+\tau^0_{\varphi_t}+u)>y_2$. Thus, by continuity of the boundary we can find $n\in\N$ sufficiently large so that $\rho_{\tau^0_{\varphi_t}+u}+c(t_n+\tau^0_{\varphi_t}+u)>y_2$. Hence $\theta(t_n)\le \tau^0_{\varphi_t}+u=\theta(t)+u$ and $\limsup_{n\to\infty}\theta(t_n)\le \theta(t)+\delta$. By arbitrariness of $\delta>0$ we conclude that $\limsup_{n\to\infty}\theta(t_n)\le \theta(t)$ also on $\{\theta(t)<T_1-t\}$.  
\end{proof}

\begin{proof}[{\bf Proof of Lemma \ref{lem:lambdacont}}]
Let us study continuity of $t\mapsto V_1(t)+V_2(t)$, first.
From Remark \ref{rem:V1+V2} we can rewrite
\[
V_1(t)+V_2(t)=\E\Big[\frac{L_{t,\theta(t)}}{\rho_{\theta(t)}}D_{t,\theta(t)}\dot w\big(t+\theta(t),c(t+\theta(t))+\rho_{\theta(t)}\big)\Big]\eqqcolon \E\Big[U\big(t,\theta(t)\big)\Big].
\]
Once again, we notice that with no loss of generality we can assume that $(t+r,c(t+r)+\rho_r)\in\cR_f$ for all $r\in[\![0,\theta(t)]\!]$ by Remark \ref{rem:P1}. Therefore, there is a constant $C_\cR>0$ independent of $t\in[0,T_1]$, such that
\begin{align}\label{eq:ubound}
\sup_{0\le t\le T_1}\big|D_{t,\theta(t)}\dot w\big(t+\theta(t),c(t+\theta(t))+\rho_{\theta(t)}\big)\big|\le C_\cR,\quad \P-a.s.
\end{align}
Now, fix $T_2<T_1$ and $t\in[0,T_2)$, and consider a sequence $(t_n)_{n\in\N}$ in $[0,T_2]$, with $\lim_{n\to \infty}t_n=t$.
In order to prove $\lim_{n\to \infty}\E[U(t_n,\theta(t_n))]=\E[U(t,\theta(t))]$, it suffices
to show that the sequence $(U(t_n,\theta(t_n)))_{n\in\N}$ is uniformly integrable (indeed bounded in $L^2$) and it converges in probability
to $U(t,\theta(t))$.
 
Due to \eqref{eq:ubound} and $\rho_{\theta(t_n)}\ge \bar c>0$ on $\{\theta(t_n)<T_1-t\}$ (cf.\ \eqref{eq:boundrho}), using H\"older's inequality with $p^{-1}+q^{-1}=1$ and $p\in(1,3/2)$ and letting $C_\cR>0$ change from line to line, we have
 \begin{align*}
\begin{aligned}
\E\Big[\big|U\big(t_n,\theta(t_n)\big)\big|^2\Big]&\le 
C_\cR\E\left[\big(L_{t,\theta(t_n)}\big)^2\left( \1_{\{\theta(t_n)<T_1-t_n\}}\frac{1}{(\rho_{\theta(t_n)})^2}+\1_{\{\theta(t_n)=T_1-t_n\}}\frac{1}{(\rho_{T_1-t_n})^2}\right)\right]\\
   & \le C_\cR\Big(\big\|L_{t,\theta(t_n)}\big\|^2_2+\big\|L_{t,\theta(t_n)}\big\|^2_{2q}\frac{1}{T_1-T_2}\Big\|\frac{1}{\rho_{1}}\Big\|^2_{2p}\Big)\le C_\cR.
\end{aligned}
\end{align*}
The final inequality follows by Lemma \ref{lem:expbd} and \eqref{eq:rhoint}.

For the convergence in probability, let us start by noticing that $\rho_{\theta(t)}>0$ with probability one (because $\rho_{\theta(t)}\geq \bar c$ on $\{\theta(t)<T_1-t\}$
and $\rho_{\theta(t)}=\rho_{T_1-t}$ on $\{\theta(t)\geq T_1-t\}$ and $t<T_1$) and therefore $\lim_{n\to\infty}(1/\rho_{\theta(t_n)})=1/\rho_{\theta(t)}$, $\P$-a.s., from Lemma \ref{lem:contheta}. Combining that with continuity of $\dot w$, of $c(\cdot)$ and of $t\mapsto R_{t,v}(\rho_v)$ we obtain, $\P$-a.s. 
\[
\lim_{n\to\infty}\frac{D_{t_n,\theta(t_n)}}{\rho_{\theta(t_n)}}\dot w\big(t_n+\theta(t_n),c(t_n+\theta(t_n))+\rho_{\theta(t_n)}\big)=\frac{D_{t,\theta(t)}}{\rho_{\theta(t)}}\dot w\big(t+\theta(t),c(t+\theta(t))+\rho_{\theta(t)}\big).
\]
Therefore, the convergence in probability of the sequence $\left(U\big(t_n,\theta(t_n)\big)\right)_{n\in\N}$
reduces to the convergence of $\left(L_{t_n,\theta(t_n)}\right)_{n\in\N}$ to $L_{t,\theta(t)}$. In particular, given 
\[
I_1(t_n,t)=\int_0^{\theta(t_n)}\gamma_{t_n,v}(\rho_v)\ud \rho_v-\int_0^{\theta(t)}\gamma_{t,v}(\rho_v)\ud \rho_v,
\]
and
\[
I_2(t_n,t)=\int_0^{\theta(t_n)}\left(\gamma_{t_n,v}(\rho_v)\right)^2\ud v
      -\int_0^{\theta(t)}\left(\gamma_{t,v}(\rho_v)\right)^2\ud v,
\]
it suffices to prove that $I_1(t_n,t)$ and $I_2(t_n,t)$ converge in probability to zero as $n\to\infty$. This requires some work because $\gamma_{t,v}(\rho_v)$ contains $\dot c(\,\cdot\,)$, which a priori is not continuous (cf.\ \eqref{eq:gammatv}).

We first study $I_1(t_n,t)$. We have
\begin{align*}
\left| I_1(t_n,t)\right|&\leq \left|\int_0^{\theta(t_n)\wedge \theta(t)}
        \left(\gamma_{t_n,v}(\rho_v)-\gamma_{t,v}(\rho_v)\right)\ud \rho_v\right|+
        \left|\int_{\theta(t_n)\wedge \theta(t)}^{\theta(t_n)}
       \gamma_{t_n,v}(\rho_v)\ud \rho_v\right|\\
        &\quad+
       \left|\int_{\theta(t_n)\wedge \theta(t)}^{\theta(t)}
       \gamma_{t,v}(\rho_v)\ud \rho_v\right|
\end{align*}
Using the SDE $\ud\rho_v=\ud\beta^0_v+\rho_v^{-1}\ud v$ (cf.\ \eqref{eq:SDErho}), we have $\left| I_1(t_n,t)\right|\leq
   I_{1,1}(t_n,t)+ I_{1,2}(t_n,t)$, with
\begin{align*}
I_{1,1}(t_n,t)&=
\left|\int_0^{\theta(t_n)\wedge \theta(t)}
        \left(\gamma_{t_n,v}(\rho_v)-\gamma_{t,v}(\rho_v)\right)\ud \beta^0_v\right|+
        \left|\int_{\theta(t_n)\wedge \theta(t)}^{\theta(t_n)}
       \gamma_{t_n,v}(\rho_v)\ud\beta^0_v\right|\\
        &\quad+
       \left|\int_{\theta(t_n)\wedge \theta(t)}^{\theta(t)}
       \gamma_{t,v}(\rho_v)\ud \beta^0_v\right|,
\end{align*}
and
\begin{align*}
I_{1,2}(t_n,t)&=
\int_0^{\theta(t_n)\wedge \theta(t)}
        \left|\gamma_{t_n,v}(\rho_v)-\gamma_{t,v}(\rho_v)\right|\frac{\ud v}{\rho_v}+
        \int_{\theta(t_n)\wedge \theta(t)}^{\theta(t_n)}
      \left| \gamma_{t_n,v}(\rho_v)\right|\frac{\ud v}{\rho_v}\\
        &\quad+
      \int_{\theta(t_n)\wedge \theta(t)}^{\theta(t)}
        \left|\gamma_{t,v}(\rho_v)\right|\frac{\ud v}{\rho_v}.
\end{align*}
Thanks to the boundedness of $\gamma_{t,v}(\rho_v)$, It\^o's isometry and Lemma \ref{lem:contheta}, we have
\begin{align*}
&\lim_{n\to\infty}\E\Big[\Big|\int_{\theta(t_n)\wedge \theta(t)}^{\theta(t_n)}
       \gamma_{t_n,v}(\rho_v)\ud\beta^0_v\Big|^2+
       \Big|\int_{\theta(t_n)\wedge \theta(t)}^{\theta(t)}
       \gamma_{t,v}(\rho_v)\ud \beta^0_v\Big|^2\Big]\le C_\cR\lim_{n\to\infty}\E\big[\big|\theta(t)-\theta(t_n)\big|\big] =0,
\end{align*}
for a constant $C_\cR>0$ independent of $t$ and $t_n$.
Dominated convergence and Lemma \ref{lem:contheta} yield, $\P$-a.s.
\begin{align*}
\lim_{n\to\infty}\Big(\int_{\theta(t_n)\wedge \theta(t)}^{\theta(t_n)}
      \left| \gamma_{t_n,v}(\rho_v)\right|\frac{\ud v}{\rho_v}+
      \int_{\theta(t_n)\wedge \theta(t)}^{\theta(t)}
        \left|\gamma_{t,v}(\rho_v)\right|\frac{\ud v}{\rho_v}\Big)\le \lim_{n\to\infty} C_\cR\int_{\theta(t_n)\wedge \theta(t)}^{\theta(t_n)\vee\theta(t)}\frac{\ud v}{\rho_v}=0.
\end{align*}
Thus
\begin{align}\label{eq:limI1}
\begin{aligned}
&\limsup_{n\to\infty}\E\big[\big|I_1(t_n,t)\big|\big]\\
&\le C \limsup_{n\to\infty}\E\Big[\int_0^{\theta(t_n)\wedge \theta(t)}
        \!\!\left|\gamma_{t_n,v}(\rho_v)\!-\!\gamma_{t,v}(\rho_v)\right|\frac{\ud v}{\rho_v}\!+\!\Big|\int_0^{\theta(t_n)\wedge \theta(t)}
        \!\!\left(\gamma_{t_n,v}(\rho_v)\!-\!\gamma_{t,v}(\rho_v)\right)\ud \beta^0_v\Big|\Big].
\end{aligned}
\end{align}

Now we use the decomposition $\gamma_{t,v}(\xi)=\gamma^0_{t,v}(\xi)-\gamma^1_{t+v}$,
      where
      \[
      \gamma^0_{t,v}(\xi)=\gamma(t+v,c(t+v)+\xi)\quad\mbox{and}\quad \gamma^1_s=\dot c(s).
      \]
      For the stochastic integral, It\^o's isometry yields
\begin{align}\label{eq:stochint}
\E\Big[\Big|\int_0^{\theta(t_n)\wedge \theta(t)}
        \left(\gamma_{t_n,v}(\rho_v)-\gamma_{t,v}(\rho_v)\right)\ud \beta^0_v\Big|^2\Big]=\E\Big[\int_0^{\theta(t_n)\wedge \theta(t)}\left(\gamma_{t_n,v}(\rho_v)-\gamma_{t,v}(\rho_v)\right)^2\ud v\Big].
\end{align}
      Using continuity and boundedness of $t\mapsto \gamma^0_{t, v}(\xi)$ and dominated convergence, we have
      \begin{align}\label{eq:stochint2}
      \lim_{n\to \infty}\E\left[\int_0^{\theta(t_n)\wedge\theta(t)}\left(\gamma^0_{t_n,v}(\rho_v)-\gamma^0_{t,v}(\rho_v)\right)^2
             \ud v\right]=0.
      \end{align}
In order to prove the analogous results with $\gamma^1$ instead of $\gamma^0$ we use $\theta(t)\leq T_1-t$ and set $\dot c_{T_1} (s)=\dot c(s\wedge T_1)$ to get
    \begin{align}\label{eq:stochint3}
        \begin{aligned}
\lim_{n\to\infty}    \int_0^{\theta(t_n)\wedge \theta(t)}\left(\gamma^1_{t_n+v}-\gamma^1_{t+v}\right)^2\ud v
              &\le \lim_{n\to\infty}\int_0^{T_1}\left| \dot c_{T_1}(t_n+v)-\dot c_{T_1}(t+v)\right|^2\ud v\\
              &=\lim_{n\to\infty}\big\|\dot c_{T_1}(t_n+\cdot)-\dot c_{T_1}(t+\cdot)\big\|^2_{L^2(0,T_1)}=0,
\end{aligned}
\end{align}
where the final equality holds because $\dot c$ is bounded on $[0,T_1]$ and translations are continuous in $L^2$. Combining \eqref{eq:stochint}, \eqref{eq:stochint2} and \eqref{eq:stochint3} we have
\[
\lim_{n\to\infty}\E\Big[\Big|\int_0^{\theta(t_n)\wedge \theta(t)}
        \left(\gamma_{t_n,v}(\rho_v)-\gamma_{t,v}(\rho_v)\right)\ud \beta^0_v\Big|^2\Big]=0.
\]     
      
For the first term on the right-hand side of \eqref{eq:limI1}, continuity of $t\mapsto \gamma^0_{t, v}(\xi)$ and dominated convergence give  
\[
\lim_{n\to\infty}\int_0^{\theta(t_n)\wedge \theta(t)}
\left|\gamma^0_{t_n,v}(\rho_v)-\gamma^0_{t,v}(\rho_v)\right|\frac{\ud v}{\rho_v}=0, \quad\P-\text{a.s.}
\]
Instead, H\"older inequality with $p^{-1}+q^{-1}=1$ and $q\in(1,2)$ yields 
\begin{align*}
\E\Big[\int_0^{\theta(t_n)\wedge \theta(t)}
        \left|\gamma^1_{t_n+v}-\gamma^1_{t+v}\right|\frac{\ud v}{\rho_v}\Big]&\leq
\big\|\dot c_{T_1}(t_n+\cdot)-\dot c_{T_1}(t+\cdot)\big\|_{L^p(0,T_1)} \E\Big[\int_0^{T_1}\frac{1}{\rho_v^q}\ud v\Big]^{1/q}\\
&\leq
\big\|\dot c_{T_1}(t_n+\cdot)-\dot c_{T_1}(t+\cdot)\big\|_{L^p(0,T_1)} \Big(\E\Big[\frac{1}{\rho_1^q}\Big]\int_0^{T_1}\frac{1}{v^{q/2}}\ud v\Big)^{1/q}\\
&\le C_\cR \big\|\dot c_{T_1}(t_n+\cdot)-\dot c_{T_1}(t+\cdot)\big\|_{L^p(0,T_1)}.
\end{align*}
Therefore $\lim_{n\to \infty} \E[\int_0^{\theta(t_n)\wedge \theta(t)}|\gamma^1_{t_n+v}-\gamma^1_{t+v}|\frac{\ud v}{\rho_v}]=0$, concluding the proof of $\lim_{n\to \infty}I_1(t_n,t)=0$ in probability (cf.\ \eqref{eq:limI1}).
        
For the convergence of $I_2(t_n,t)$, by boundedness of $\gamma_{t,v}(\rho_v)$ we obtain
\begin{align*}
\left| I_2(t_n,t)\right|&\leq
 \int_0^{\theta(t_n)\wedge \theta(t)}
        \left|\big(\gamma_{t_n,v}(\rho_v)\big)^2-\big(\gamma_{t,v}(\rho_v)\big)^2\right|\ud v
        +C_{\cR}\left| \theta(t_n)- \theta(t)\right|,
\end{align*}
for a constant $C_\cR>0$ independent of $n\in\N$.
From Lemma \ref{lem:contheta}, $\lim_{n\to \infty}\left| \theta(t_n)- \theta(t)\right|=0$, $\P$-a.s.\ Using $|a^2-b^2|\le 2(a\vee b)|a-b|$ and Cauchy-Schwarz inequality we also obtain
\[
\int_0^{\theta(t_n)\wedge \theta(t)}
        \left|\big(\gamma_{t_n,v}(\rho_v)\big)^2-\big(\gamma_{t,v}(\rho_v)\big)^2\right|\ud v\leq C_{\cR}\left(\int_0^{\theta(t_n)\wedge \theta(t)}
        \left(\gamma_{t_n,v}(\rho_v)-\gamma_{t,v}(\rho_v)\right)^2\ud v\right)^{1/2}.
\]
It follows from the proof of the convergence of $I_1(t_n,t)$ that the last integral vanishes in probability as $n\to \infty$.
 This completes the proof of the continuity of $t\mapsto V_1(t)+V_2(t)$.
 
 It remains to prove continuity of the integral term in \eqref{eq:Lambda2}.
    Again we fix $t\in[0,T_2)$ and consider a sequence $(t_n)$ in $[0,T_2]$,
 such that $\lim_{n\to \infty}t_n=t$. Notice that $|V_s(t)|\le C_\cR\|L_{t,s}\|_p\|\rho^{-1}_s\|_q\le C_\cR/\sqrt{s}$ for $p^{-1}+q^{-1}=1$
 (cf.\ Remark \ref{rem:welldef}). Therefore, in order to prove 
 \[
 \lim_{n\to \infty}\int_0^{T_1-t_n}V_s(t_n)\ud s=\int_0^{T_1-t}V_s(t)\ud s,
 \]
 it suffices to prove that, for almost all $s\in(0,T_1-t)$, $\lim_{n\to \infty}V_s(t_n)=V_s(t)$. Note that if $s<T_1-t$,
 then $s<T_1-t_n$ for $n$ large enough. Then the problem reduces to the convergence in probability of $L_{t_n,s\wedge\theta(t_n)}$
 to $L_{t,s\wedge \theta(t)}$, which can be obtained in the same way as we proved the convergence in probability
 of $L_{t_n,\theta(t_n)}$ to $L_{t,\theta(t)}$. We omit further details. 
 \end{proof}

We can finally provide a formal proof of Theorem \ref{thm:main}.
\begin{proof}[{\bf Proof of Theorem \ref{thm:main}}]
Proposition \ref{prop:dotb} gives us a formula for the derivative of the boundary, hence showing differentiability of $b$ at all points $t\in[0,T_1)$. Moreover, Lemma \ref{lem:lambdacont} gives us continuity in $t\in[0,T_1)$ of the expression on the right-hand side of \eqref{eq:bdot}. Thus, $\dot b\in C([0,T_1))$ as claimed.
\end{proof}

\section{From optimal stopping to the Stefan problem}\label{sec:OS-Stefan}

In this section we establish in the generality of our setup that the time-derivative $\dot v$ of the value function of the optimal stopping problem \eqref{eq:v} and the optimal boundary $b$ are a solution pair of the Stefan problem. In a setting with only Brownian motion, Van Moerbeke \cite[Secs.\ 2.4 and 2.5]{van1974optimal.b} made this observation under the assumption that $v_{xxxx}$ be continuous in $\cC$ up to and including the optimal boundary (hence also $\ddot v$ and $\dot v_{xx}$ inherit such continuity); later he elaborated more on those ideas also in \cite{van1976optimal}, building on analogous observations from Schatz \cite{schatz1969free}. It is worth noticing that Van Moerbeke works in a setup for which $\dot b$ remains bounded near the maturity (cf.\ p.\ 122 in the proof of Theorem 1 in \cite{van1974optimal.b}). That is not the case, for example, in the American put option problem.  Also, the assumption on the continuity of $v_{xxxx}$ up to and including the boundary of $\cC$ is difficult to verify in practice.

Let us first state precisely in what sense we intend a solution of the Stefan problem. Our definition is slightly different from those in, e.g., \cite[Ch.\ 8, Sec.\ 1]{friedman2008partial}  or \cite[Ch.\ VIII]{kinderlehrer2000introduction}, in the sense that we allow for more general structure of the linear parabolic operator and the condition at the terminal time is allowed to be a measure. We use $C^\infty_c(\cI)$ to indicate continuous functions with infinitely many continuous derivatives in all variables and compact support in $\cI$.
\begin{definition}
Let $\psi\in C([0,T)\times\cI)$, $\varphi,\eta,\nu\in C([0,T))$ and let $\Sigma$ be a Radon measure on $\cI$. A pair $(p,s)$ is a solution of the Stefan problem with data $(\psi,\varphi,\eta,\nu,\Sigma)$ if $s\in C^1([0,T))$ and, letting $\cO_s\coloneqq\{(t,x)\in[0,T)\times\cI: x>s(t)\}$, the function $p$ is such that $p\in C(\overline \cO_s)\cap C^{1,2}(\cO_s)$ and it solves 
\begin{align}\label{eq:SP}
\begin{aligned}
\dot p(t,x)+(\cL p)(t,x)-r(t,x)p(t,x)&=-\psi(t,x),\qquad\qquad\qquad\quad (t,x)\in\cO_s,\\
p(t,s(t))&=\varphi(t),\qquad\qquad\qquad\qquad\quad t\in[0,T),\\
\dot s(t)&= -\eta(t)p_x(t,s(t))+\nu(t),\:\:\quad t\in[0,T),\\
\lim_{t\to T}\int_{s(t)}^\infty p(t,z)\xi(z)\ud z&=\int_{[s(T),\infty)} \xi(z)\Sigma(\ud z),
\end{aligned}
\end{align} 
for any $\xi\in C^\infty_c(\cI)$. Existence of the derivative $p_x$ at points $(t,s(t))$ and continuity of the mapping $t\mapsto p_x(t,s(t))$, $t\in[0,T)$, are part of the definition of solution.
\end{definition}

The next theorem establishes a link between optimal stopping and Stefan problem. It assumes that the value function of the optimal stopping problem is continuously differentiable in the whole space, which holds in a broad class of optimal stopping problems as illustrated in \cite{de2020global}, even if $g$ is not smooth. In the statement below we take $g(T,\cdot)$ to be the difference of two convex functions so that 
$\Sigma(\ud z)\coloneqq(\cL g)(T,\ud z)-r(T,z)g(T,z)\ud z$ is a signed measure.
\begin{theorem}\label{thm:stefan}
Let Assumption \ref{ass:main} hold and further assume that:
\begin{itemize}
\item[(i)] $\sigma\in C^2(\cI)$, $\mu\in C^1([0,T]\times\cI)$ and $r\in C^{0,1}([0,T]\times\cI)$; 
\item[(ii)] $x\mapsto g(T,x)$ is the difference of two convex functions; 
\item[(iii)] $v\in C([0,T]\times\cI)\cap C^1([0,T)\times\cI)$; 
\item[(iv)] $t\mapsto b(t)$ is locally Lipschitz on $[0,T)$.
\end{itemize}
Then, the pair $(\dot v,b)$ is solution of the Stefan problem with data:
\begin{align*}
\psi(t,x)&=\dot \mu(t,x)v_x(t,x)-\dot r(t,x)v(t,x),\quad\varphi(t)=\dot g(t,b(t)),\\
\eta(t)&=-\frac{\sigma^2(b(t))}{2h(t,b(t))},\quad\nu(t)=-\frac{\sigma^2(b(t))\dot g_x(t,b(t))}{2h(t,b(t))},\\
\Sigma(\ud z)&=(\cL g)(T,\ud z)-r(T,z)g(T,z)\ud z,
\end{align*}
where we recall $h(t,b(t))=\dot g(t,b(t))+(\cL g)(t,b(t))-r(t,b(t)) g(t,b(t))$ from \eqref{eq:h<0}.
\end{theorem}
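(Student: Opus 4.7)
The plan is to verify that $(\dot v,b)$ satisfies each of the five requirements entering the definition of a solution of the Stefan problem with the data listed. A good portion of the work has already been done: existence of $\dot v_x(t,b(t))$ for $t\in[0,T_1)$ is encoded in Theorem \ref{thm:expansion}, its continuity in Lemma \ref{lem:lambdacont}, and differentiability of $b$ in Proposition \ref{prop:dotb}. What remains is to reorganise these ingredients and, crucially, to analyse the terminal time.

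I would first deal with regularity and the interior PDE. On $\cC=\cO_b$ the function $v$ solves the HJB equation $\dot v+\cL v-rv=0$ in the classical sense (a standard consequence of the martingale property of $v$ inside $\cC$), and $\dot v\in C^{1,2}(\cC)$ by interior Schauder theory (cf.\ \cite[Thm.\ 3.5.11]{friedman2008partial}, already invoked around \eqref{eq:udot}). Differentiating this identity in $t$ and using $\partial_t(\cL v)=\cL(\dot v)+\dot\mu\,v_x$, which is legitimate under hypothesis (i), yields $\ddot v+\cL\dot v-r\dot v=-(\dot\mu\,v_x-\dot r\,v)=-\psi$ on $\cO_b$. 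Hypothesis (iii) gives $\dot v\in C([0,T)\times\cI)$, so in particular $\dot v\in C(\overline{\cO_b})$. The equalities $v\equiv g$ and $\dot v\equiv\dot g$ on the interior of $\cS$, combined with continuity of $\dot v$ and $\dot g$ across the boundary, deliver $\dot v(t,b(t))=\dot g(t,b(t))=\varphi(t)$.

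Next, for the Stefan condition, I would rearrange Proposition \ref{prop:dotb}. Inverting the Lamperti change of variable via the Taylor expansion $f(b(t)+\delta)-f(b(t))=\delta/\sigma(b(t))+O(\delta^2)$, the asymptotics of Theorem \ref{thm:expansion} deliver both the existence of the one-sided spatial derivative $\dot u_x(t,b(t))=\Lambda_{T_1}(t)/\sigma(b(t))$ and hence of $\dot v_x(t,b(t))=\dot g_x(t,b(t))+\Lambda_{T_1}(t)/\sigma(b(t))$. Continuity of this map on $[0,T_1)$ is Lemma \ref{lem:lambdacont} combined with the continuity of $\dot g_x$. A purely algebraic rearrangement of the identity $\dot b(t)=\sigma(b(t))\Lambda_{T_1}(t)/(2h(t,b(t)))$ from Proposition \ref{prop:dotb}, using the definitions $\eta(t)=-\sigma^2(b(t))/(2h(t,b(t)))$ and $\nu(t)=\eta(t)\dot g_x(t,b(t))$, then produces $\dot b(t)=-\eta(t)\dot v_x(t,b(t))+\nu(t)$.

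I expect the terminal condition to be the main obstacle. The natural route is to fix $\xi\in C^\infty_c(\cI)$, substitute $\dot v=-\cL v+rv$ on $\cO_b$, and integrate by parts twice against $\xi$, producing a bulk term of the form $\int_{b(t)}^\infty v(t,z)\bigl[r(t,z)\xi(z)-\cL^*\xi(z)\bigr]\ud z$, where $\cL^*\xi=\tfrac12(\sigma^2\xi)_{xx}-(\mu\xi)_x$, together with explicit boundary contributions at $z=b(t)$ involving $v(t,b(t))=g(t,b(t))$ and $v_x(t,b(t))=g_x(t,b(t))$ via smooth fit. Passing to the limit $t\uparrow T$ in the bulk is easy by dominated convergence since $v(t,\cdot)\to g(T,\cdot)$ locally uniformly, and the resulting expression is to be identified with the pairing of $\xi$ against the signed Radon measure $\Sigma$, which is meaningful precisely because hypothesis (ii) makes $g_{xx}(T,\cdot)$ a signed measure. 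The delicate step is controlling the limiting boundary contributions at the moving free boundary $b(t)\to b(T)$: since Lipschitz regularity of $b$ need not persist up to $T$ (as it fails already for the classical American put and call), one must rely only on the continuity of $b$ at $T$ together with the smooth-fit identities and compactness of $\mathrm{supp}(\xi)$ to localise the argument and show that those boundary terms reassemble, in the limit, into the contribution of $\Sigma$ charging $\{b(T)\}$, producing the claimed identity.
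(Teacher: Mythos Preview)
Your proposal is correct and follows essentially the same route as the paper. Two minor points of comparison are worth noting. First, for the Stefan condition the paper re-invokes the level-set approximation $b_\delta$ from Proposition \ref{prop:dotb}, writing $\dot b(t)=-\lim_{\delta\to 0}(\dot v-\dot g)(t,b_\delta(t))/(v_x-g_x)(t,b_\delta(t))$ and expanding numerator and denominator separately via Theorem \ref{thm:expansion} and \eqref{eq:vxx}; your direct extraction of $\dot u_x(t,b(t))=\Lambda_{T_1}(t)/\sigma(b(t))$ from the Lamperti inversion followed by algebra is a cleaner shortcut to the same identity. Second, your concern about the boundary contributions at $b(t)$ in the terminal condition is slightly overstated: once smooth fit lets you write $\kappa(t)$ purely in terms of $g(t,b(t))$ and $g_x(t,b(t))$, continuity of $b$ and of these quantities up to $T$ (from Assumption \ref{ass:main}) gives $\kappa(t)\to\kappa(T)$ directly, and the paper then simply ``undoes'' the integration by parts against $g(T,\cdot)$ in the distributional sense, which automatically absorbs $\kappa(T)$ into the pairing with $\Sigma$ on $[b(T),\infty)$---no separate analysis of a point mass at $b(T)$ is needed.
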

\begin{proof}
By continuity of the value function $v$ in $[0,T]\times\cI$ it follows by standard arguments (cf.\ \cite[Thm.\ 2.7.7]{karatzas1998methods} and \cite[Prop.\ 2.6]{jacka1991optimal} for the American put problem and \cite[Ch.\ III.7]{peskir2006optimal} for a general overview of the method) that
\begin{align}\label{eq:PDEv}
\begin{aligned}
\dot v(t,x)+(\cL v)(t,x)-r(t,x)v(t,x)=0,\qquad (t,x)\in\cC.
\end{aligned}
\end{align}
Using \eqref{eq:PDEv} we can easily derive the terminal condition for the Stefan problem by first noticing that for any $\xi\in C^\infty_c(\cI)$
\begin{align*}
\begin{aligned}
\int_{b(t)}^\infty \dot v(t,z)\xi(z)\ud z&=-\int_{b(t)}^\infty\big((\cL v)(t,z)-r(t,z) v(t,z)\big)\xi(t,z)\ud z\\
&=\kappa(t)-\int_{b(t)}^\infty v(t,z)\big((\cL^* \xi)(t,z)-r(t,z) \xi(z)\big)\ud z, 
\end{aligned}
\end{align*}
where 
\[
\kappa(t)\coloneqq \frac{\sigma^2\big(b(t)\big)}{2}g_x\big(t,b(t)\big)\xi\big(b(t)\big)-g\big(t,b(t)\big)\frac{\partial }{\partial x}\Big(\frac{\sigma^2(\cdot)}{2}\xi(\cdot)\Big)\big(b(t)\big)+\mu\big(t,b(t)\big)g\big(t,b(t)\big)\xi\big(b(t)\big)
\]
and $\cL^*$ is the adjoint operator of $\cL$. In deriving the expression for $\kappa(t)$ we used the boundary conditions $v(t,b(t))=g(t,b(t))$ and $v_x(t,b(t))=g_x(t,b(t))$. Letting $t\to T$, using $\lim_{t\to T}v(t,x)=g(T,x)$ and dominated convergence, and then undoing integration by parts, yields
\begin{align*}
\begin{aligned}
\lim_{t\to T}\int_{b(t)}^\infty \dot v(t,z)\xi(z)\ud z&=\kappa(T)-\int_{b(T)}^\infty g(T,z)\big((\cL^* \xi)(T,z)-r(T,z) \xi(z)\big)\ud z\\
&=\int_{[b(T),\infty)}\xi(z)\Sigma(\ud z),
\end{aligned}
\end{align*}
where we recall that $\Sigma(\ud z)$ is a signed measure.

Continuity $\dot v$ yields the boundary condition $\dot v(t,b(t))=\dot g(t,b(t))$ and therefore $\varphi(t)=\dot g(t,b(t))$. Plugging the boundary conditions $v(t,b(t))=g(t,b(t))$, $\dot v(t,b(t))=\dot g(t,b(t))$ and $v_x(t,b(t))=g_x(t,b(t))$ for $t\in[0,T)$ into \eqref{eq:PDEv} we obtain
\begin{align}\label{eq:vxx}
\begin{aligned}
v_{xx}\big(t,b(t)\big)
=g_{xx}\big(t,b(t)\big)-\frac{2}{\sigma^2(b(t))}h\big(t,b(t)\big),\quad t\in[0,T),
\end{aligned}
\end{align}
where we recall the function $h$ from \eqref{eq:h<0}. This allows us to obtain the third condition in the Stefan problem \eqref{eq:SP}, arguing as in the first part of the proof of Proposition \ref{prop:dotb}. In particular, from \eqref{eq:dotbd} and knowing that $b\in C^1([0,T))$ (Theorem \ref{thm:main}) we have
\begin{align}\label{eq:bdelta}
\begin{aligned}
\dot b(t)=\lim_{\delta \to 0}\dot b_\delta(t)=-\lim_{\delta\to 0}\frac{(\dot v-\dot g)(t,b_\delta(t))}{(v_x-g_x)(t,b_\delta(t))},
\end{aligned}
\end{align}
for any $t\in[0,T)$. Now we notice that \eqref{eq:expansion} guarantees the existence of the mixed derivative $(\dot v_x-\dot g_x)(t,b(t))$ and it also shows
\[
(\dot v-\dot g)(t,b_\delta(t))=(b_\delta(t)-b(t))(\dot v_x-\dot g_x)(t,b(t))+o(b_\delta(t)-b(t)).
\]
Similarly, \eqref{eq:vxx} implies
\[
(v_x-g_x)(t,b_\delta(t))=-(b_\delta(t)-b(t))\frac{2h(t,b(t))}{\sigma^2(b(t))}+o(b_\delta(t)-b(t)).
\]
Notice that $h(t,b(t))<0$ for $t\in[0,T)$. Substituting into the right-hand side of \eqref{eq:bdelta} yields  
\begin{align*}
\dot b(t)=\frac{\sigma^2(b(t))(\dot v_x-\dot g_x)(t,b(t))}{2 h(t,b(t))}=-\eta(t)\dot v_x(t,b(t))+\nu(t),
\end{align*}
with $\eta(t)=-\sigma^2(b(t))/2h(t,b(t))$ and $\nu(t)=-\sigma^2(b(t))(\dot g_x/2h)(t,b(t))$. It is clear from the assumed regularity of $g$ that both $\eta$ and $\nu$ are continuous on $[0,T)$. Instead, continuity of $t\mapsto \dot v_x(t,b(t))$ on $[0,T)$ is by Lemma \ref{lem:lambdacont} (or equivalently by continuity of $\dot b$).
  
It remains to verify that $\dot v$ solves the PDE in the first line of \eqref{eq:SP}. Thanks to the regularity of the coefficients, we can differentiate \eqref{eq:PDEv} with respect to time (cf.\ \cite[Thm.\ 3.5.11]{friedman2008partial}) and obtain: 
\begin{align*}
\begin{aligned}
\ddot v(t,x)+(\cL \dot v)(t,x)-r(t,x)\dot v(t,x)= -\dot \mu(t,x)v_x(t,x)+\dot r(t,x)v(t,x),\qquad (t,x)\in\cC.
\end{aligned}
\end{align*}
Thus, setting $\psi(t,x)=\dot \mu(t,x)v_x(t,x)-\dot r(t,x)v(t,x)$ and $\cO_b=\cC$ we have concluded our proof.
\end{proof}

\subsection{The Stefan problem for the American call and put options}
We can apply the theorem above to establish the precise link between the American call and American put problem and the Stefan problem. This appears to be missing from the literature on the American option problem. Starting with the American put problem, we take $g(x)=(K-x)^+$, $r(t,x)=r\ge 0$ and specify the dynamics 
\begin{align}\label{eq:XAm}
\ud X_t=(r-\delta)X_t\ud t+\sigma X_t\ud B_t,
\end{align}
with $\delta\ge 0$ and $\sigma>0$. Notice that because $b(t)>0$ for all $t\in[0,T]$, then the diffusion is non-degenerate locally around the boundary, thus Assumption \ref{ass:main}-(iv) holds. The state space of the problem is $[0,T]\times(0,\infty)$. Then $\Sigma(\ud x)=\1_{(0,K)}(x)(\delta x-rK)\ud x+\sigma^2K^2/2\cdot\delta_K(\ud x)$, where $\delta_K(\ud x)$ denotes the Dirac's delta in $K$. It is well-known that the optimal exercise boundary is a continuous non-decreasing function $t\mapsto b(t)$ and two cases may arise depending on the relative size of the dividend rate $\delta$ and the risk-free rate $r$. If $r\ge \delta$, then $b(T)=K$. Instead, for $r<\delta$ it holds $b(T)=(r/\delta)K$. This implies that the terminal conditions in the Stefan problems are different in the two cases: when $r\ge \delta$ we have formally $\dot v(T,\ud x)=\sigma^2K^2/2\cdot\delta_K(\ud x)$ for $x\in[K,\infty)$, i.e., for $\xi\in C^\infty_c((0,\infty))$
\begin{align}\label{eq:terminal1}
\lim_{t\to T}\int_{b(t)}^\infty \dot v(t,z)\xi(z)\ud z=\frac{\sigma^2 K^2}{2}\xi(K),
\end{align}
whereas for $r<\delta$ we have $\dot v(T,\ud x)=\1_{((r/\delta)K,K)}(x)(\delta x-rK)\ud x+\sigma^2 K^2/2\cdot\delta_K(\ud x)$ for $x\in[(r/\delta)K,\infty)$, i.e., 
\begin{align}\label{eq:terminal2}
\lim_{t\to T}\int_{b(t)}^\infty \dot v(t,z)\xi(z)\ud z=\int_{(r/\delta)K}^K (\delta z-rK)\xi(z)\ud z + \frac{\sigma^2 K^2}{2}\xi(K).
\end{align}

The function $h$ evaluated along the boundary reads
$h(t,b(t))=\delta b(t)-rK<0$, for $t\in[0,T)$, from which we deduce
\[
\eta(t)=-\frac{\sigma^2 b^2(t)}{2(\delta b(t)-rK)}\quad\text{and}\quad \nu(t)=0,\quad \text{for $t\in[0,T)$}. 
\]
The function $\psi$ is easily verified to be $\psi(t,x)=0$. It is also well-known that $v$ is continuous on $[0,T]\times(0,\infty)$ and it is shown in \cite[Examples 12 and 17]{de2020global} that both $\dot v$ and $v_x$ are continuous on $[0,T)\times(0,\infty)$ (cf.\ also \cite[Thm.\ 3.5 and Lemma 5.4]{de2022stopping}). Therefore, $\varphi(t)=\dot v(t,b(t))=0$. Finally, the results of Section \ref{sec:lip} guarantee that $b$ is locally Lipschitz on $[0,T)$. Thus, using Theorem \ref{thm:stefan} we obtain that the pair $(\dot v,b)$ solves the following Stefan problem: 
\begin{align*}
\begin{aligned}
\ddot v(t,x)+\frac{\sigma^2x^2}{2} \dot v_{xx}(t,x)+(r-\delta)x \dot v_{x}(t,x)-r\dot v(t,x)&=0,\qquad\qquad\qquad\qquad\quad (t,x)\in\cC,\\
\dot v(t,b(t))&=0,\qquad\qquad\qquad\qquad\quad\: t\in[0,T),\\
\dot b(t)&= \frac{\sigma^2 b^2(t)}{2}\frac{\dot v_x(t,b(t))}{\delta b(t)-rK},\quad\:\: t\in[0,T),
\end{aligned}
\end{align*} 
with $\cO_b=\cC=\{(t,x)\in[0,T)\times(0,\infty) : x> b(t)\}$ and with terminal condition given by either \eqref{eq:terminal1} or \eqref{eq:terminal2}, depending on whether $r\ge \delta $ or $r<\delta$.

The arguments for the case of the American call option are very similar and we only sketch them here for brevity. The stopping payoff reads $g(x)=(x-K)^+$ and the discount rate is $r\ge 0$. The underlying dynamics is the same as in \eqref{eq:XAm} but with $\delta>0$, in order to avoid the trivial situation when $v(t,x)>g(x)$ for all $(t,x)\in[0,T)\times(0,\infty)$. In this setup the continuation set is bounded from above by the optimal boundary $t\mapsto b(t)$ (cf.\ Remark \ref{rem:loc}), which is continuous and non-increasing with $b(T)=K$ if $r\le\delta$ and $b(T)=(r/\delta)K$ if $r>\delta$. The value function is continuous on $[0,T]\times (0,\infty)$ and continuously differentiable on $[0,T)\times (0,\infty)$.
Since
$\Sigma(\ud x)=(rK-\delta x)\1_{(K,\infty)}(x)\ud x+\sigma^2 K^2/2\cdot\delta_K(\ud x)$, then for $\xi\in C^\infty((0,\infty))$ the terminal conditions 
\begin{align*}
\lim_{t\to T}\int^{b(t)}_0 \dot v(t,z)\xi(z)\ud z&=\frac{\sigma^2 K^2}{2}\xi(K),\\
\lim_{t\to T}\int^{b(t)}_0 \dot v(t,z)\xi(z)\ud z&=\int^{(r/\delta)K}_0 (rK-\delta z)\xi(z)\ud z + \frac{\sigma^2 K^2}{2}\xi(K).
\end{align*}
hold for $r\le\delta$ and $r>\delta$, respectively. Also in this case $\psi(t,x)=0$ and along the boundary we have $\dot v(t,b(t))=0=\varphi(t)$ and  $h(t,b(t))=rK-\delta b(t)<0$ for $t\in[0,T)$. We deduce $\nu(t)=0$ and $\eta(t)=-\sigma^2 b^2(t)[2(rK-\delta b(t))]^{-1}$. Then $(\dot v,b)$ solves the Stefan problem with  data $(\psi,\varphi,\eta,\nu,\Sigma)$ specified above and $\cO_b=\{(t,x)\in[0,T)\times(0,\infty) : 0< x < b(t)\}$. 

\medskip
\noindent{\bf Acknowledgment}: T.\ De Angelis received partial financial support from EU -- Next Generation EU -- PRIN2022 (2022BEMMLZ) and PRIN-PNRR2022 (P20224TM7Z). 

\bibliography{bibliography}{}
\bibliographystyle{abbrv}

\end{document}